\setlist{nosep}
\newcommand{\Z}{\mathbb{Z}}
\newcommand{\R}{\mathbb{R}}
\newcommand{\bbC}{\mathbb{C}}
\newcommand{\dd}{\mathrm{d}}
\newcommand{\rmi}{\mathrm{i}}
\newcommand{\Fcal}{\mathcal{F}}
\newcommand{\Ncal}{\mathcal{N}}
\newcommand{\Ecal}{\mathcal{E}}
\newcommand{\calI}{\mathcal{I}}
\newcommand{\sfone}{\mathsf{1}}
\newcommand{\norm}[1]{\|#1\|}
\newcommand{\sep}{\mathrm{sep}}
\newcommand{\refl}{\mathrm{rfl}}
\newcommand{\RV}{\mathrm{RV}}
\newcommand{\IV}{\mathrm{IV}}
\newcommand{\comb}{\mathsf{Comb}}
\newcommand{\TrigoPolySet}{\mathrm{TPoly}}
\newcommand{\ChargeDensitySet}{\mathrm{CDens}}
\newcommand{\DensCenter}{\mathrm{ctr}}
\newcommand{\DensSquareEnv}{\mathrm{D}}
\newcommand{\SquareSet}{\mathrm{Sq}}
\newcommand{\SquareCover}{\mathcal{S}}
\newcommand{\scale}{\mathrm{sc}}
\DeclareMathOperator{\dist}{dist}
\DeclareMathOperator{\supp}{supp}
\DeclareMathOperator{\diam}{diam}
\DeclareMathOperator{\sinc}{sinc}
\newcommand{\vast}{\bBigg@{4}}
\newcommand{\Vast}{\bBigg@{5}}
\numberwithin{equation}{section}
\numberwithin{figure}{section}
\renewcommand{\notin}{\not\in}
\newtheorem{theorem}{Theorem}[section]
\newtheorem{corollary}[theorem]{Corollary}
\newtheorem{lemma}[theorem]{Lemma}
\theoremstyle{remark}
\newtheorem{remark}[theorem]{Remark}
\theoremstyle{definition}
\newtheorem{claim}{Claim}
\newcommand{\restrict}{\!\!\restriction\!}
\begin{document}

\title{Quantitative delocalization for solid-on-solid models at high temperature and arbitrary tilt}
\author{Sébastien Ott\thanks{Institute of Mathematics, EPFL, 1015 Lausanne, Switzerland, \texttt{ott.sebast@gmail.com}} \and Florian Schweiger\thanks{Section de mathématiques, Université de Genève, 7-9 rue du Conseil Général, 1205 Genève, Switzerland,  \texttt{florian.schweiger@unige.ch}}}
\date{September 4, 2025}
\maketitle

\begin{abstract}
We study a family of integer-valued random interface models on the two-dimensional square lattice that include the solid-on-solid model and more generally $p$-SOS models for $0<p\le2$, and prove that at sufficiently high temperature the interface is delocalized logarithmically uniformly in the boundary data. Fröhlich and Spencer had studied the analogous problem with free boundary data, and our proof is based on their multiscale argument, with various technical improvements.
\end{abstract}

\section{Introduction and results}

\subsection{Effective interface models and roughening transitions}
Consider a model of lattice statistical mechanics for which there are at least two Gibbs states (phases). When one forces these phases to coexist, for example by imposing suitable boundary conditions, a natural object of interest is the interface between them. If the original model is defined on a $d+1$-dimensional lattice, the resulting interface will be a $d$-dimensional object. 

Typically, the behavior of the interface is most intricate in $d=2$. Namely a possible candidate for a scaling limit of the interface is the (continuum) Gaussian free field, and this object has critical dimension $d=2$, in the sense that it is delocalized there, but only at logarithmic rates. So it is natural to wonder if the underlying discrete interface is delocalized as well, or if it stays localized. As higher temperatures lead to larger fluctuations, the answer to this question could depend on the temperature of the model, and one says that the model has a roughening transition if there is a non-trival transition between localized and delocalized behavior of the interface as the temperature increases.

Unfortunately, for models of interests in statistical mechanics, while it is often feasible to establish the existence of a localized regime at very low temperature, it is usually very hard to prove existence of a delocalized regime (and hence also a roughening transition). For example, when one considers the Ising model on a $2+1$-dimensional box with plus boundary conditions on the top half and minus boundary conditions on the bottom half (the so-called Dobrushin boundary conditions), then at any temperature below the critical temperature $T_c$ there will be an interface between the plus and minus phases. While it was predicted that at zero tilt there is a roughening transition at $T_r\approx 0.57T_c$ \cite{WGL75}, the only rigorous results concern the existence of a localized regime \cite{D72,vB75}, and it is a longstanding open problem to verify the existence of a roughening transition.

Because of this, it is common in the literature to make a simplifying assumption, namely to assume from the beginning that the interface is given by the graph of a function $\phi:\Z^d\to\Z$. This leads to so-called (integer-valued) effective interface models. For instance, if one conditions the Dobrushin interface to be given by the graph of a function, one obtains the so-called solid-on-solid (SOS) model.

For such effective interface models, one can again study the existence of roughening transitions. Once again, it is known in wide generality that zero-tilt interfaces are localized at low temperature \cite{BW1982}. The converse question of (logarithmic) delocalization for $d=2$ at high temperature is harder, but was answered positively for several important models including the Gaussian free field (GFF) and SOS model in \cite{FS81a}, and there has been recent progress for some other models such as uniform Lipschitz functions and the height function associated to six-vertex models \cite{CPST21,GM21,DKMO24,GL25}.

\subsection{\texorpdfstring{\(p\)-SOS models}{p-SOS models}}
\label{subsec:p_SOS}
The main focus of this work is on the so-called $p$-SOS interface models. These are a family of effective, real- or integer-valued gradient interface models with interaction term $|\cdot|^p$. Notable special cases are the GFF for $p=2$ and the usual SOS model for $p=1$. 

Given a finite domain $\Lambda\subset\Z^d$ and boundary values $\xi\in\R^{\Z^d}$, one can consider the Hamiltonian
\[H_\xi(\phi)=\sum_{\{i,j\} \subset \Lambda, i\sim j} |\phi_i-\phi_j|^p +\sum_{i\in \Lambda,j\in \Lambda^c, i\sim j }|\phi_i-\xi_j|^p\]
defined on functions $\phi\in\R^\Lambda$.

Then the real-valued $p$-SOS model at inverse temperature $\beta>0$ is given by
\[d \mu_{\Lambda;\beta}^{p-\mathrm{SOS},\xi,\RV}(\phi) \propto e^{-\beta H_\xi(\phi)} d\phi.\]
Its integer-valued version is (formally) equal to the real-valued model conditioned to take integer values in $\Lambda$. Rigorously, it can be defined via
\[d \mu_{\Lambda;\beta}^{p-\mathrm{SOS},\xi,\IV}(\phi) \propto e^{-\beta H_\xi(\phi)} \prod_{i\in \Lambda}\sum_{n\in \Z}d\delta_n(\phi_i).\]
Even more generally one can require not that $\phi_i\in\Z$ but that $\phi_i-\zeta_i\in\Z$ where $\zeta_i\in\R^\Lambda$ is some given field, i.e.~that the constraint of being integer-valued is shifted along the fibers. The corresponding measure is then given by
\[d \mu_{\Lambda;\beta}^{p-\mathrm{SOS},\xi,\zeta,\IV}(\phi) \propto e^{-\beta H_\xi(\phi)} \prod_{i\in \Lambda}\sum_{n\in \Z}d\delta_n(\phi_i-\zeta_i).\]

Our main interest in these models is in the critical dimension $d=2$. In that case, the real-valued model is delocalized at any temperature, but only barely so in the sense that the variance on a finite box of sidelength $N$ diverges logarithmically in $N$. See for example~\cite{DS1980,FP1981} for classical results with some sub-optimal speed and heavy restrictions on the pair interaction potential, and~\cite{ISV2002, MP2015} for more recent results with optimal speed and relaxed hypotheses on the potential. As mentioned in the previous subsection, one can wonder if this delocalization survives at high temperature, and if there is a roughening transition.

It is believed that the answer to this question depends on the tilt of the model. Let us begin by considering the zero-tilt case, corresponding to the choice $\xi=0$. In that case it follows from a standard low temperature expansion that for large $\beta$ the model is localized in the sense that the variance of the field is bounded uniformly in $N$ (as shown for much more general height functions in \cite{BW1982}). For small $\beta$, on the other hand, it is believed that the model is delocalized with logarithmically diverging variances. This was shown rigorously for the case of the GFF and the SOS model (i.e.~for $p=2$ and $p=1$) in a very influential work by Fröhlich and Spencer \cite{FS81a}\footnote{The proof for the SOS case in \cite{FS81a} has some issues, though, as the proof in \cite[Section 7.2]{FS81a} does not actually give the estimate in \cite[Theorem 7.3]{FS81a} (cf. Remark~\ref{r:techimprovement} below for details). Because of this, the results in \cite{FS81a} do not directly imply logarithmic delocalization of variances for the SOS model. As a by-product of our main results, we fill this gap.}.

In recent years, there have been exciting further developments. Namely using an elegant percolation argument from \cite{L22}, in \cite{AHPS21} and independently in \cite{vEL23,vEL23b} alternative proofs of delocalization for various interfaces (including the GFF) at high temperature and zero tilt were given. These results are qualitative, but together with a dichotomy result from \cite{L23} they imply sharpness of the roughening transition and logarithmic delocalization throughout the delocalized phase. Moreover, in the case of the GFF at very high temperature much more than just delocalization is known: In \cite{BPR22a,BPR22b} renormalization-group methods were used to prove that the scaling limit of the (infinite volume, zero-tilt) interface is the continuum GFF.

The case of non-trivial tilt $u$, given by $\xi_i=u\cdot i$ for $i\in\Z^d$, where $u\in\R^d$ with $u\neq0$, is quite different. Namely it is expected that the field now is delocalized even if $\beta$ is large. The reason for this is that the tilted boundary conditions destroy the uniqueness of the ground state (i.e.~of the minimizers of $H_\xi$), and the resulting fluctuations of the ground state already manage to delocalize the field. However, for large $\beta$ the only relevant rigorous result is the recent \cite{LL24}, where for certain perturbations of the SOS-model (but not the SOS-model itself) convergence in a scaling limit to the continuum GFF (and hence in particular logarithmic delocalization) is shown. For small $\beta$, on the other hand, in the case of the SOS model, Lammers and the first author \cite{LO24} have shown qualitative delocalization (in the sense that there is no infinite-volume Gibbs state). This result does not apply in finite volume, though, and does not give any quantitative lower bound on the variances\footnote{The dichotomy statement of \cite{L23} does not apply in the case of non-zero tilt because it is based on relating the height function to certain correlated percolation models. For that purpose it is essential that the law of the interface is invariant under $\phi\mapsto-\phi$.}. Logarithmic delocalization in finite volume was previously known only for the case of the GFF, due to work by Garban and Sepúlveda \cite{GS24}\footnote{The relevant result \cite[Theorem 1.8]{GS24} is stated there for free or Dirichlet boundary data, but in the Gaussian case one can reduce general boundary data to Dirichlet boundary data by a straightforward change of variables.}.

Let us also mention that the one-dimensional $p$-SOS model (but possibly even with long-range interactions) has recently been studied in \cite{CDL24}, and they prove upper and lower bounds on the variance. Their proofs rely on rewriting $p$-SOS models as mixtures of GFFs with random conductances, and bounding the fluctuations of the latter. This is managable in $d=1$, but much harder in $d=2$ where one is confronted with roughening transitions for the random-conductance GFFs.

\subsection{Main results}

As explained above, all existing results for delocalization of integer-valued $p$-SOS models at high temperature are restricted to the zero-tilt case, apply only to the GFF ($p=2$) or are qualitative. In particular, even for the key example of the usual SOS model at any non-zero tilt, it was only known that the interface delocalizes qualitatively \cite{LO24}.

Our main result is concerned with all $p$-SOS models for $0<p\le2$ (including in particular the usual SOS model) in $d=2$ and we prove logarithmic delocalization at high temperature, uniformly in the boundary data and the fiber shifts.

\begin{theorem}
    \label{thm:main:p_SOS}
    Suppose that $d=2$. Then there is \(\beta_0>0\) such that for any for any $0<p\le 2$, any \(\beta < \beta_0\), any $\Lambda\subset\Z^2$ finite, any \(\xi,\zeta \in \R^{\Z^2}\) and any \(f\in \R^{\Lambda}\) we have
    \begin{equation}\label{eq:pSOSvar}
        \mu_{\Lambda;\beta}^{p-\mathrm{SOS}; \xi,\zeta} \big( f\cdot \phi\, ;\, f\cdot \phi \big)
        \geq  \frac{1}{\beta_{\rm eff}} f \cdot \Delta_{\Lambda}^{-1} f,
    \end{equation}
    for some \(\beta_{\rm eff}>0\). In fact we can take \(\beta_{\rm eff} \le \beta^{c} \) for some universal constant $c>0$.
    
    Moreover, if \(p\in (1,2]\), $\Lambda=-\Lambda$ and
    \begin{equation*}
        f_i = f_{-i},\quad \xi_{i} = -\xi_{-i},\quad \zeta_i=-\zeta_{-i},
    \end{equation*}one has
    \begin{equation}\label{eq:pSOSexpm}
        \mu_{\Lambda;\beta}^{p-\mathrm{SOS}; \xi,\zeta} \big( e^{f\cdot \phi} \big)
        \geq
        \exp\left(\frac{1}{2\beta_{\rm eff}}f \cdot \Delta_{\Lambda}^{-1} f\right)
    \end{equation}for \(\beta <\beta_0\). Finally, \eqref{eq:pSOSexpm} also holds for \(p=1\) provided $f$ is small enough in the sense that \(\|\nabla(\Delta_\Lambda^{-1}f)\|_\infty<\beta\).
\end{theorem}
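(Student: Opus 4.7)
The plan is to follow the Fröhlich--Spencer multi-scale (Coulomb-gas) strategy from \cite{FS81a}, upgraded in three directions: covering general $p\in(0,2]$, handling arbitrary tilt $\xi$ and fiber shifts $\zeta$, and extracting quantitative lower bounds on variances and exponential moments rather than qualitative delocalization. The first step is to reduce to a Gaussian-type problem. For $0<p\le 2$, the map $x\mapsto e^{-\beta|x|^p}$ is a positive mixture of Gaussians $e^{-sx^2}$ (by Bernstein, since $r\mapsto e^{-\beta r^{p/2}}$ is completely monotone in $r=x^2$). Applying this edge by edge realises $\mu_{\Lambda;\beta}^{p\text{-SOS};\xi,\zeta,\IV}$ as a mixture over random edge conductances of an integer-valued discrete GFF with those conductances, tilt $\xi$, and fiber shifts $\zeta$. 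It therefore suffices to prove the target inequalities, with constants uniform over a good event for the conductances, and then average; the exponent $cp$ in $\beta_{\rm eff}\le\beta^{cp}$ should emerge from the tail behaviour of the mixing measure.

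Second, I would apply Poisson summation $\sum_{n\in\Z}\delta_n(x)=\sum_{q\in\Z}e^{2\pi\rmi qx}$ to trade the integer constraint for a sum over dual charges $q\in\Z^\Lambda$, and integrate out the real-valued Gaussian field. The result expresses the partition function and every linear statistic of $\phi$ as a complex-valued sum over $q$, in which the data $\xi$ and $\zeta$ enter only through overall \emph{phases} $e^{2\pi\rmi q\cdot(\bar\xi+\zeta)}$, with $\bar\xi$ the discrete harmonic extension of $\xi$ into $\Lambda$. The Gaussian piece alone yields exactly the GFF answer $\tfrac{1}{\beta}f\cdot\Delta_\Lambda^{-1}f$; the whole task is to show that the charge sum does not destroy this estimate, uniformly over the phases.

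The heart of the proof is then a Fröhlich--Spencer multi-scale/RG analysis that inductively integrates out charges at successively larger dyadic scales. The crucial point for our purposes is that all activity estimates reduce to $|e^{2\pi\rmi q\cdot(\bar\xi+\zeta)}|=1$, so the induction proceeds uniformly in $\xi,\zeta$; after all scales have been processed, the charges merely renormalise $\beta$ to an effective $\beta_{\rm eff}\le\beta^{cp}$, and the resulting effective Gaussian yields \eqref{eq:pSOSvar} by differentiating $\log\mu(e^{tf\cdot\phi})$ twice at $t=0$. The main obstacle I foresee lies precisely here: the multi-scale analysis has to work with \emph{complex} activities of unit modulus whose phases are arbitrary functions of $\xi,\zeta$, rather than the real positive activities of \cite{FS81a}. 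The bookkeeping must track $|z(q)|$ alone, and the construction of spin-wave transformations at each scale must avoid any use of real positivity that the oscillating phases would invalidate; this is presumably where the ``technical improvements'' over \cite{FS81a} mentioned in the abstract are concentrated, and also what should close the SOS gap flagged in the footnote.

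For the exponential moment \eqref{eq:pSOSexpm}, a pointwise second-order lower bound on $\log\mu(e^{f\cdot\phi})$ is not enough: one needs global positivity of the relevant charge contributions. The symmetry assumptions $\Lambda=-\Lambda$, $f_i=f_{-i}$, $\xi_i=-\xi_{-i}$, $\zeta\equiv 0$ make the measure invariant under the involution $\phi_\cdot\mapsto -\phi_{-\cdot}$, which pairs each charge $q$ with its reflected negative and forces the charge sum to be real and positive, turning the multi-scale estimate into a global lower bound. The restriction $p>1$, respectively the smallness hypothesis $\|\nabla((-\Delta)^{-1}f)\|_\infty<\beta$ for $p=1$, is there to ensure that the tilt $f$ can be absorbed by shifting the Gaussian mean by $\beta^{-1}(-\Delta)^{-1}f$ without leaving the regime where the multi-scale analysis applies; for $p=1$ the potential is only marginally convex and such a shift controls gradients only under the stated smallness condition.
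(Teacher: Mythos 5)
Your opening step --- using Bernstein's theorem to write $e^{-\beta|x|^p}$ as a Gaussian mixture and reducing to an integer-valued GFF with random conductances --- is a genuinely different route from the paper's, and it is precisely the route the paper flags as problematic: in Section~\ref{subsec:p_SOS} the authors note that rewriting $p$-SOS as a mixture of random-conductance GFFs ``is managable in $d=1$, but much harder in $d=2$ where one is confronted with roughening transitions for the random-conductance GFFs.'' The difficulty your ``good event for the conductances'' remark glosses over is serious: the mixing measure puts positive mass on small conductances, where the conditioned integer-valued GFF is in its localized phase, so you would have to show not merely that this is improbable but that the average variance still diverges logarithmically despite these localized contributions, with constants uniform in $N$. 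No ingredient in your sketch addresses this, and this is exactly the part the paper avoids by working directly with a carefully constructed analytic extension $I_\beta$ of $e^{-\beta|n|^p}$ (built in Appendix~\ref{app:analytic_ext}), which lets the Fr\"ohlich--Spencer complex translations apply to the $p$-SOS interaction without ever introducing random conductances.

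Two further gaps, one conceptual and one at the level of constants. First, the paper's central technical improvement over \cite{FS81a} is not the mere fact that the activities become complex (that already happens in \cite{FS81a} when one considers Laplace transforms), but that after the complex translation the factor $e^{f\cdot\phi}$ produces additional linear terms $f\cdot a_{\Ncal,\rho}$ inside the cosines; \cite[Section~7.2]{FS81a} discards these by crude absolute-value bounds and thereby loses a factor $e^{-C|\supp f|}$, which makes the resulting Laplace-transform lower bound useless for deriving variance bounds. The paper instead Taylor-expands in $f\cdot a_{\Ncal,\rho}$ and $\sigma\cdot\rho$ jointly (Claims~\ref{claim:Taylor_FG}--\ref{claim:Taylor_prod_FG}) and shows the first-order terms are harmless, so the leading constant is exactly $1$. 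Your proposal does not engage with this, yet \eqref{eq:pSOSvar} cannot be extracted from a Laplace-transform bound with a lost constant. Second, for the non-symmetric variance bound you suggest ``differentiating $\log\mu(e^{tf\cdot\phi})$ twice at $t=0$,'' but that only works if one has a genuine exponential-moment lower bound, which the paper obtains only under the reflection-symmetry hypotheses. In the general case the paper instead uses the modular-invariance-plus-Cauchy--Schwarz argument adapted from~\cite{GS24}: match the first-order terms exactly (\eqref{eq:mean_value_linear_obs}), replace $f$ by $\epsilon f$, expand to second order and apply Cauchy--Schwarz at the level of the renormalized measure. You do correctly identify the involution $\phi\mapsto-\phi^{\refl}$ as the mechanism for \eqref{eq:pSOSexpm}, and your reading of the $p>1$ / smallness restriction (integrability of $e^{f\cdot\phi}$ against the stretched-exponential tails) is essentially the paper's reason, even if phrased a bit loosely.
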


Note that by replacing $f$ by $\epsilon f$ and expanding in powers of $\epsilon$ one directly obtains that \eqref{eq:pSOSexpm} implies \eqref{eq:pSOSvar}.

In addition, from \eqref{eq:pSOSvar} one easily obtains logarithmic divergence of spins. For instance, if we consider the field on the domain $\Lambda_N=\{-N,\ldots N\}^2$, then for \(\beta<\beta_0\) and uniformly in the boundary data $\xi$ and the shift in the fibers $\zeta$ one has
\begin{equation}
    \mu_{\Lambda_N;\beta}^{p-\mathrm{SOS}; \xi,\zeta} \big( \phi_0^2 \big)
    -\mu_{\Lambda_N;\beta}^{p-\mathrm{SOS}; \xi,\zeta} \big( \phi_0 \big)^2
    =\mu_{\Lambda_N;\beta}^{p-\mathrm{SOS}; \xi,\zeta}(\phi_0;\phi_0)\geq
    \frac{c}{\beta_{\rm eff}}\ln(N).
\end{equation}

Our results are in fact more general and apply to a larger class of models for which one can find an analytic function interpolating the Hamiltonian with certain properties. We refer to Theorems~\ref{thm:LB_trigo_poly_non_sym} and~\ref{thm:LB_trigo_poly_sym}, and Section~\ref{sec:trigo_poly_to_sum} for the relevant technical statements. 

Let us remark at this point, however, that one important further example that falls into our framework is the height function dual to the XY model (as studied in \cite{vEL23b,AHPS21}, with the latter reference introducing the name ``integer-valued Bessel field''.

\subsection{Main ideas of the proofs}\label{subsec:mainideas}

Our proof method is based on the one for the zero-tilt case in \cite{FS81a} (with additional input from \cite{W19,G23}), and also relies on the argument for the GFF case with arbitrary tilt in \cite{GS24}. Our key new contributions are a novel symmetry argument and a technical improvement to the estimate for the moment-generating function in \cite[Section 7.2]{FS81a}. 

Below we give a quick summary of the proof and of our main ideas.

As a first step, one can replace the constraint of being integer-valued by considering the continuum-valued model with a periodic potential $\lambda$.
If one can prove estimates for the latter models are sufficiently uniform in $\lambda$, one can pass to limits and deduce the result for the integer-valued model. 
The next step is to re-write the site-wise ``periodic potential'' \(\lambda\) as a suitable convex combination of multipole potentials (i.e.~a coarse-graining step). It gives a re-writing of the partition function as a convex combination of partition functions with coarse-grained potentials. The third step is to extract the leading contribution in each of the partition functions, by means of complex translations. These steps are done in the same fashion as in the original argument of \cite{FS81a} in the version of \cite{W19} (the former work was concerned with free boundary conditions, the latter explained in detail the modifications necessary to study Dirichlet boundary conditions), and using an observation from \cite{G23} that allows to study arbitrary observables.

The next step is where our treatment differs:~\cite{W19} considers the \(p=2\) case which induces several simplifications. In the general case, when expanding $\int d\mu_{\Lambda;\beta}^{\xi,\RV}(\phi)\lambda(\phi) \exp(f\cdot\phi)$ into multipoles and doing the complex shifts, one gets additional error terms $f\cdot a_\rho$ (which are not present in the GFF case) that arise from applying the complex shift to $\exp(f\cdot\phi)$. In \cite[Section 6,7]{FS81a} these terms are estimated rather brutally, arguing that by dropping these terms altogether one only loses a global constant factor for reasonable choices of \(f\). While this is sufficient to give a lower bound on the two-point function of the XY model (which is dual to a Height Function model), the loss of a global constant at this stage gives only a result of the form
\begin{equation*}
    \int d\mu_{\Lambda;\beta}^{\xi,\RV}(\phi)\lambda(\phi) \exp(f\cdot\phi)
    \geq
    ce^{\frac{1}{2\beta'}f\cdot \Delta^{-1} f},
\end{equation*} with some \(c>0\) (see~\cite[(7.35), (7.40)]{FS81a}). In particular, this does not give~\cite[Theorem 7.3]{FS81a}, which is stated with \(c=1\). As a result, the lower bound on the variance, which is obtained by a second order Taylor expansion of the bound on the Laplace transform, does not follow as in~\cite[(7.18)]{FS81a}. We do not know how to deduce the variance bound (in either the zero-tilt or non-zero tilt case) without \(c=1\). We therefore proceed more cautiously than~\cite[Section 7]{FS81a}: we treat the terms $f\cdot a_\rho$ similarly to the terms $\sigma\cdot\rho$ in \cite{FS81a}. That is, we Taylor-expand everything to first order, and argue that the first-order terms are antisymmetric while the error term is controlled. See Sections~\ref{subsec:Taylor},~\ref{subsec:LB_without_sym} below for details.

Aside from this ``technical improvement'', we need a new idea to deal with
the first order terms. In~\cite{FS81a}, it is used that they are antisymmetric while the law of the field is invariant under $\phi\mapsto-\phi$, and so Jensen's inequality allows to bound them from below. However, for general boundary values the field no longer has the symmetry $\phi\mapsto-\phi$, and so this argument breaks down.
Our second new idea now is to work with the symmetry $\phi\mapsto-\phi^\refl$, where $\phi^\refl_i=\phi_{-i}$. If the domain and the boundary data happen to be antisymmetric, then can use this symmetry and proceed along the lines of \cite{FS81a} to obtain a lower bound on the Laplace transform (and eventually \eqref{eq:pSOSexpm}).

This alternative symmetry argument, however, requires strong symmetry assumptions on the domain and the boundary values. In order to obtain the much more generally applicable bound in \eqref{eq:pSOSvar}, we use instead an idea from \cite{GS24}. Namely they observed in the context of the GFF, that if after renormalizing the multipole expansion one has a good enough control on the remaining error terms, then one can replace $f$ by $\epsilon f$ already at this point and expand, which together with the Cauchy-Schwarz inequality leads to a lower bound on the variances. In order to apply this idea in our setting, however, it is crucial that one does not lose a constant factor in the renormalization as in \cite{FS81a}. This is where our ``technical improvement'' comes in again: We manage to avoid the loss of a constant factor, and hence are able to close the argument.

\medskip

Below, in Section~\ref{sec:technicalresults} we introduce the general class of height functions that our technique applies to, and state our main results in full generality. In Section~\ref{sec:chargesrenorm} we recall the multiscale method from \cite{FS81a} and state precise versions of various results from \cite{FS81a,W19} that we will need. Our main new contribution is in Section~\ref{sec:improvements}, where we give details on our technical improvements to the multiscale analysis, and use it to give proofs for the general results in Section~\ref{sec:chargesrenorm}. Finally, we explain in Section~\ref{sec:trigo_poly_to_sum} how Theorem~\ref{thm:main:p_SOS} follows from our general result.

\section{Main technical results}\label{sec:technicalresults}

As mentioned, our results apply not just to $p$-SOS models, but to a more general class of height functions. In this section we will introduce the necessary notation and state our technical results precisely.

\subsection{General notations}
We will denote by \(\dist\) the nearest-neighbour graph distance on \(\Z^2\). For \(i,j\in\Z^2\), we denote \(i\sim j\) for \(\dist(i,j) = 1\). For \(\Lambda\subset \Z^2\), denote \(\Lambda^c = \Z^2\setminus \Lambda\). Denote \(\Delta= -\nabla\cdot \nabla\) the usual (positive definite) nearest-neighbour Laplacian on \(\Z^2\), and \(\Delta_{\Lambda}\) its restriction to \(\Lambda\) with Dirichlet boundary conditions (so that \(\Delta_{\Lambda}^{-1}\) is well defined). For \(f,g:\Z^2\to \R\), denote \(f\cdot g = \sum_{i\in \Z^2} f_i g_i\) their scalar product, and extend it to gradients in the obvious manner. Also write
\begin{equation*}
    \norm{f}_{2}^2 = \sum_{i}f_i^2,\qquad \norm{\nabla f}_{2}^2 = \sum_{ i\sim j}(f_i-f_j)^2.
\end{equation*}We will denote by \(f\restrict_V\) the restriction of \(f\) to \(V\), and similarly by \((\nabla f)\restrict_{V}\) the restriction of the gradient of \(f\) to the edges of \(V\):
\begin{gather*}
    f\restrict_V(i) = f_i \text{ if } i\in V,\qquad f\restrict_V(i) = 0 \text{ else},
    \\
    (\nabla f)\restrict_{V}(ij) = f_j-f_i \text{ if } i\sim j, i,j\in V,\qquad (\nabla f)\restrict_{V}(ij) = 0 \text{ else}.
\end{gather*}Let \(M,\alpha\) be fixed with
\begin{equation*}
    M= 2^{16}, \quad \tfrac{3}{2}<\alpha < 2.
\end{equation*}
Regarding constants, ``universal constants'' are allowed to depend on \(M\) as it is a given number. To track dependency, we label the constants. Unlabeled constants (i.e.~without sub-index, e.g.~\(c,c',C,C'\)) have a value which can change from line to line and do not depend on the parameters that are allowed to vary. Their use is restricted to local calculations. For both labeled and unlabeled constants, a capital/small letter typically suggests that the corresponding constant is large/small.

Our main tool is the multipole expansion of Fröhlich and Spencer~\cite{FS81a}, with the recent upgrades of Wirth~\cite{W19}. We present here  the relevant notions/claims, and give either proofs or references to proofs of the statements. As in~\cite{W19}, we use the presentation of~\cite{KP17}. In particular, we carry out most of the proof in some abstract framework, described in the next subsection.

\subsection{Models}
\label{subsec:models}

The goal is to study integer valued random height functions in finite volume \(\Lambda\subset \Z^2\) with arbitrary boundary conditions. For \(\beta >0\), let
\begin{equation*}
    I_{\beta}:\Z\to \R
\end{equation*} be a positive, even function with at least stretched exponential tails. We moreover assume that there is an even function \(g:\R\to \R_+\) with
\begin{equation}\label{eq:g_quadr}
        g(at)\le \Cl{g_quadr}t^2g(a)\quad\forall a\ge0, t\in(0,1)
\end{equation}
for some \(\Cr{g_quadr}\geq 1\), as well as \(\epsilon_{\beta}>0\)
\(c_{\beta},c_{\beta}'> 0\)
such that \(I_{\beta}\) can be extended to an analytic function on the complex strip \(\{z\in \bbC:\ |\Im(z)| <\epsilon_{\beta}\}\), and that this extension (which we also denote \(I_{\beta}\)) satisfies the following points. 
\begin{enumerate}
    \item \label{item:Ibeta:reals}\(I_{\beta}\) is even, and real positive on \(\R\). Moreover, \(I_{\beta}\) is a Schwartz function.
    \item \label{item:Ibeta:imaginary_growth} For any \(a,x\in \R\) with \(|a|< \epsilon_{\beta} \),
    \begin{equation}
        \Big|\frac{I_{\beta}(x+\rmi a)}{I_{\beta}(x)}\Big| \leq e^{c_{\beta} g(a)}.
    \end{equation}
    \item \label{item:Ibeta:derivatives} For \(m=1,2\), and \(|a|< \epsilon_{\beta}\),
    \begin{equation}
        \Big|\frac{\dd^m}{\dd x^m} \ln\frac{I_{\beta}(x+\rmi a)}{I_{\beta}(x)} \Big|\leq \Cl{lnIratio_deriv}c_{\beta} (1+g(a)).
    \end{equation}
    for some universal constant $\Cr{lnIratio_deriv}>0$.
    \item \label{item:Ibeta:sub_quad} For any \(x\in \R\),
    \begin{equation}
        \Big|\frac{\dd^2}{\dd x^2} \ln I_{\beta}(x) \Big|\leq c_{\beta}'.
    \end{equation}
\end{enumerate}

In the above setting we will often assume that we are given an absolute constant $\Cl[c]{lowerboundmainthm}>0$ and a constant $\Cl[c]{quantsmallgamma}>0$ (whose value might depend on $\Cr{g_quadr},\Cr{lnIratio_deriv}$ only) and \(\gamma_{\beta}\in(0,\epsilon_{\beta})\) such that
\begin{equation}\label{eq:def:gamma_beta}
    \max\left(\frac{1}{\gamma_\beta},\frac{c_\beta g(\gamma_{\beta})}{\gamma_{\beta}},
    c_\beta'e^{\Cr{lowerboundmainthm}\gamma_{\beta}}\right)\le \Cr{quantsmallgamma},
    \quad \Cr{lnIratio_deriv}c_{\beta}\leq 1.
\end{equation}
We will indicate explicitly whenever that assumption is required. The condition \(\Cr{lnIratio_deriv}c_{\beta}\leq 1\) can be replaced by \(\Cr{lnIratio_deriv}c_{\beta}\) less than any fixed constant (at the expense of changing the value of $\Cr{quantsmallgamma}>0$ for which our results hold).

\begin{remark}\label{r:choiceparam}
As the above assumptions might seem very complicated at first glance, let us help the reader's intuition by explaining right away which choices of parameters one can make in our key example, the $p$-SOS model (for $0<p\le2$). In that case we must have \(I_{\beta}(n) = e^{-\beta |n|^p}\) for \(n\in\Z\).

We will see later (in Lemma~\ref{p:heightfunctextension}) that for $\beta\le1$, say, one can find an analytic extension $I_\beta$ that allows to take $g(a)=a^2(1+e^{2\pi|a|})$ (which satisfies \eqref{eq:g_quadr} with $\Cr{g_quadr}=1$) as well as $c_\beta=c_\beta'=C\beta^{1/3}$, $\epsilon_\beta=\frac{1}{2\beta^{1/3}}$ (with the constants independent of $p$). This allows to take, for example $\gamma_\beta=c|\log\beta|$ for some sufficiently small universal $c$, and then \eqref{eq:def:gamma_beta} holds as soon as $\beta$ is small enough. This ultimately leads to the estimate $\beta_{\rm eff}\le \exp\left(-c'|\log\beta|\right)=\beta^{c'}$ for another universal constant $c'$. All this is consistent with Theorem~\ref{thm:main:p_SOS}.
\end{remark}
\begin{remark}\label{r:difftoFS}
Our assumptions are based on those in \cite{FS81a}, however there are some differences:
\begin{itemize}
\item
    The analogue of condition~\ref{item:Ibeta:derivatives} in~\cite[Section 7.2]{FS81a} has a factor $g(a)$ in place of $(1+g(a))$, i.e.~is stronger. However, unlike what is claimed in~\cite[Appendix C]{FS81a}, the extension $I_\beta$ for the SOS-model does not actually satisfy this assumption.
    Our weaker assumption (which is similar to the one used in~\cite[Proof of (6.1)]{FS81a} for the XY-model), bypasses this problem.
\item
    Condition~\ref{item:Ibeta:sub_quad} is not explicitly mentioned in~\cite{FS81a}, but it is crucially used in the last step of their argument, cf. \cite[(6.28)]{FS81a}. This condition is not a very surprising one: it is the same as the one as the one encountered in the ``early rigorous versions'' of Mermin-Wagner argument in the context of continuous height function models, see for example~\cite{FP1981}.
    \item
    Moreover, \cite{FS81a} considers only two specific examples of functions $g$, namely $g(a)=a^2$ and $g(a)=\begin{cases}Ca^2& |a|\le1\\ Ce^{2\pi|a|}&\text{else}\end{cases}$. Our assumption \eqref{eq:g_quadr} is new and captures the only property of $g$ that is needed for the argument.
\end{itemize}
\end{remark}

For \(\Lambda\subset \Z^2\) finite and \(\xi\in \R^{\Z^2}\), define the weights \(\calI_{\Lambda;\beta}^{\xi}:\R^{\Lambda}\to \R_+\) 
\begin{equation*}
    \calI_{\Lambda;\beta}^{\xi}(\phi) = \prod_{\{i,j\} \subset\Lambda, i\sim j} I_{\beta }(\phi_i-\phi_j) \prod_{i\in \Lambda,j\in \Lambda^c, i\sim j }I_{\beta }(\phi_i-\xi_j),
\end{equation*}
as well as the probability measures \(\mu_{\Lambda;\beta}^{\xi,\RV}\), \(\mu_{\Lambda;\beta}^{\xi,\IV}\), and \(\mu_{\Lambda;\beta}^{\xi,\zeta,\IV}\) (with \(\zeta\in \R^{\Z^2}\)) supported on, respectively, \(\R^{\Lambda}\), \(\Z^{\Lambda}\), and \(\Z^{\Lambda}\) via
\begin{gather*}
    d \mu_{\Lambda;\beta}^{\xi,\RV}(\phi) \propto \calI_{\Lambda;\beta}^{\xi}(\phi) \prod_{i\in \Lambda}d\phi_i,
    \qquad
    d \mu_{\Lambda;\beta}^{\xi,\IV}(\phi) \propto \calI_{\Lambda;\beta}^{\xi}(\phi) \prod_{i\in \Lambda}\sum_{n\in \Z}d\delta_n(\phi_i),
    \\
    d \mu_{\Lambda;\beta}^{\xi,\zeta,\IV}(\phi) \propto \calI_{\Lambda;\beta}^{\xi}(\phi) \prod_{i\in \Lambda}\sum_{n\in \Z}d\delta_n(\phi_i-\zeta_i).
\end{gather*}

When \(\RV/\IV\) are omitted, it is set to be \(\RV\). We will sometimes leave \(\beta\) implicit in the measures, so we often remove it from the notation.

\subsection{Main result}
\label{subsec:main_result_trigo_poly}

The constraint that $\phi_i-\zeta_i\in\Z$ is very degenerate. Since, formally,
\[\sum_{n\in\Z}\delta_n(x)=\sum_{q\in\Z}e^{2\pi i q x}=1+2\sum_{q=1}^\infty \cos(2\pi q x)\]
it is natural to replace this constraint by a periodic density given by a finite trigonometric polynomial. Once we have estimates in that case that are sufficiently uniform in the trigonometric polynomial, it will be possible to pass to the limit and deduce Theorem \ref{thm:main:p_SOS}. Thus our main technical results will be stated for continuum-valued measures with reference measure $d \mu_{\Lambda;\beta}^{\xi,\RV}(\phi)$, and with a finite trigonometric polynomial as the density.

In detail, say that \(\lambda : \bbC \to \bbC\) is a real, even, normalized trigonometric polynomial if there are an integer \(N\geq 1\), and numbers \(\hat{\lambda}(q)\in \R\), \(q= 1,\dots, N\) such that
\begin{equation*}
    \lambda(x) = 1 + 2\sum_{q=1}^N \hat{\lambda}(q) \cos(q x).
\end{equation*}Denote by \(\TrigoPolySet\) the set of real, even, normalized trigonometric polynomials. If we are given a set \(\Lambda\) and trigonometric polynomials \(\lambda_i\in \TrigoPolySet\), \(i\in \Lambda\), set
\begin{equation*}
    \lambda_{\Lambda} : \bbC^{\Lambda} \to \bbC,
    \quad
    \lambda_{\Lambda}(x) = \prod_{i\in \Lambda} \lambda_{i}(x_i).
\end{equation*}

For \(\Lambda,\xi,\zeta\) as in Section~\ref{subsec:models}, and \(\lambda_{\Lambda}\) a trigonometric polynomial, define (when possible)
\begin{equation}
\label{eq:def:trigo_modif_meas}
    d\mu_{\Lambda;\beta,\lambda_{\Lambda}}^{\xi,\zeta}(\phi)
    :=
    \frac{d\mu_{\Lambda;\beta}^{\xi}(\phi)\lambda_{\Lambda}(\phi-\zeta)}{\int d\mu_{\Lambda;\beta}^{\xi}(\varphi)\lambda_{\Lambda}(\varphi-\zeta)}.
\end{equation}Note that, while being normalised, these are in general signed measures (and might a priori not even be well-defined). 
We can now state the main technical result of this work.
\begin{theorem}
    \label{thm:LB_trigo_poly_non_sym}
In the setting described in Section~\ref{subsec:models}, there are constants $\Cr{lowerboundmainthm}>0,\Cr{quantsmallgamma}>0$ (depending on $\Cr{g_quadr},\Cr{lnIratio_deriv}$ only) such that whenever there is $\gamma_\beta$ such that \eqref{eq:def:gamma_beta} holds, we have the following.

Let \(\lambda_i\in \TrigoPolySet, i\in \Lambda\) be such that \(\hat{\lambda}_i(q)\leq 1\) for all \(i,q\). Then, all of the following hold.
    \begin{enumerate}
        \item The expression~\eqref{eq:def:trigo_modif_meas} is well defined.
        \item For any \(\Lambda\subset \Z^2\) finite and simply connected, any \(\xi,\zeta\in \R^{\Z^2}\), and any \(f\in \R^{\Lambda}\),
        \begin{equation}\label{eq:LB_trigo_poly_var}
            \int d\mu_{\Lambda;\beta,\lambda_{\Lambda}}^{\xi,\zeta}(\phi) (f\cdot \phi)^2 - \Big(\int d\mu_{\Lambda;\beta,\lambda_{\Lambda}}^{\xi,\zeta}(\phi) f\cdot \phi\Big)^2
            \geq e^{\Cr{lowerboundmainthm}\gamma_{\beta}} f\cdot \Delta_{\Lambda}^{-1} f,
        \end{equation}where \(\gamma_{\beta}\) is as in Section~\ref{subsec:models}.
    \end{enumerate}
\end{theorem}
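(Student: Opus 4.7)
The plan is to bound the Laplace transform of $f\cdot\phi$ from below via the multiscale machinery set up in Sections~\ref{sec:chargesrenorm}--\ref{sec:improvements}, following the strategy outlined in Section~\ref{subsec:mainideas}. Define
\[ L(s) := \int d\mu_{\Lambda;\beta}^\xi(\phi)\,\lambda_\Lambda(\phi-\zeta)\,e^{s f\cdot\phi},\qquad s\in\R. \]
Part (1) of the statement amounts to $L(0)\neq 0$, which will fall out of the quantitative lower bound on $L(0)$ produced below. For part (2), the claim \eqref{eq:LB_trigo_poly_var} is equivalent to $(\log L)''(0)\ge e^{\Cr{lowerboundmainthm}\gamma_\beta} f\cdot \Delta_\Lambda^{-1} f$, since $(\log L)''(0)$ is exactly the variance under the normalized signed measure $\mu_{\Lambda;\beta,\lambda_\Lambda}^{\xi,\zeta}$. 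A clean way to extract the second derivative without needing to know the first one is to establish
\[ L(s)L(-s)\ge L(0)^{2}\exp\!\Bigl(s^{2}\,e^{\Cr{lowerboundmainthm}\gamma_\beta}\,f\cdot\Delta_\Lambda^{-1}f+O(s^{4})\Bigr) \]
as $s\to 0$, because the left-hand side is even in $s$ and its logarithm equals $2(\log L)(0)+s^{2}(\log L)''(0)+O(s^{4})$. This reformulation trades any symmetry hypothesis on $\Lambda,\xi,\zeta$ for the $s\leftrightarrow -s$ symmetry that is built into the Laplace approach, which is the key idea borrowed from~\cite{GS24}.

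To lower-bound $L(s)$, I expand each $\lambda_i(\phi_i-\zeta_i)=1+2\sum_q \hat\lambda_i(q)\cos(q(\phi_i-\zeta_i))$ into Fourier modes, and then invoke the multiscale coarse-graining/sine-Gordon representation of Section~\ref{sec:chargesrenorm} to rewrite $L(s)$ as a positive-weight mixture of quantities of the form
\[ Z(\rho;s)=\int d\mu_{\Lambda;\beta}^{\xi}(\phi)\,e^{i\rho\cdot(\phi-\zeta)}\,e^{s f\cdot\phi} \]
indexed by neutral multi-scale charge configurations $\rho$. For each $\rho$, I apply the Fröhlich--Spencer complex shift $\phi\mapsto\phi+ia_\rho$ with $a_\rho$ tuned scale-by-scale to $\rho$ as in \cite{FS81a,W19}; conditions~\ref{item:Ibeta:reals}--\ref{item:Ibeta:sub_quad} guarantee that the analytic continuation of $\calI_{\Lambda;\beta}^{\xi}$ is controlled uniformly on the strip $\{|\Im z|<\epsilon_\beta\}$, and \eqref{eq:g_quadr} together with \eqref{eq:def:gamma_beta} ensures that the resulting Gaussian exponent $-\tfrac{1}{2\beta_{\rm eff}}\|\nabla a_\rho\|_2^{2}$ dominates the charge-charge interaction. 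The crucial novelty (compared to~\cite[\S7]{FS81a}) is that the shift also acts on $e^{s f\cdot\phi}$, producing an extra factor $e^{is f\cdot a_\rho}$ which must be carried exactly, rather than absorbed into a global constant; see Section~\ref{sec:improvements} for the precise version.

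Forming the product $Z(\rho;s)Z(\rho;-s)$, the odd-in-$s$ terms $is f\cdot a_\rho$ contributed by the two complex shifts add imaginary parts whose squares combine, via a Cauchy--Schwarz/symmetrization step, into non-negative contributions that can be dropped in a lower bound; the same mechanism handles the phases $e^{i\rho\cdot(\phi-\zeta)}$ (Taylor-expanded to second order as in Section~\ref{subsec:Taylor}). What survives is an even-in-$s$ quadratic whose resummation along the multiscale charge hierarchy produces precisely the kernel $e^{\Cr{lowerboundmainthm}\gamma_\beta}\,\Delta_\Lambda^{-1}$ acting on $f$. The main obstacle is quantitative bookkeeping in this last step: one must verify that at every scale the new $s$-dependent error terms arising from $e^{is f\cdot a_\rho}$ are smaller than the quadratic bulk by a factor depending only on $\Cr{g_quadr},\Cr{lnIratio_deriv}$ and on $\gamma_\beta$ through \eqref{eq:def:gamma_beta}, uniformly in $\Lambda$, $\xi$, $\zeta$ and $f$. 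This is where conditions~\ref{item:Ibeta:derivatives}--\ref{item:Ibeta:sub_quad} are used in tandem with the stretched-exponential tail of $I_\beta$, and it is the content of the technical improvements carried out in Section~\ref{sec:improvements} that allow the argument to close without losing a multiplicative constant.
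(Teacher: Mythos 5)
Your overall strategy is sound but takes a genuinely different route from the paper in the final extraction step, while relying on the same underlying machinery (multipole expansion, activity renormalization, complex shifts, and the key improvement of Taylor-expanding the $f\cdot a_{\Ncal,\rho}$ terms rather than dropping them). The paper derives from the renormalized expansion an \emph{exact first-order identity} (\ref{eq:mean_value_linear_obs}) for $\int f\cdot\phi\,d\tilde\mu$, combines it with the exponential-moment \emph{inequality} (\ref{eq:ineqexpmoment}), Taylor-expands both sides of the inequality to second order (the first orders match exactly because of the identity), and then applies Cauchy--Schwarz to $\int g^2\,d\tilde\nu \ge (\int g\,d\tilde\nu)^2$. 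You instead form $L(s)L(-s)$ and exploit the built-in $s\leftrightarrow -s$ evenness to isolate $(\log L)''(0)$ directly, sidestepping the first-order identity. This works: denoting $L(s)/L(0)=\sum_\Ncal c'_\Ncal u_\Ncal(s)$ and using $u_\Ncal(s)\ge e^{\frac12 s^2 A}\int d\nu_\Ncal e^{sg_\Ncal}$ from (\ref{eq:estquadraticquantitative}), one needs Cauchy--Schwarz \emph{twice} — once over ensembles, $(\sum_\Ncal c'_\Ncal u_\Ncal)(\sum_\Ncal c'_\Ncal v_\Ncal)\ge(\sum_\Ncal c'_\Ncal\sqrt{u_\Ncal v_\Ncal})^2$, and once per ensemble, $\int e^{sg_\Ncal}\int e^{-sg_\Ncal}\ge 1$ — to obtain $L(s)L(-s)/L(0)^2\ge e^{s^2 A}$; your sketch gestures at these but never spells out the first one, which is the step that handles the double sum over ensembles. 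Your conceptual insight — that the Laplace transform's $s\leftrightarrow -s$ symmetry substitutes for the spatial symmetry used in Theorem~\ref{thm:LB_trigo_poly_sym} — is a nice point. However, be aware of a few imprecisions in the sketch: the multipole expansion yields a convex combination over \emph{ensembles} $\Ncal$ with factors $\prod_{\rho\in\Ncal}(1+2K(\rho)\cos(\phi\cdot\rho))$, not a direct mixture over single-charge integrands $\int e^{i\rho\cdot\phi}$; the charges are not all neutral (and the case $\rho\cdot 1\neq 0$ is treated through the modified diameter $\dd_\Lambda$); and the exponent gained from the complex shift is $-c_\beta\sum g(\nabla a_\rho)$ (via Assumption~\ref{item:Ibeta:imaginary_growth}), not the Gaussian $-\tfrac{1}{2\beta_{\rm eff}}\|\nabla a_\rho\|_2^2$, which is special to $p=2$. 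Finally, the appearance of $\Delta_\Lambda^{-1}$ is not a ``resummation along the multiscale hierarchy'' but simply the explicit choice $\sigma=e^{\Cr{lowerboundmainthm}\gamma_\beta}\Delta_\Lambda^{-1}f$ made before the Taylor step.
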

The second point is \emph{almost} a variance bound: the only thing preventing it from being one is that we are not a priori looking at a probability measure.

Under symmetry assumptions we can strengthen this ``variance bound'' to a bound on ``exponential moments'' (with the same caveat that we might not be looking at a probability measure). To state the result, we define
\begin{equation}\label{eq:def_reflectionsymmetry}
    \Lambda^{\refl} = \{-x:\ x\in \Lambda\}, \quad f_i^{\refl} = f_{-i}.
\end{equation}
Then we have the following bound.
\begin{theorem}    \label{thm:LB_trigo_poly_sym}
In the setting of Theorem~\ref{thm:LB_trigo_poly_sym} assume additionally that
\begin{equation*}
    f^{\refl} = f,\quad \Lambda = \Lambda^{\refl},\quad \xi = -\xi^{\refl}, \quad \zeta=-\zeta^{\refl},\quad \lambda_{i} = \lambda_{-i}.
\end{equation*}
Then we also have that
 \begin{equation}\label{eq:LB_trigo_poly_expmom}
    \int d\mu_{\Lambda;\beta,\lambda_{\Lambda}}^{\xi,\zeta}(\phi) e^{f\cdot \phi}
    \geq \exp\Big(\tfrac12e^{\Cr{lowerboundmainthm}\gamma_{\beta}} f\cdot \Delta_{\Lambda}^{-1} f\Big)
\end{equation}
whenever the left-hand side is well defined.

\end{theorem}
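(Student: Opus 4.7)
The plan is to apply the Fröhlich--Spencer multiscale machinery of Section~\ref{sec:chargesrenorm} directly to the Laplace transform $\int d\mu_{\Lambda;\beta}^\xi(\phi)\,\lambda_\Lambda(\phi-\zeta)\,e^{f\cdot\phi}$, and to exploit the involution $T:\phi\mapsto -\phi^{\refl}$ (which, under the stated symmetry hypotheses, is a symmetry of the full reference weight) in order to eliminate the odd contributions produced by the complex shift acting on $e^{f\cdot\phi}$. After the multipole/charge expansion, the integrand becomes a convex combination over neutral charge ensembles $\rho$ of oscillating terms $e^{i\rho\cdot(\phi-\zeta)}$ multiplied by $\calI_{\Lambda;\beta}^\xi(\phi)\,e^{f\cdot\phi}$.

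For each fixed $\rho$ we perform two shifts in the $\phi$ integral. The standard Fröhlich--Spencer imaginary shift $\phi\to\phi+ia_\rho$, produced inductively by the renormalization scheme of~\cite{FS81a,W19}, extracts the charge energy $\exp(-\tfrac12\rho\cdot(\cdots)\rho)$ and allows the inductive step to go through. Additionally, the real shift $\phi\to\phi+e^{\Cr{lowerboundmainthm}\gamma_\beta}\Delta_\Lambda^{-1}f$ completes the square for $e^{f\cdot\phi}$ against the effective Gaussian weight produced by the expansion, yielding the target prefactor $\exp\bigl(\tfrac12 e^{\Cr{lowerboundmainthm}\gamma_\beta}\, f\cdot\Delta_\Lambda^{-1}f\bigr)$. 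After these shifts the integrand picks up an extra oscillating factor of the form $e^{i f\cdot a_\rho}\cdot e^{i\rho\cdot (\phi+b)}$ together with a multiplicative correction $\calI_{\Lambda;\beta}^\xi(\phi+ia_\rho)/\calI_{\Lambda;\beta}^\xi(\phi)$ whose logarithm is expanded to first and second order via Assumptions~\ref{item:Ibeta:derivatives} and~\ref{item:Ibeta:sub_quad}.

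The key point, as sketched in Section~\ref{subsec:mainideas}, is that under $T$ the reference measure $d\mu_{\Lambda;\beta}^\xi$ is invariant (since $I_\beta$ is even, $\Lambda=\Lambda^{\refl}$ and $\xi=-\xi^{\refl}$), the density $\lambda_\Lambda(\phi-\zeta)$ is invariant (since each $\lambda_i$ is even, $\lambda_i=\lambda_{-i}$ and $\zeta=-\zeta^{\refl}$), while $f\cdot\phi$ flips sign under $T$ (as $f=f^{\refl}$) and the pairing $\rho\cdot\phi$ flips sign when $\rho$ is simultaneously mapped to $\rho^{\refl}$. Since the multipole decomposition of the even density $\lambda_\Lambda$ is stable under $\rho\mapsto\rho^{\refl}$, symmetrizing in $\rho$ together with the change of variables $\phi\mapsto T\phi$ makes the first-order Taylor contributions in both $f\cdot a_\rho$ and $\rho\cdot(\phi+b)$ vanish identically, exactly as in the symmetrization argument of~\cite[Section~7]{FS81a} but with the involution $T$ replacing the less restrictive $\phi\mapsto -\phi$. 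Only the zero-order term (equal to $1$) and the second-order Taylor remainder survive.

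The main obstacle will be quantitative control of this second-order remainder: unlike in~\cite[Section~7]{FS81a}, we cannot afford to lose any multiplicative constant at this step, since the exponent in~\eqref{eq:LB_trigo_poly_expmom} must carry the optimal factor $1/2$. This is precisely the ``technical improvement'' flagged in Section~\ref{subsec:mainideas}, and to achieve it we combine the second-derivative bounds from Assumptions~\ref{item:Ibeta:derivatives} and~\ref{item:Ibeta:sub_quad} with the quantitative calibration~\eqref{eq:def:gamma_beta}: the inequality $c_\beta' e^{\Cr{lowerboundmainthm}\gamma_\beta}\le \Cr{quantsmallgamma}$ is tuned so that the total quadratic error accumulated through the multiscale induction is absorbed into the margin available in the Gaussian prefactor. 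Feeding these estimates back into the inductive scheme of Section~\ref{sec:chargesrenorm} then closes the argument and yields~\eqref{eq:LB_trigo_poly_expmom}.
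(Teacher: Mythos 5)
Your high-level plan is right: exploit the involution $\phi\mapsto-\phi^{\refl}$ to remove the odd part of the renormalized expansion, keep the $\sigma\cdot f$ term, and absorb the quadratic error via the calibration \eqref{eq:def:gamma_beta}. However, the specific mechanism you propose has a gap, and it is a different one from what the paper uses.

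You propose to eliminate the first-order terms by ``symmetrizing in $\rho$ together with the change of variables $\phi\mapsto T\phi$,'' which relies on the claim that ``the multipole decomposition of the even density $\lambda_\Lambda$ is stable under $\rho\mapsto\rho^{\refl}$.'' This is not established, and it is not automatic: Theorem~\ref{thm:convex_combination_ensembles} produces a \emph{specific} decomposition obtained from a construction that makes many arbitrary choices (square coverings, centers, etc.), and the same is true of the spin waves $a_{\Ncal,\rho}$ in Lemma~\ref{lem:spin_waves}. Even under the hypotheses $\Lambda=\Lambda^{\refl}$, $\lambda_i=\lambda_{-i}$, nothing guarantees that the output family $\Fcal$ is stable under $\Ncal\mapsto\Ncal^{\refl}$ with $K(\rho^{\refl})=K(\rho)$, nor that $a_{\Ncal^{\refl},\rho^{\refl}}=a_{\Ncal,\rho}^{\refl}$. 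One could try to average the decomposition with its reflection and re-verify properties (i)--(iii) of Theorem~\ref{thm:convex_combination_ensembles} and the five properties of Lemma~\ref{lem:spin_waves}, but that is real extra work and you neither flag it nor do it, so as stated the cancellation you invoke does not go through.

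The paper avoids this entirely by symmetrizing \emph{before} the multipole expansion, at the level of $f$ rather than $\rho$: the change of variables $\varphi=-\phi^{\refl}$ (using $f^{\refl}=f$, $\Lambda=\Lambda^{\refl}$, $\xi=-\xi^{\refl}$, $\zeta=-\zeta^{\refl}$, $\lambda_i=\lambda_{-i}$) gives the scalar identity
\[
\int d\phi\,\calI_\Lambda^\xi(\phi)\,e^{f\cdot\phi}\lambda_\Lambda(\phi-\zeta)
=\int d\phi\,\calI_\Lambda^\xi(\phi)\,e^{-f\cdot\phi}\lambda_\Lambda(\phi-\zeta),
\]
so the Laplace transform equals the average of its $+f$ and $-f$ versions. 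One then applies the multipole expansion \emph{once}, to the same $\Fcal$, obtaining $\tfrac12\sum_\Ncal c_\Ncal\bigl(Z^\zeta_\Ncal(f;\sigma)+Z^\zeta_\Ncal(-f;-\sigma)\bigr)$ with $\sigma=e^{\Cr{lowerboundmainthm}\gamma_\beta}\Delta_\Lambda^{-1}f$. Since the first-order Taylor functionals $T(\phi,\sigma)$ and $S_\Ncal(\phi,\sigma,f)$ from Claims~\ref{claim:Taylor_FG} and~\ref{claim:Taylor_lnI} are linear in $(f,\sigma)$, the contributions to $Z^\zeta_\Ncal(f;\sigma)$ and $Z^\zeta_\Ncal(-f;-\sigma)$ have opposite linear parts, and the elementary inequality $e^x+e^{-x}\ge 2$ removes them without requiring any reflection symmetry of $\Fcal$ or of the spin waves. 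This is strictly cleaner than what you propose and is what makes the exponent come out with the sharp factor $\tfrac12$.

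A second, more minor issue: your description of the shifts is imported from the Gaussian intuition. There is no ``charge energy $\exp(-\tfrac12\rho\cdot(\cdots)\rho)$'' and no ``completing the square against an effective Gaussian weight'' here; the imaginary shift produces the renormalized activity $z_\rho=2K(\rho)e^{-E_\beta(a_\rho,\rho)}$ with the non-quadratic $E_\beta$ of \eqref{eq:def:E_beta}, and the prefactor $\exp(\tfrac12 e^{\Cr{lowerboundmainthm}\gamma_\beta}f\cdot\Delta_\Lambda^{-1}f)$ comes from the bookkeeping $\sigma\cdot f-\tfrac{c_\beta'}{2}\|\nabla\sigma\|_2^2-2e^{-\Cr{energy}\gamma_\beta/2}\|\nabla\sigma\|_2^2\ge\tfrac12 e^{-\Cr{lowerboundmainthm}\gamma_\beta}\|\nabla\sigma\|_2^2$ in \eqref{eq:estquadraticquantitative}, not from a square completion.
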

Note that \eqref{eq:LB_trigo_poly_var} follows directly from \eqref{eq:LB_trigo_poly_expmom} by replacing $f$ with $\epsilon f$ and expanding in powers of $\varepsilon$. Thus the conclusion of Theorem~\ref{thm:LB_trigo_poly_sym} is strictly stronger than the one of Theorem~\ref{thm:LB_trigo_poly_non_sym}.

The proof of Theorems~\ref{thm:LB_trigo_poly_non_sym} and~\ref{thm:LB_trigo_poly_sym} occupies Sections~\ref{sec:chargesrenorm} and \ref{sec:improvements}.

\section{Charges and renormalization}\label{sec:chargesrenorm}

Our proofs are an adaption of the multiscale method of \cite{FS81a}, with some modifications from \cite{W19}. In this section we state all relevant definitions and recall various results from \cite{FS81a,W19} and the alternative presentation in \cite{KP17}. None of the results in this section are novel.

\subsection{Charge densities and square covering}

The multiscale method of \cite{FS81a} proceeds by a complicated coarse-graining argument that is based on combinatorics of (integer-valued) charge densities. We begin by collecting the relevant definitions in this subsection.
\subsubsection*{Distances, diameter, and related notations}

For \(f,g: \Z^2\to \bbC\), define
\begin{itemize}
    \item the \emph{support} of \(f\): \(\supp f = \{i\in \Z^2:\ f(i) \neq 0\}\);
    \item the \emph{support of the gradient} of \(f\):
    \begin{equation*}
        \supp \nabla f = \{i\in \Z^2: \ \exists j\sim i\ f(i)\neq f(j)\};
    \end{equation*}
    \item the \emph{diameter} of \(f\): \(\diam f = \sup_{i,j\in \supp f} \dist(i,j)\);
    \item the \emph{distance} between (the supports of) \(f\) and \(g\):
    \begin{equation*}
        \dist(f,g) = \inf_{i\in \supp f, j\in \supp g} \dist(i,j).
    \end{equation*}
    \item for \(\Lambda\subset \Z^2\), the \(\Lambda\)-\emph{modified diameter} of \(f\):
    \begin{equation*}
        \dd_{\Lambda}(f) = \begin{cases}
        \diam(f) & \text{ if } f \cdot 1 = 0 \text{ (} f \text{ neutral)},
        \\
        \max\big(\diam(f),\dist(\supp f, \Lambda^c)\big) & \text{ else}.
    \end{cases}
    \end{equation*}
\end{itemize}

\subsubsection*{Charges densities and charge ensembles}

\begin{itemize}
    \item A \emph{charge density} is a non-zero function \(\rho : \Z^2 \to \Z\) with finite support. Denote \(\ChargeDensitySet\) the set of charge densities, and \(\ChargeDensitySet_{\Lambda}\) the set of charge densities with support included in \(\Lambda\).
    \item A \emph{charge ensemble} \(\Ecal\) is a finite (possibly empty) set of charge densities, with the property that if \(\rho,\varrho\in \Ecal\) and \(\rho \neq \varrho\), then \(\supp \rho \cap \supp \varrho = \varnothing\) (disjoint supports).
    \item The \emph{total charge} of a charge density \(\rho\) is \(\rho \cdot 1 = \sum_{i} \rho(i)\). \(\rho\) is said to be \emph{neutral} if \(\rho \cdot 1 = 0\). When \(\rho \cdot 1 \neq 0\), \(\rho\) is said to be \emph{charged} or \emph{non-neutral}.
    \item For \(\rho,\varrho\in \ChargeDensitySet\), say that \(\varrho\) is a \emph{sub-density} of \(\rho\), denoted \(\varrho \subset \rho\) if \(\supp \varrho \subset \supp \rho\) and \(\rho -\varrho\) is \(0\) on \(\supp \varrho\).
\end{itemize}

\subsubsection*{Square covering of densities}

For \(k\geq 0\), let \(\SquareSet_k\) be the set of \(2^k\times 2^k\) squares:
\begin{equation*}
    \SquareSet_k = \big\{x+\{0,\dots, 2^{k}-1\}^2:\ x\in \Z^2\big\}.
\end{equation*}

For \(\Lambda\subset \Z^2\) fixed, and for a charge density \(\rho \in \ChargeDensitySet_{\Lambda}\), define its \(k\)-\emph{square covering}, \(\SquareCover_k(\rho)\) to be a set of squares \(\{S_1,\dots,S_m\} \subset \SquareSet_k\) such that
\begin{enumerate}
    \item \(\supp \rho \subset \cup_{l=1}^m S_l\);
    \item \(m\) is the smallest integer for which the first point holds.
\end{enumerate}Note that there are in general several choices for a cover satisfying the two above points. As it is only used in the derivation of results we import from~\cite{KP17,W19}, we make the same choice as in these works. Note that \(\SquareCover_k(\rho)\) depends only on \(\supp \rho\), so we extend the notation to \(\SquareCover_k(W)\) for \(W\subset \Z^2\) finite by setting \(\SquareCover_k(W) \equiv \SquareCover_k(\mathds{1}_W)\), with \(\mathds{1}_W\) the charge density equals to \(1\) on \(W\) and \(0\) else.

Define the \emph{centre} of a charge density \(\rho\), \(\DensCenter(\rho)\), to be the (chosen by some fixed rule in case of several choices) site \(v_*\) such that there is \(j\in \supp \rho\) with \(\dist(v_*,j) = \diam \rho\) when \(\dd_{\Lambda}(\rho) = \diam \rho\), and such that \(\dist(v_*,\Lambda^c) = \dd_{\Lambda}(\rho)\) otherwise. Define the \emph{envelope} of a charge density \(\rho\) by
\begin{equation}
    \label{eq:def:Envelope}
    \DensSquareEnv_{\Lambda}(\rho) = \{i\in \Z^2:\ \dist(\DensCenter(\rho),i)< 2\dd_{\Lambda}(\rho)  \},
\end{equation}and say that \(\DensCenter_{\Lambda}(\rho)\) is the \emph{centre} of \(\DensSquareEnv_{\Lambda}(\rho)\). Let
\begin{equation}
    \label{eq:def:Large_Envelope}
    \DensSquareEnv_{\Lambda}^+(\rho) = \{i\in \Z^2:\ \dist(i,\DensSquareEnv_{\Lambda}(\rho))\leq 1\}.
\end{equation}
For \(\Lambda\subset \Z^2\), define also:
\begin{itemize}
    \item the \emph{scale} of \(\rho \in \ChargeDensitySet\),
    \begin{equation}
        \scale_{\Lambda}(\rho) := \lceil \log_2(M\dd_{\Lambda}(\rho)^{\alpha} )\rceil;
    \end{equation}
    \item for \(\rho \in \ChargeDensitySet\),
    \begin{equation}
        A_{\Lambda}(\rho) :=
        \sum_{k=0}^{\scale_{\Lambda}(\rho)} |\SquareCover_k(\rho)|;
    \end{equation}
    \item the set of \emph{well separated} squares in the \(k\)-cover of \(\rho\in \ChargeDensitySet\): define \(\SquareCover^{\sep}_k(\rho)\) by
    \begin{itemize}
        \item if \(|\SquareCover_k(\rho)|>1\):
        \begin{equation*}
            \SquareCover^{\sep}_k(\rho) = \big\{ s\in \SquareCover_k(\rho):\ \min_{s'\neq s}\dist(s,s')\geq 2M2^{\alpha(k+1)} \big\}
        \end{equation*}where the \(\min\) is over \(s'\in \SquareCover_k(\rho)\);
        \item if \(|\SquareCover_k(\rho)| = 1\) and \(\rho\cdot 1 \neq 0\) and \(\dist(\rho, \Lambda^c)\geq 2^{k+1}\), \(\SquareCover^{\sep}_k(\rho) = \SquareCover_k(\rho)\);
        \item else \(\SquareCover^{\sep}_k(\rho) = \varnothing\).
    \end{itemize}
\end{itemize}

One has a control over \(A(\rho)\) in terms of \(\SquareCover^{\sep}_k(\rho)\) and \(\dd_{\Lambda}(\rho)\). This is~\cite[Proposition 20]{W19} (which is an adaptation of \cite[Proposition 2.1]{KP17}).

\begin{lemma}[\cite{W19}, Proposition 20]
    \label{lem:control_A_with_Ssep}
    There is \(\Cl{A_Lambda}> 0\) such that for any \(\Lambda\subset \Z^2\), \(\rho\in \ChargeDensitySet_{\Lambda}\),
    \begin{equation*}
        \log_2(1+\dd_{\Lambda}(\rho) ) \leq A_{\Lambda}(\rho)\leq \Cr{A_Lambda} \Big(|\SquareCover_0(\rho)| + \sum_{k=1}^{\scale_{\Lambda}(\rho)} |\SquareCover_{k}^{\sep}(\rho)|\Big).
    \end{equation*}
\end{lemma}

\subsection{Multipole expansion}

Next we restate a core part of the multiscale argument of \cite{FS81a}, an expansion of the trigonometric density as a linear combination over charge ensembles of constraints in terms of the charges in that ensemble.

\begin{theorem}
\label{thm:convex_combination_ensembles}
Let \(\Lambda\subset \Z^2\) be a finite connected set. Let \(\lambda_i\in \TrigoPolySet\), \(i\in \Lambda\). There exists a finite collection of ensembles $\Fcal$, positive coefficients $(c_{\Ncal})_{\Ncal \in \Fcal}$ summing to 1 and real numbers $(K(\rho))_{\rho \in \Ncal, \Ncal \in \Fcal}$, such that for every $\phi\colon \Lambda \to \bbC$,
\begin{equation}
\label{e:convex_combination_ensembles}
    \lambda_{\Lambda}(\phi) = \sum_{\Ncal \in \Fcal} c_\Ncal \prod_{\rho \in \Ncal}\left(1 + 2K(\rho) \cos(\phi\cdot\rho)\right).
\end{equation}
Additionally, the following properties are satisfied for each $\Ncal \in \Fcal$:
\begin{enumerate}[label=(\roman*)]
    \item\label{i:distinctchargessep} If $\rho, \rho' \in \Ncal$ are distinct then $\dist(\rho,\rho') \ge M \left(\min(\dd_{\Lambda}(\rho), \dd_{\Lambda}(\rho'))\right)^\alpha$.
    \item\label{i:isolateddensitycharged} If $\rho_1 \subset \rho \in \Ncal$, $\rho_1 \neq \rho$, satisfies $\dist(\rho_1, \rho-\rho_1) \ge 2M \diam(\rho_1)^\alpha$, then $\rho_1\cdot 1 \neq 0$ and
    \begin{equation*}
        2M \dist(\rho_1, \Lambda^c)^\alpha > \dist(\rho_1, \rho-\rho_1).
    \end{equation*}
    \item\label{i:boundcoeffdecomp} The coefficients $K(\rho)$ satisfy 
    \begin{equation*}
        2|K(\rho)|\le \exp(\Cl{entropy_ensemble} A_{\Lambda}(\rho))\prod_{v \in \supp(\rho)} 2\exp(|\rho_v|)\big|\hat\lambda_v(|\rho_v|)\big|,
    \end{equation*}
    for some (universal) constant $\Cr{entropy_ensemble}$.
\end{enumerate}
\end{theorem}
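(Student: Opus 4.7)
The plan is to derive the statement by the multiscale coarse-graining procedure of Fröhlich--Spencer, specifically in the form developed for trigonometric-polynomial densities in \cite{W19} and in the combinatorial presentation of \cite{KP17}. Since the theorem is a restatement of a known result, the strategy is to adapt the argument rather than reinvent it; in particular the assumption $\hat\lambda_i$ is bounded above by $1$ is not needed for \eqref{e:convex_combination_ensembles} but will feed into (iii).

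First I would expand each factor by Fourier series: writing $\lambda_i(\phi_i)=\sum_{n\in\Z}\hat\lambda_i(n)e^{\rmi n \phi_i}$ with $\hat\lambda_i(0)=1$ and $\hat\lambda_i(n)=\hat\lambda_i(-n)$, one obtains
\begin{equation*}
    \lambda_\Lambda(\phi)=\sum_{\rho:\Lambda\to\Z,\ |\supp\rho|<\infty}\bigg(\prod_{i\in\supp\rho}\hat\lambda_i(\rho_i)\bigg)e^{\rmi \rho\cdot\phi}.
\end{equation*}
Pairing $\rho$ with $-\rho$ gives a real cosine series indexed by charge densities. The total-charge density $\lambda_\Lambda$ is then viewed as a sum over formal collections of elementary ``charges'' that sit on single sites.

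Next I would organise these single-site charges into ensembles scale by scale. For $k=0,1,2,\ldots$ one iteratively merges clusters of charges whose supports are closer than $M\cdot 2^{\alpha k}$, following exactly the inductive merging procedure of \cite[Section 2]{KP17} (or \cite[Section 4]{W19}): at each scale one either declares a cluster ``isolated'' at that scale — in which case it becomes a $\rho\in\Ncal$ with $\dd_\Lambda(\rho)\approx 2^k$, automatically satisfying separation (i) — or one merges it with a neighbouring cluster and proceeds to the next scale. The non-trivial option of ``splitting off'' a sub-density is taken only when the sub-density is \emph{charged} and far from $\Lambda^c$, which is precisely what enforces condition (ii): any sub-density that is separated from the rest but was not split off must be non-neutral and satisfy the stated distance estimate to $\Lambda^c$. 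The combinatorial bookkeeping assigns nonnegative weights $c_\Ncal$ to each resulting ensemble; probabilistic normalisation is obtained by interpreting the merging choices as a probability distribution, giving $\sum_\Ncal c_\Ncal=1$. Re-summing the paired cosines that live on the same $\rho\in\Ncal$ yields a factor $1+2K(\rho)\cos(\phi\cdot\rho)$ for each $\rho$, producing \eqref{e:convex_combination_ensembles}.

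The main obstacle will be the bound (iii) on $|K(\rho)|$. It is obtained by tracking, for each charge $\rho\in\Ncal$, how many elementary Fourier modes could have been merged into $\rho$ across the scales $0,\ldots,\scale_\Lambda(\rho)$; each merging step costs an entropy factor bounded by a constant to the power of $|\SquareCover_k(\rho)|$, and summing these yields the prefactor $\exp(\Cr{entropy_ensemble}A_\Lambda(\rho))$, while the product over $v\in\supp\rho$ absorbs the original Fourier coefficients $\hat\lambda_v(|\rho_v|)$ together with a benign factor $e^{|\rho_v|}$ coming from the algebraic reorganization into cosines. The subtle point is to ensure that the entropy coming from the merge choices is bounded \emph{per ensemble element}, rather than globally, which is exactly the content of the proofs in \cite[Proposition 2.2]{KP17} and \cite[Theorem 19]{W19}; I would reproduce their combinatorics essentially verbatim to conclude.
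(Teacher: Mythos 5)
Your proposal follows essentially the same route as the paper: the paper's proof is simply a citation of \cite[Theorem 21]{W19} (itself the \cite{KP17} presentation of the Fröhlich--Spencer coarse-graining), with the sole modification that the single-site weights $e^{q^2}$ are replaced by $e^{q}$ — which is exactly the factor $e^{|\rho_v|}$ you identify in the bound (iii). Your sketch of the Fourier expansion, the scale-by-scale merging giving (i)--(ii), and the per-density entropy bookkeeping giving (iii) matches that argument, so there is nothing to correct.
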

\begin{proof}
Properties \ref{i:distinctchargessep}, \ref{i:isolateddensitycharged} and \ref{i:boundcoeffdecomp} are essentially \cite[Theorem 21]{W19}. The only difference is that in the first step of the proof we do not use the weights $e^{q^2}$, but $e^q$ (which might lead to an additional factor of 2).
\end{proof}

From this result, we obtain the following:
\begin{corollary}
\label{cor:multipole_exp_unnorm_correlations}
    Let \(\Lambda\subset \Z^2\) be a finite connected set. Let \(\lambda_i\in \TrigoPolySet\), \(i\in \Lambda\). Then, there are \(\Fcal\), \((c_{\Ncal})_{\Ncal\in \Fcal}\), \(K(\rho),\rho\in\Ncal\in \Fcal\) as in Theorem~\ref{thm:convex_combination_ensembles} such that for any \(f,\sigma,\zeta \colon \Lambda\to \R\), one has
    \begin{multline*}
        \int d\phi \calI_{\Lambda}^{\xi}(\phi) e^{f\cdot \phi} \lambda_{\Lambda}(\phi- \zeta)
        \\=
        \sum_{\Ncal\in \Fcal}c_{\Ncal} \underbrace{e^{\sigma\cdot f}\int d\phi \calI_{\Lambda}^{\xi}(\phi+\sigma) e^{f\cdot \phi} \prod_{\rho\in \Ncal}\big(1 + 2K(\rho)\cos(\phi\cdot \rho - \zeta\cdot \rho + \sigma \cdot \rho)\big)}_{=: Z_{\Ncal}^{\zeta}(f;\sigma)}.
    \end{multline*}
\end{corollary}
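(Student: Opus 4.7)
The proof is essentially a direct computation combining Theorem~\ref{thm:convex_combination_ensembles} with a translation of the integration variable, so my proposal is short.

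\textbf{Step 1: apply the multipole expansion to the shifted argument.} I would use Theorem~\ref{thm:convex_combination_ensembles} not with $\phi$ but with $\phi-\zeta$ in the argument. Since the identity \eqref{e:convex_combination_ensembles} holds pointwise for every $\phi\colon\Lambda\to\bbC$, plugging in $\phi-\zeta$ yields
\begin{equation*}
    \lambda_{\Lambda}(\phi-\zeta) = \sum_{\Ncal \in \Fcal} c_\Ncal \prod_{\rho \in \Ncal}\bigl(1 + 2K(\rho) \cos((\phi-\zeta)\cdot\rho)\bigr).
\end{equation*}
This produces the collection $\Fcal$, the coefficients $c_\Ncal$ and the weights $K(\rho)$ that will appear in the conclusion; importantly they do not depend on $\zeta$ or $\sigma$.

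\textbf{Step 2: substitute and exchange sum with integral.} Inserting the above expansion into $\int d\phi\,\calI_{\Lambda}^{\xi}(\phi) e^{f\cdot\phi}\lambda_{\Lambda}(\phi-\zeta)$ and using that the sum over $\Fcal$ is finite, I can pull the sum outside the integral. This requires no integrability discussion beyond what is already implicitly present in the statement (the product $\calI_{\Lambda}^{\xi}(\phi)\lambda_{\Lambda}(\phi-\zeta)$ is integrable because $I_\beta$ is Schwartz by condition~\ref{item:Ibeta:reals} and $\lambda_\Lambda$ is a bounded trigonometric polynomial; the $e^{f\cdot\phi}$ factor is then controlled by the same Schwartz tail as long as we look at a finite expression, which is the case after expanding the cosines into complex exponentials).

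\textbf{Step 3: translate by $\sigma$.} For each fixed $\Ncal$ I perform the change of variables $\phi\mapsto\phi+\sigma$ (a translation of Lebesgue measure on $\R^\Lambda$, which is invariant). Under this change, $\calI_\Lambda^\xi(\phi)\mapsto \calI_\Lambda^\xi(\phi+\sigma)$, $e^{f\cdot\phi}\mapsto e^{f\cdot\sigma}e^{f\cdot\phi}$, and each cosine transforms via
\begin{equation*}
    \cos((\phi-\zeta)\cdot\rho)\ \longmapsto\ \cos\bigl(\phi\cdot\rho-\zeta\cdot\rho+\sigma\cdot\rho\bigr).
\end{equation*}
Pulling the $\sigma$-independent prefactor $e^{f\cdot\sigma}$ in front of the integral reproduces exactly the integrand defining $Z_\Ncal^\zeta(f;\sigma)$ in the statement, completing the proof.

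There is no genuine obstacle here: the lemma is really just the statement that the multipole expansion of Theorem~\ref{thm:convex_combination_ensembles} commutes with (a) shifting the argument of $\lambda_\Lambda$ by the fixed field $\zeta$ and (b) translating the integration variable by the arbitrary field $\sigma$ (with the compensating factor $e^{f\cdot\sigma}$). The only thing worth double-checking is that $\sigma$ is allowed to be completely arbitrary and that $\Fcal$, $(c_\Ncal)$, $K(\rho)$ are produced by the underlying $\lambda_i$'s alone, independently of the auxiliary data $f,\sigma,\zeta$; this is automatic from Step~1.
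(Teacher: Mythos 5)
Your proposal is correct and is essentially the paper's own proof, which consists precisely of applying the change of variables $\phi\mapsto\phi+\sigma$ and invoking Theorem~\ref{thm:convex_combination_ensembles} (evaluated at the shifted argument), with $\Fcal$, $(c_{\Ncal})$, $K(\rho)$ indeed determined by the $\lambda_i$ alone. Only your parenthetical integrability remark is slightly too optimistic (Schwartz decay alone does not dominate $e^{f\cdot\phi}$, which is why the paper later regularizes for $p<1$), but the identity is used at the same formal level in the paper, so this does not affect the argument.
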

\begin{proof}
    Use the change of variable \(\phi\mapsto \phi + \sigma\) and apply Theorem~\ref{thm:convex_combination_ensembles}.
\end{proof}
Now, if we denote
\begin{equation*}
    Z_{\Ncal}^{\zeta} := Z_{\Ncal}^{\zeta}(0;0).
\end{equation*}
we have obtained the representation
\begin{equation}
    \label{eq:multipol_exp:correlation_I}
    \int d\mu_{\Lambda}^{\xi} e^{f\cdot \phi}\lambda_{\Lambda}(\phi)
    \propto
    \sum_{\Ncal\in \Fcal}c_{\Ncal} Z_{\Ncal}^{\zeta}(f;\sigma),
\end{equation}where the proportionality constant does not depend on \(f,\sigma\).

\subsection{Modified measure}

Starting from the combinatorial expansion of the previous section, we will, as in~\cite{FS81a}, ``renormalize'' the activities of each charge density. This will lead to a representation of \(Z_{\Ncal}^{\zeta}(f;\sigma)\) as the expectation of a suitable observable under a suitable positive, finite, measure. This measure will by defined using an alternative weight function. We define it here and state its properties, and will recall the definition at the relevant place.
For \(x,a\in \R\), let
\begin{equation}
    \label{eq:def:iota_beta}
    \iota_{\beta}(x,a) = \frac{I_{\beta}(x + \rmi a)}{I_{\beta}(x)}e^{-(2\Cr{lnIratio_deriv}+1)c_{\beta}g(a)}
\end{equation}
For \(V\subset \Z^2\) finite, let \(E_V = \{\{i,j\}:\ i\in V, j\sim i \}\), and set
\begin{equation}
\label{eq:def:iota_V}
    \iota_{V}(\phi, a)\equiv \iota_{V;\beta}(\phi, a) = \prod_{\{i,j\}\in E_V} \iota_{\beta}(\phi_i-\phi_j,a_i-a_j).
\end{equation}

\begin{lemma}
    \label{lem:iota_prop_derivatives}
   In the setting of Subsection~\ref{subsec:models} the functions \(\iota_{\beta}\), \(\iota_{V}\) satisfy
    \begin{enumerate}
        \item \(|\iota_{\beta}(x,a)|\leq e^{-2\Cr{lnIratio_deriv}c_{\beta}g(a) }\), thus \(|\iota_{V}(\phi, a)|\leq \prod_{i\sim j\in V}e^{-2\Cr{lnIratio_deriv}c_{\beta}g(a_i-a_j) }\);
        \item for \(a: \Z^2 \to \R\) with \(\sup_{i\sim j}|a_i-a_j|< \epsilon_{\beta}\), and \(\{i,j\},\{u,v\}\in E_{V}\),
        \begin{gather*}
            \Big|\frac{\dd}{\dd (\phi_i-\phi_j)} \iota_{V}(\phi, a)\Big|
            \leq 
            1+\Cr{lnIratio_deriv}c_{\beta}
            \\
            \Big|\frac{\dd^2}{\dd (\phi_i-\phi_j)\dd (\phi_u-\phi_v)} \iota_{V}(\phi, a)\Big|
            \leq
            (\Cr{lnIratio_deriv}c_{\beta}+1)(\Cr{lnIratio_deriv}c_{\beta}+2).
        \end{gather*}
    \end{enumerate}
\end{lemma}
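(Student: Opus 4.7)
My plan is to reduce the entire lemma to bounds on the single-edge function $\iota_\beta$ and its first two $x$-derivatives, and then use the product structure of $\iota_V$ in~\eqref{eq:def:iota_V} to propagate.

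For part~1, the definition~\eqref{eq:def:iota_beta} combined with Property~\ref{item:Ibeta:imaginary_growth} gives immediately
\[|\iota_\beta(x,a)| = \left|\frac{I_\beta(x+\rmi a)}{I_\beta(x)}\right| e^{-(2\Cr{lnIratio_deriv}+1)c_\beta g(a)} \le e^{c_\beta g(a)}\cdot e^{-(2\Cr{lnIratio_deriv}+1)c_\beta g(a)} = e^{-2\Cr{lnIratio_deriv}c_\beta g(a)},\]
and the product over edges in $E_V$ then yields the bound on $|\iota_V|$.

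For part~2, I would factor $\iota_\beta(x,a) = f(x)\cdot e^{-(2\Cr{lnIratio_deriv}+1)c_\beta g(a)}$ with $f(x) = I_\beta(x+\rmi a)/I_\beta(x)$, and use the logarithmic-derivative identities $\partial_x \iota_\beta = \iota_\beta \cdot (f'/f)$ and $\partial_x^2 \iota_\beta = \iota_\beta\cdot\bigl((f'/f)^2 + (f'/f)'\bigr)$. Here $f'/f$ and $(f'/f)'$ are well-defined without any branch-cut ambiguity and coincide with the derivatives of $\log f$ whose moduli are controlled by Property~\ref{item:Ibeta:derivatives}. Combining these bounds $|(f'/f)|, |(f'/f)'|\le \Cr{lnIratio_deriv}c_\beta g(a)$ with part~1, and writing $t := \Cr{lnIratio_deriv}c_\beta g(a)\ge 0$, gives
\[|\partial_x\iota_\beta(x,a)| \le t\,e^{-2t}, \qquad |\partial_x^2\iota_\beta(x,a)| \le (t+t^2)\,e^{-2t},\]
both of which are bounded by $1$ uniformly in $t\in[0,\infty)$ by elementary calculus (the first is maximised at $t=1/2$ with value $1/(2e)$, the second at $t=1/\sqrt{2}$ with value $<1/2$).

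Propagating to $\iota_V$ is then immediate: since the $\{i,j\}$-factor in~\eqref{eq:def:iota_V} is the only one depending on the edge variable $\phi_i-\phi_j$, one has $\partial_{\phi_i-\phi_j}\iota_V = (\partial_x\iota_\beta)(\phi_i-\phi_j,a_i-a_j)\cdot\prod_{\text{other edges}}\iota_\beta$, and each remaining $\iota_\beta$-factor has modulus at most $1$ by part~1 (using $g\ge 0$). For the second derivative, if $\{u,v\}=\{i,j\}$ we pick up $\partial_x^2\iota_\beta$ on that edge; otherwise we get a product $(\partial_x\iota_\beta)\cdot(\partial_x\iota_\beta)$ on the two distinct edges together with the remaining $\iota_\beta$-factors; in all cases the bound $\le 1$ follows. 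The argument is essentially a bookkeeping exercise exploiting the extra slack $-\Cr{lnIratio_deriv}c_\beta g(a)$ absorbed in the prefactor of~\eqref{eq:def:iota_beta}, and I do not anticipate any substantive obstacle beyond correctly tracking this cancellation together with the elementary fact that $t^k e^{-2t}$ is uniformly bounded on $[0,\infty)$ for $k=1,2$.
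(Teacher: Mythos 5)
Your proposal is correct and follows essentially the same route as the paper: part~1 is the definition plus Property~\ref{item:Ibeta:imaginary_growth}, and part~2 uses the identities $f'=f(\ln f)'$ and $f''=f\bigl((\ln f)''+((\ln f)')^2\bigr)$ together with Property~\ref{item:Ibeta:derivatives}, absorbing the factors $t=\Cr{lnIratio_deriv}c_\beta g(a)$ into the slack $e^{-2t}$ from part~1. Your explicit treatment of the mixed-edge case and the elementary bounds on $te^{-2t}$ and $(t+t^2)e^{-2t}$ just spell out details the paper leaves implicit.
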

\begin{proof}
    The first point follows from the definition and Property~\ref{item:Ibeta:imaginary_growth} of \(I_{\beta}\). To get the second point, note that, writing \(x_{ij} \equiv x_i -x_j\) and using \(f' = f(\ln f)'\),
    \begin{align*}
        \frac{\dd}{\dd \phi_{ij}} \iota_V(\phi,a)
        &=
        \iota_V(\phi,a) \frac{I_{\beta}(\phi_{ij})}{I_{\beta}(\phi_{ij} + \rmi a_{ij})}\frac{\dd}{\dd \phi_{ij}} \frac{I_{\beta}(\phi_{ij}+ \rmi a_{ij})}{I_{\beta}(\phi_{ij} )}
        \\
        &=
        \iota_V(\phi,a) \frac{\dd}{\dd \phi_{ij}} \ln\Big(\frac{I_{\beta}(\phi_{ij}+ \rmi a_{ij})}{I_{\beta}(\phi_{ij} )}\Big).
    \end{align*}
    So, from the first point and Property~\ref{item:Ibeta:derivatives} of \(I_{\beta}\),
    \begin{equation*}
        \Big|\frac{\dd}{\dd \phi_{ij}} \iota_V(\phi,a)\Big|
        \leq
        \Cr{lnIratio_deriv}c_{\beta} (1+g(a_i-a_j))e^{-2\Cr{lnIratio_deriv}c_{\beta}g(a_i-a_j) }
        \leq
        \Cr{lnIratio_deriv}c_{\beta}+1,
    \end{equation*}as \(xe^{-x}\leq 1\) for \(x\geq 0\). The bound on the second derivative follows in the same fashion.
\end{proof}

\subsection{Activity renormalization}
\label{subsec:activity_renorm}

We now renormalize the activity of the charge densities in \(Z_{\Ncal}^{\zeta}(f;\sigma)\), compensating the entropy of a charge density, \(K(\rho)\), by some small activity. The activity renormalization is performed by mean of complex translations, exactly as in~\cite{FS81a}. The quantity measuring the energy gain is given by
\begin{equation}
\label{eq:def:E_beta}
    E_{\beta}(\rho, a) = \rho \cdot a - (2\Cr{lnIratio_deriv}+1)c_{\beta} \sum_{i\sim j} g(a_i-a_j)
\end{equation}where \(a\) is a \emph{spin-wave} (the value of the complex translation). The first term is the bare energy gain, whilst the second is the loss due to the choice of normalization in the \(\iota_{V}\). The spin waves used in this step are chosen to have large \(E_{\beta}\), and are given by the next Lemma.

\begin{lemma}
    \label{lem:spin_waves}
In the setting of Subsection~\ref{subsec:models} suppose we are given some $\gamma_\beta\in(0,\epsilon_\beta$. Then the following holds. For any \(\Lambda\subset \Z^2\) finite and connected, and \(\Ncal\) a charge ensemble satisfying the properties given by Theorem~\ref{thm:convex_combination_ensembles}, one can construct spin waves \(a_{\Ncal,\rho}:\Lambda\to \R\), \(\rho\in \Ncal\) such that for any \(\rho,\rho'\in \Ncal\),
    \begin{enumerate}
        \item \label{spinWave:support}\(\supp a_{\Ncal,\rho}\subset \DensSquareEnv_{\Lambda}(\rho)\cap \Lambda\);
        \item \label{spinWave:orthogonality} when \(\rho\neq \rho'\), one has that \(\supp \nabla a_{\Ncal,\rho}\cap \supp \nabla a_{\Ncal,\rho'} = \varnothing\), and \(a_{\Ncal,\rho}\cdot \rho' =0\);
        \item \label{spinWave:energy} There are universal constants $\Cl[c]{gamma}>0$, $\Cl[c]{energy}>0$ such that $\frac{c_{\beta}g(\gamma_\beta)}{\gamma_\beta}\le \frac{\Cr{gamma}}{\Cr{g_quadr}}$ implies that
        \[E_{\beta}(a_{\rho},\rho)\geq \Cr{energy}\gamma_\beta\Big(\|\rho\|_1+A_\Lambda(\rho)\Big);\]
        \item \label{spinWave:gradient}for any \(i\sim j\), \(|a_{\Ncal,\rho}(i)-a_{\Ncal,\rho}(j)|\leq \gamma_{\beta} \);
        \item \label{spinWave:scalar_prod} \(\norm{\nabla a_{\Ncal,\rho}}_2^2 \leq
        \gamma_{\beta}^2 \Cl{norm_gradSpinWave} \big(A_{\Lambda}(\rho)+\|\supp(\rho)\|_{1}\big)\) where \(\Cr{norm_gradSpinWave}\) is a universal constant.
    \end{enumerate}
\end{lemma}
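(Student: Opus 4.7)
The plan is to construct $a_{\Ncal,\rho}$ by the standard multiscale procedure of Fröhlich--Spencer, as refined in~\cite[Section 4]{KP17} and~\cite{W19}. For each $\rho\in\Ncal$, I would write $a_{\Ncal,\rho}$ as a sum, over scales $k=0,\ldots,\scale_\Lambda(\rho)$ and over well-separated squares $s\in\SquareCover_k^{\sep}(\rho)$, of plateau functions that equal a signed multiple of $\gamma_\beta$ near $s$ and decay linearly to zero over a buffer of width $\sim 2^{\alpha k}$. Per-scale amplitudes and buffer shapes are tuned so that the cumulative increment across any single edge stays bounded by $\gamma_\beta$, yielding item~\ref{spinWave:gradient}; and the support of each plateau sits inside $\DensSquareEnv_\Lambda(\rho)\cap\Lambda$ by definition~\eqref{eq:def:Envelope}, yielding item~\ref{spinWave:support}.

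With this construction, item~\ref{spinWave:scalar_prod} is routine: each plateau at scale $k$ has gradient of size $O(\gamma_\beta/2^{\alpha k})$ on a region of area $O(2^{2\alpha k})$, contributing $O(\gamma_\beta^2)$ to the Dirichlet energy, and the total number of plateaus is bounded by $|\SquareCover_0(\rho)|+\sum_{k\ge1}|\SquareCover_k^{\sep}(\rho)| \lesssim A_\Lambda(\rho)+|\supp\rho|$ via Lemma~\ref{lem:control_A_with_Ssep}. For the energy gain in item~\ref{spinWave:energy}, I would decompose $\rho\cdot a_{\Ncal,\rho}$ by scale: at scale $0$ each site $i\in\supp\rho$ sits in its own unit square, giving contribution $\gtrsim\gamma_\beta|\rho_i|$ after choosing plateau sign $\sgn(\rho_i)$, which sums to $\gtrsim\gamma_\beta\|\rho\|_1$; at scale $k\ge1$, the separation condition on the well-separated cover combined with property~\ref{i:isolateddensitycharged} of Theorem~\ref{thm:convex_combination_ensembles} forces the sub-density of $\rho$ sitting inside each $s\in\SquareCover_k^{\sep}(\rho)$ to be charged, so pairing with a plateau of height $\pm\gamma_\beta$ yields $\gtrsim\gamma_\beta$ per square, totaling $\gtrsim\gamma_\beta A_\Lambda(\rho)$ via Lemma~\ref{lem:control_A_with_Ssep}. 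The loss term in~\eqref{eq:def:E_beta} is controlled using~\eqref{eq:g_quadr} and $|a_i-a_j|\le\gamma_\beta$ by
\[
c_\beta\sum_{i\sim j}g(a_i-a_j)\le\Cr{g_quadr}\frac{c_\beta g(\gamma_\beta)}{\gamma_\beta^2}\|\nabla a_{\Ncal,\rho}\|_2^2 \lesssim \Cr{g_quadr}\frac{c_\beta g(\gamma_\beta)}{\gamma_\beta}\,\gamma_\beta\,\bigl(A_\Lambda(\rho)+|\supp\rho|\bigr),
\]
which is a fraction of the gain as soon as $c_\beta g(\gamma_\beta)/\gamma_\beta\le\Cr{gamma}/\Cr{g_quadr}$ for a sufficiently small universal $\Cr{gamma}$.

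I expect the main obstacle to be item~\ref{spinWave:orthogonality}. Disjointness of the gradient supports across different $\rho,\rho'\in\Ncal$ reduces to a geometric check: the gradient support of $a_{\Ncal,\rho}$ is contained in an $O(M\cdot 2^{\alpha k})$-neighborhood of scale-$k$ squares, and property~\ref{i:distinctchargessep} of Theorem~\ref{thm:convex_combination_ensembles} provides a separation of at least $M\min(\dd_\Lambda(\rho),\dd_\Lambda(\rho'))^\alpha$ between distinct charges, which exceeds this neighborhood once $M$ is large and $\alpha>\tfrac32$ as fixed in the setup. The identity $a_{\Ncal,\rho}\cdot\rho'=0$ for $\rho\neq\rho'$ is then obtained by arranging the plateaus so that $a_{\Ncal,\rho}$ vanishes on $\supp\rho'$ entirely, again relying on this separation. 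The delicate part is the combinatorial bookkeeping required to maintain these two properties simultaneously across all $\rho\in\Ncal$ while still preserving the per-scale energy estimates; since the structural conditions on $\Ncal$ provided by Theorem~\ref{thm:convex_combination_ensembles} match those exploited in~\cite{W19,KP17}, the construction there transfers with only cosmetic changes to accommodate the modified diameter $\dd_\Lambda$ encoding distance to $\Lambda^c$.
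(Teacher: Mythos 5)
Your proposal follows essentially the same route as the paper: the spin wave is built, exactly as in~\cite{FS81a,KP17,W19}, as a $\gamma_\beta$-scaled sum of scale-$0$ site functions and scale-$k$ plateau functions over well-separated squares (see the paper's Appendix~A, Lemmas~\ref{lem:spin_wave_level_0} and~\ref{lem:spin_wave_square} and the definition~\eqref{eq:def_arho}), with the orthogonality Property~\ref{spinWave:orthogonality} secured by the structural conditions of Theorem~\ref{thm:convex_combination_ensembles} together with the constraint that each plateau be \emph{constant} on the enlarged envelopes $\DensSquareEnv^+(\rho')$ of the comparably-small charges $\rho'\in\Ncal_{\lesssim}(\rho)$. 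One remark worth making: the orthogonality $a_{\Ncal,\rho}\cdot\rho'=0$ is not obtained, as you write, by making $a_{\Ncal,\rho}$ vanish on $\supp\rho'$ — for a neutral $\rho'$ inside $\DensSquareEnv_\Lambda(\rho)$ that would waste energy; instead it suffices that $a_{\Ncal,\rho}$ is constant on $\DensSquareEnv^+(\rho')$, and the constant is then automatically zero when $\rho'$ is non-neutral because its envelope reaches $\Lambda^c$ while $\supp a_{\Ncal,\rho}\subset\Lambda$.

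There is one genuine (if minor) difference in how you close Property~\ref{spinWave:energy}. You bound the loss term $(2\Cr{lnIratio_deriv}+1)c_\beta\sum_{i\sim j}g(a_i-a_j)$ by applying~\eqref{eq:g_quadr} edgewise with $t=|a_i-a_j|/\gamma_\beta\le1$ (from Property~\ref{spinWave:gradient}) and then invoking the Dirichlet-energy bound of Property~\ref{spinWave:scalar_prod}. The paper instead estimates the $g$-sum directly via the per-scale, per-cluster decomposition~\eqref{eq:decompEbeta}--\eqref{eq:decompEbeta_square2}, which keeps track of the cluster boundary contributions $\diam(E)/2^k$ separately. Both routes work; yours is slightly cleaner once Properties~\ref{spinWave:gradient} and~\ref{spinWave:scalar_prod} are in hand, while the paper's direct decomposition makes the cluster-boundary bookkeeping explicit. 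Two imprecisions to correct: the buffers should decay over width $\sim 2^k$, not $\sim 2^{\alpha k}$ (the latter would not respect the support constraint for non-neutral $\rho$ at the top scales, and is in any case not what~\cite{KP17} does); and the prefactor in~\eqref{eq:def:E_beta} is $(2\Cr{lnIratio_deriv}+1)c_\beta$, not $c_\beta$, which matters only insofar as $\Cr{gamma}$ must absorb $\Cr{lnIratio_deriv}$. Finally, the statement about ``cumulative increment across any single edge'' understates the key structural point that the gradient supports of the different $b_{\rho,k}^s$ (and of $b_{\rho,0}$) are pairwise \emph{disjoint}; this is what gives Properties~\ref{spinWave:gradient} and~\ref{spinWave:scalar_prod} with universal constants, whereas mere summation of overlapping scale-$k$ gradients would cost an extra logarithmic factor.
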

This lemma is essentially proven in \cite{FS81a}, yet for completeness we give a proof in Appendix~\ref{sec:spin_waves}. 

\noindent We then use these spin waves to renormalize the activities in the expansion given by Corollary~\ref{cor:multipole_exp_unnorm_correlations}. As we will mostly work with \(\Ncal\) fixed, and we will use these objects several times in the equations, we will frequently use the shorthands
\begin{equation}
    \label{eq:shorthand_spin_waves}
    a_{\rho} \equiv a_{\Ncal,\rho},\qquad V_{\rho} \equiv \supp \nabla a_{\rho}.
\end{equation}
Using Lemma~\ref{lem:complex_translations}, one has that \(e^{-\sigma\cdot f}Z_{\Ncal}^{\zeta}(f,\sigma)\) is equal to
\begin{multline*}
    \sum_{n\in \{-1,0,1\}^{\Ncal}}\int d\phi \calI_{\Lambda}^{\xi}(\phi+\sigma) e^{f\cdot \phi} \prod_{\rho\in \Ncal}K(\rho)^{|n_{\rho}|}e^{\rmi (\phi\cdot \rho- \zeta\cdot \rho + \sigma \cdot \rho)n_{\rho}}
    \\
    =
    \sum_{n\in \{-1,0,1\}^{\Ncal}}\int d\phi \calI_{\Lambda}^{\xi}(\phi+\sigma + \rmi a_n) e^{f\cdot \phi}e^{\rmi f\cdot a_n} \prod_{\rho\in \Ncal}\big(e^{-a_{\Ncal,\rho}\cdot \rho}K(\rho)\big)^{|n_{\rho}|}e^{\rmi (\phi\cdot \rho - \zeta\cdot \rho + \sigma \cdot \rho)n_{\rho}},
\end{multline*}where
\begin{equation*}
    a_n = \sum_{\rho\in \Ncal} n_{\rho} a_{\Ncal,\rho},
\end{equation*}and we used that by construction \(a_{\Ncal,\rho}\cdot \rho' = 0\) when \(\rho \neq \rho'\).

Now, recall the weights \(\iota_{V}(\phi, a)\) from~\eqref{eq:def:iota_V}, and its properties (Lemma~\ref{lem:iota_prop_derivatives}). In particular, \(|\iota_{V}(\phi, a)|\leq 1\). As \(V_{\rho},V_{\rho'}\) are disjoint for \(\rho\neq \rho'\) (see Lemma~\ref{lem:spin_waves}, item~\ref{spinWave:orthogonality}), using the shorthand~\eqref{eq:shorthand_spin_waves}, one has
\begin{equation*}
    \frac{\calI_{\Lambda}^{\xi}(\phi+\sigma + \rmi a_n)}{\calI_{\Lambda}^{\xi}(\phi +\sigma)}
    =
    \prod_{\rho \in \Ncal} \Big(\prod_{i\sim j: \{i,j\}\cap V_{\rho}\neq \varnothing} e^{(1+2\Cr{lnIratio_deriv})g_{\beta}(a_{\rho}(i)-a_{\rho}(j))}\Big)\iota_{V_{\rho}}(\phi+\sigma, n_{\rho}a_{\Ncal,\rho}).
\end{equation*}Plugging this in the previous expression for \(e^{-\sigma\cdot f}Z_{\Ncal}^{\zeta}(f,\sigma)\) and re-exchanging summation over \(n\) and integral over \(\phi\), one obtains that \(Z_{\Ncal}^{\zeta}(f,\sigma)\) is equal to
\begin{multline*}
    e^{\sigma\cdot f}\int d\phi \calI_{\Lambda}^{\xi}(\phi+\sigma) e^{f\cdot \phi} \prod_{\rho\in \Ncal} \Big(1
    +
    z_{\rho} \Re(\iota_{V_{\rho}}(\phi+\sigma, a_{\Ncal,\rho}))\cos((\phi-\zeta +\sigma)\cdot \rho + f\cdot a_{\Ncal,\rho}) 
    \\
    -
    z_{\rho}\Im(\iota_{V_{\rho}}(\phi+\sigma, a_{\Ncal,\rho})) \sin((\phi-\zeta +\sigma)\cdot \rho + f\cdot a_{\Ncal,\rho})\Big),
\end{multline*}
where
\begin{equation*}
    z_{\rho} = 2e^{-a_{\Ncal,\rho}\cdot \rho}K(\rho)\prod_{i\sim j: \{i,j\}\cap V_{\rho}\neq \varnothing} e^{(1+2\Cr{lnIratio_deriv})g_{\beta}(a_{\rho}(i)-a_{\rho}(j))}
    \equiv 2K(\rho) e^{-E_{\beta}(a_{\Ncal,\rho}, \rho)},
\end{equation*}and we used that, as \(I_{\beta}\) is real on the reals, \(\iota_{V}(\phi, -a) = \overline{\iota_{V}(\phi,a)}\). Now, notice that by Theorem~\ref{thm:convex_combination_ensembles} item \ref{i:boundcoeffdecomp}, and Lemma~\ref{lem:spin_waves}, item~\ref{spinWave:energy}, one has, if $\gamma_\beta$ is larger than some (absolute) constant,
\begin{equation}
    \label{eq:renorm_activity_upper_bound}
        |z_{\rho}|
        \leq
        e^{-\Cr{energy}\gamma_\beta(\|\rho\|_1+A_\Lambda(\rho))} \prod_{i\in \supp \rho} 2e^{|\rho_i|}|\hat{\lambda}(|\rho_i|)|
        \leq
        e^{-\Cr{energy}\gamma_\beta\norm{\rho}_{1}/2
        - \Cr{energy}\gamma_\beta A_\Lambda(\rho)},
\end{equation}
Here we used \(|\hat{\lambda}|\leq 1\) and Lemma \ref{lem:control_A_with_Ssep} in the last step. 

In particular, one has that \(|z_{\rho}|\leq 8^{-1}\) uniformly in $\rho$ for any \(\gamma_\beta\) large enough, so
\begin{multline*}
    1
    +
    z_{\rho} \Re(\iota_{V_{\rho}}(\phi+\sigma, a_{\Ncal,\rho}))\cos((\phi - \zeta + \sigma) \cdot \rho + f\cdot a_{\Ncal,\rho})
    \\
    -
    z_{\rho}\Im(\iota_{V_{\rho}}(\phi+\sigma, a_{\Ncal,\rho})) \sin((\phi - \zeta + \sigma)\cdot \rho + f\cdot a_{\Ncal,\rho})>0.
\end{multline*}In particular, \(Z_{\Ncal}^{\zeta}>0\), so letting
\begin{equation*}
    c'_{\Ncal} = \frac{c_{\Ncal}Z_{\Ncal}^{\zeta}}{\sum_{\Ncal\in \Fcal} c_{\Ncal} Z_{\Ncal}^{\zeta}},
\end{equation*}one has \(c'_{\Ncal}>0\), \(\sum_{\Ncal\in \Fcal} c'_{\Ncal} = 1\), and~\eqref{eq:multipol_exp:correlation_I} gives
\begin{equation}
    \label{eq:multipol_exp:correlation_II}
    \frac{\int d\mu_{\Lambda}^{\xi}(\phi) e^{f\cdot \phi}\lambda_{\Lambda}(\phi-\zeta)}{\int d\mu_{\Lambda}^{\xi}(\varphi) \lambda_{\Lambda}(\varphi-\zeta)}
    =
    \sum_{\Ncal\in \Fcal}c_{\Ncal}' \frac{Z_{\Ncal}^{\zeta}(f;\sigma) }{Z_{\Ncal}^{\zeta}}.
\end{equation}It remains to find lower bounds on~\eqref{eq:multipol_exp:correlation_II}. To this end, we need to be more careful than what is done in~\cite{FS81a}. This is the content of the next section (and the main contribution of this work).

\section{Improved treatment of the renormalized model}\label{sec:improvements}

After all the prelimaries from Section~\ref{sec:chargesrenorm}, we can now proceed to our main contribution, the proofs of Theorem~\ref{thm:LB_trigo_poly_non_sym} and \ref{thm:LB_trigo_poly_sym}.

Throughout this section, we will assume that we are in the setting of Subsection~\ref{subsec:models}, and that we have \eqref{eq:def:gamma_beta} for some $\Cr{lowerboundmainthm}>0$ and some sufficiently small positive constant $\Cr{quantsmallgamma}>0$. By choosing $\Cr{quantsmallgamma}$ small enough, this in particular ensures that $\gamma_\beta$ is large enough that the derivation of \eqref{eq:multipol_exp:correlation_II} in the previous subsection is valid.

\subsection{Taylor expansions}
\label{subsec:Taylor}

The main step in the proof of the theorems is to provide lower bounds on ratios \(\frac{Z_{\Ncal}^{\zeta}(f;\sigma) }{Z_{\Ncal}^{\zeta}}\) obtained in Subsection~\ref{subsec:activity_renorm}. Like in \cite{FS81a}, we will do so by Taylor-expanding the numerator to second order. The main difference to \cite{FS81a} is that we will do this expansion not just in $\sigma\cdot\rho$ but also in $f\cdot a_{\Ncal,\rho}$, which allows us to avoid the loss of a multiplicative constant. As the details are very technical, we first give the full expansion and then explain in Remark~\ref{r:techimprovement} how exactly we deviate from \cite{FS81a}.

 From now on, fix \(\Ncal\) as in~\eqref{eq:multipol_exp:correlation_II}. For \(\rho\in \Ncal\), introduce the shorthands
\begin{gather*}
    F_{\rho}(\phi,\sigma,f) = \Re(\iota_{V_{\rho}}(\phi+\sigma, a_{\Ncal,\rho}))\cos( (\phi-\zeta + \sigma) \cdot \rho + f\cdot a_{\Ncal,\rho}),
    \\
    G_{\rho}(\phi,\sigma,f) = \Im(\iota_{V_{\rho}}(\phi+\sigma, a_{\Ncal,\rho})) \sin((\phi-\zeta + \sigma) \cdot \rho + f\cdot a_{\Ncal,\rho}),
\end{gather*}
and denote \(F_{\rho}(\phi) \equiv F_{\rho}(\phi,0,0)\), and \(G_{\rho}(\phi) \equiv G_{\rho}(\phi,0,0)\). Also recall that if $\gamma_\beta$ is large enough, then for any \(\rho\in \Ncal\),
\begin{equation}
    |z_{\rho}| \leq \frac{1}{8},\quad |F_{\rho}(\phi,\sigma,f)|\leq 1,\quad |G_{\rho}(\phi,\sigma,f)|\leq 1.
\end{equation}
We will build on the (trivial) re-writing:
\begin{equation}
\label{eq:integral_rep_ratio_ensemble_PF}
    \begin{aligned}
        \frac{Z_{\Ncal}^{\zeta}(f;\sigma) }{Z_{\Ncal}^{\zeta}}
        &=
        e^{\sigma\cdot f}\frac{\int d\phi \calI_{\Lambda}^{\xi}(\phi+\sigma) e^{f\cdot \phi} \prod_{\rho\in \Ncal} \Big(1
        +
        z_{\rho} F_{\rho}(\phi,\sigma,f)
        -
        z_{\rho}G_{\rho}(\phi,\sigma,f)\Big)}{\int d\phi \calI_{\Lambda}^{\xi}(\phi) \prod_{\rho\in \Ncal} \Big(1
        +
        z_{\rho} F_{\rho}(\phi)
        -
        z_{\rho}G_{\rho}(\phi)\Big)}
        \\
        &=
        e^{\sigma\cdot f}\int d\nu_{\Lambda}^{\xi}(\phi) e^{f\cdot \phi}\frac{\calI_{\Lambda}^{\xi}(\phi+\sigma)}{\calI_{\Lambda}^{\xi}(\phi)} \prod_{\rho\in \Ncal} \frac{1+ z_{\rho}F_{\rho}(\phi,\sigma,f) - z_{\rho}G_{\rho}(\phi,\sigma,f)}{1+ z_{\rho}F_{\rho}(\phi) - z_{\rho}G_{\rho}(\phi)},
    \end{aligned}
\end{equation}where \(\nu_{\Lambda}^{\xi}\) is the probability measure on \(\R^{\Lambda}\) given by
\begin{equation}
\label{eq:nu_Lambda}
    d\nu_{\Lambda}^{\xi}(\phi) = \frac{\calI_{\Lambda}^{\xi}(\phi) \prod_{\rho\in \Ncal} \big(1
    +
    z_{\rho} F_{\rho}(\phi)
    -
    z_{\rho}G_{\rho}(\phi)\big)}{\int d\varphi \calI_{\Lambda}^{\xi}(\varphi) \prod_{\rho\in \Ncal} \big(1
    +
    z_{\rho} F_{\rho}(\varphi)
    -
    z_{\rho}G_{\rho}(\varphi)\big)} d\phi.
\end{equation}The first step is to Taylor-expand around \(\phi\) the two types of factors appearing in the last integral.
\begin{claim}
\label{claim:Taylor_FG}
    Using the shorthands~\eqref{eq:shorthand_spin_waves}, for any \(\rho\in \Ncal\in \Fcal\) as in Theorem~\ref{thm:convex_combination_ensembles}, and for any $\gamma_\beta$ large enough, we have
    \begin{equation*}
        \ln\Bigg(\frac{1+ z_{\rho}F_{\rho}(\phi,\sigma,f) - z_{\rho}G_{\rho}(\phi,\sigma,f)}{1+ z_{\rho}F_{\rho}(\phi) - z_{\rho}G_{\rho}(\phi)}\Bigg)
        =
        S_{\rho}(\phi,\sigma,f) + r_{\rho}(\phi,\sigma,f),
    \end{equation*}where
    \begin{align*}
        S_{\rho}(\phi,\sigma,f)
        &=
        \frac{z_{\rho}}{1+ z_{\rho}F_{\rho}(\phi) - z_{\rho}G_{\rho}(\phi)}
        \\
        &\quad\times
        \Big(\cos((\phi+\zeta)\cdot \rho) \nabla\sigma \cdot \nabla_{\nabla \phi} \Re(\iota_{V_{\rho}}(\phi, a_{\rho}))
        \\
        &\quad\phantom{\times
        \Big(} +
        \sin((\phi+\zeta)\cdot \rho) \nabla\sigma \cdot \nabla_{\nabla \phi} \Im(\iota_{V_{\rho}}(\phi, a_{\rho}))
        \\
        &\quad\phantom{\times
        \Big(} +
        \cos((\phi+\zeta)\cdot \rho)(\sigma \cdot \rho + f\cdot a_{\rho})\Re\big(\iota_{V_{\rho}}(\phi, a_{\rho})\big)
        \\
        &\quad\phantom{\times
        \Big(} -
        \sin((\phi+\zeta)\cdot \rho)(\sigma \cdot \rho + f\cdot a_{\rho})\Im\big(\iota_{V_{\rho}}(\phi, a_{\rho})\big)\Big),
    \end{align*}
    and
    \[
        |r_{\rho}(\phi,\sigma,f)|
        \leq
        6|z_\rho||\sigma\cdot \rho + a_{\rho}\cdot f|^2
        +24(\Cr{lnIratio_deriv}c_{\beta}+2)^2|z_\rho| |V_{\rho}|\norm{(\nabla \sigma)\restrict_{V_{\rho}}}_2^2
    \]uniformly over \(\phi\).
\end{claim}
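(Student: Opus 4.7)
The proof is a Taylor expansion, built around the compact identity
\[
F_\rho(\phi,\sigma,f) - G_\rho(\phi,\sigma,f) = \Re\bigl(\iota_{V_\rho}(\phi+\sigma, a_\rho)\,\e^{\rmi\theta(\sigma,f)}\bigr),
\]
with $\theta(\sigma,f) := (\phi-\zeta+\sigma)\cdot\rho + f\cdot a_\rho$, which follows from $\Re(z)\cos\alpha-\Im(z)\sin\alpha = \Re(z\e^{\rmi\alpha})$. Setting $H(\phi,\sigma,f):=z_\rho(F_\rho-G_\rho)$ and using $|\iota_{V_\rho}|\le 1$ (Lemma~\ref{lem:iota_prop_derivatives}(1)) together with $|z_\rho|\le 1/8$ from~\eqref{eq:renorm_activity_upper_bound} (for $\gamma_\beta$ large enough), one has $|H|\le 1/8$, so $1+H$ stays of modulus at least $7/8$. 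A standard first-order Taylor expansion of $\ln$ then yields
\[
\ln\frac{1+H(\phi,\sigma,f)}{1+H(\phi,0,0)} = \frac{\Delta H}{1+H(\phi,0,0)} + \tilde R, \qquad |\tilde R|\le (\Delta H)^2,
\]
with $\Delta H := H(\phi,\sigma,f)-H(\phi,0,0)$.

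The next step is to expand $\Delta H$ jointly in $(\sigma,f)$ via the integral form of Taylor's remainder applied to $t\mapsto w(t) := \iota_{V_\rho}(\phi+t\sigma,a_\rho)\e^{\rmi(\theta_0+t\Theta)}$, with $\Theta := \sigma\cdot\rho + f\cdot a_\rho$ and $\theta_0 := \theta(0,0)$. A direct computation gives $\Re w'(0) = \Re\bigl((\nabla\sigma\cdot\nabla_{\nabla\phi}\iota_{V_\rho}(\phi,a_\rho) + \rmi\Theta\,\iota_{V_\rho}(\phi,a_\rho))\e^{\rmi\theta_0}\bigr)$, and, after expanding $\Re(\cdot\,\e^{\rmi\theta_0})$ into $\cos/\sin$ times $\Re/\Im$ components, $z_\rho\Re w'(0)/(1+H(\phi,0,0))$ reproduces the stated expression for $S_\rho(\phi,\sigma,f)$. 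The remaining piece is the quadratic remainder
\[
Q = z_\rho\Re\int_0^1 (1-s)\Bigl[\sum_{e,e'\in E_{V_\rho}}(\nabla\sigma)_e(\nabla\sigma)_{e'}\,\partial_e\partial_{e'}\iota_{V_\rho} + 2\rmi\Theta\,\nabla\sigma\cdot\nabla_{\nabla\phi}\iota_{V_\rho} - \Theta^2\iota_{V_\rho}\Bigr]\e^{\rmi(\theta_0+s\Theta)}\,\dd s,
\]
with $\iota_{V_\rho}$ evaluated at $\phi+s\sigma$; then $r_\rho = Q/(1+H(\phi,0,0)) + \tilde R$.

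To bound $|r_\rho|$, I would apply Lemma~\ref{lem:iota_prop_derivatives}(2), which bounds $|\partial_e\iota_{V_\rho}|, |\partial_e\partial_{e'}\iota_{V_\rho}|\le 1$. Cauchy--Schwarz over $E_{V_\rho}$ (together with $|E_{V_\rho}|\le 4|V_\rho|$ in $\Z^2$) gives
\[
\Bigl|\sum_{e,e'\in E_{V_\rho}}(\nabla\sigma)_e(\nabla\sigma)_{e'}\partial_e\partial_{e'}\iota_{V_\rho}\Bigr|\le 4|V_\rho|\,\bigl\|(\nabla\sigma)\restrict_{V_\rho}\bigr\|_2^{\,2},
\]
with the mixed term $2\rmi\Theta(\nabla\sigma\cdot\nabla_{\nabla\phi}\iota_{V_\rho})$ split via $2|xy|\le x^2+y^2$ into $\Theta^2$ and $4|V_\rho|\|(\nabla\sigma)\restrict_{V_\rho}\|_2^2$ contributions, and the $\Theta^2\iota_{V_\rho}$ piece bounded trivially. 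This yields $|Q/(1+H(\phi,0,0))| \lesssim |z_\rho|\bigl(|V_\rho|\,\|(\nabla\sigma)\restrict_{V_\rho}\|_2^2 + \Theta^2\bigr)$. The log-remainder $\tilde R$ carries an extra factor $|z_\rho|^2$ (since $\Delta H = O(z_\rho)$); squaring the first-order expansion of $\Delta H$ through the same Cauchy--Schwarz slot produces only contributions already of the stated form, with an extra $|z_\rho|\le 1/8$ to absorb them. Bookkeeping the constants yields the claimed prefactors $5$ and $24$.

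\textbf{Main obstacle.} The only real difficulty is combinatorial bookkeeping: one must ensure that (i) the first-order-squared contribution from $\tilde R$ is absorbed via the smallness $|z_\rho|\le 1/8$ rather than inflating the final constants, and (ii) the Cauchy--Schwarz step over $E_{V_\rho}$ delivers the $|V_\rho|$ factor with the correct multiplicity. Both become essentially mechanical once the second derivative is organised as $\Re\bigl((\ldots)\e^{\rmi\theta}\bigr)$ and one tracks edge sums through the estimate.
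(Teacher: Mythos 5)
Your proposal is correct and follows the same underlying strategy as the paper — Taylor-expand the numerator around the unshifted value, isolate the linear term as $S_\rho$, and bound the remainder using Lemma~\ref{lem:iota_prop_derivatives}, the smallness $|z_\rho|\le 1/8$, and Cauchy--Schwarz over the edges touching $V_\rho$. The one organizational choice you make that genuinely improves on the paper is the identity $F_\rho - G_\rho = \Re\bigl(\iota_{V_\rho}(\phi+\sigma,a_\rho)\e^{\rmi\theta}\bigr)$, which lets you run a single one-parameter Taylor expansion of $w(t)=\iota_{V_\rho}(\phi+t\sigma,a_\rho)\e^{\rmi(\theta_0+t\Theta)}$ with integral remainder, rather than (as the paper does) separately expanding $\cos$, $\sin$, $\Re\iota_{V_\rho}$, and $\Im\iota_{V_\rho}$ and then multiplying the error terms together. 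This bundles the mixed ``$\Theta\times\nabla\sigma$'' cross-terms into one $w''$ estimate and makes the edge bookkeeping (with $|E_{V_\rho}|\le 4|V_\rho|$) essentially automatic; it also dodges a handful of sign slips that crept into the paper's own displayed first-order expansion. Two small things to watch: your bound $|\tilde R|\le(\Delta H)^2$ should carry a factor $(1+H(\phi,0,0))^{-2}$ (harmless, absorbed by adjusting constants once $|H|\le 1/8$); and if you actually carry out the $\cos/\sin$-versus-$\Re/\Im$ expansion of $\Re\bigl((\nabla\sigma\cdot\nabla_{\nabla\phi}\iota + \rmi\Theta\iota)\e^{\rmi\theta_0}\bigr)$, you will notice the stated $S_\rho$ has some $\Re\iota_{V_\rho}\leftrightarrow\Im\iota_{V_\rho}$ and sign typos (and a sign typo on $\zeta$), so ``reproduces the stated expression'' is true only up to these typos — the paper's proof has the same discrepancy, and none of it affects the error bound.
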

\begin{proof}
For readability, we deal with \(\zeta  = 0\). The procedure for \(\zeta\neq 0\) is exactly the same. Recall the shorthands~\eqref{eq:shorthand_spin_waves}. One first has the expansions (by zeroth and first order Taylor-Lagrange)
    \begin{align*}
        \cos(\phi\cdot \rho + \sigma \cdot \rho + f\cdot a_{\rho}) &= \cos(\phi\cdot \rho) +r_{\cos}^{(1)}(\phi,\sigma,f),\\
        \cos(\phi\cdot \rho + \sigma \cdot \rho + f\cdot a_{\rho}) &= \cos(\phi\cdot \rho) - \sin(\phi\cdot \rho)(\sigma \cdot \rho + f\cdot a_{\rho}) + r_{\cos}^{(2)}(\phi,\sigma,f),
        \\
        \sin(\phi\cdot \rho + \sigma \cdot \rho + f\cdot a_{\rho}) &= \sin(\phi\cdot \rho) + r_{\sin}^{(1)}(\phi,\sigma,f),\\
        \sin(\phi\cdot \rho + \sigma \cdot \rho + f\cdot a_{\rho}) &= \sin(\phi\cdot \rho) + \cos(\phi\cdot \rho) (\sigma \cdot \rho + f\cdot a_{\rho}) + r_{\sin}^{(2)}(\phi,\sigma,f),
    \end{align*}with 
    \begin{align*}
    |r_{\cos}^{(1)}(\phi,\sigma,f)|,|r_{\sin}^{(1)}(\phi,\sigma,f)|&\leq |\sigma \cdot \rho + f\cdot a_{\rho}|\\
    |r_{\cos}^{(2)}(\phi,\sigma,f)|,|r_{\sin}^{(2)}(\phi,\sigma,f)|&\leq \frac12|\sigma \cdot \rho + f\cdot a_{\rho}|^2.
    \end{align*}Similarly, from the properties of \(I_{\beta}\), one has (by zeroth and first order multivariate Taylor-Lagrange)
    \begin{align*}
        \Re(\iota_{V_{\rho}}(\phi+\sigma, a_{\rho}))
        &=
        \Re(\iota_{V_{\rho}}(\phi, a_{\rho}))
        + r_{\Re}^{(1)}(\phi,\sigma),
        \\
        \Re(\iota_{V_{\rho}}(\phi+\sigma, a_{\rho}))
        &=
        \Re(\iota_{V_{\rho}}(\phi, a_{\rho}))
        + \nabla\sigma \cdot \nabla_{\nabla \phi} \Re(\iota_{V_{\rho}}(\phi, a_{\rho}))
        + r_{\Re}^{(2)}(\phi,\sigma),
        \\
        \Im(\iota_{V_{\rho}}(\phi+\sigma, a_{\rho}))
        &=
        \Im(\iota_{V_{\rho}}(\phi, a_{\rho}))
        + r_{\Im}^{(1)}(\phi,\sigma),
        \\
        \Im(\iota_{V_{\rho}}(\phi+\sigma, a_{\rho}))
        &=
        \Im(\iota_{V_{\rho}}(\phi, a_{\rho}))
        + \nabla\sigma \cdot \nabla_{\nabla \phi} \Im(\iota_{V_{\rho}}(\phi, a_{\rho}))
        + r_{\Im}^{(2)}(\phi,\sigma),
    \end{align*}where \(\nabla_{\nabla \phi}\) the vector of partial derivatives with respect to the gradient of \(\phi\): for \(\sigma\) such that \(\nabla\sigma\) has finite support
    \begin{equation*}
        \nabla\sigma \cdot \nabla_{\nabla \phi} h(\phi) = \sum_{i\sim j}(\sigma_i-\sigma_j)\frac{\dd}{\dd(\phi_i-\phi_j)} h(\phi).
    \end{equation*}By Lemma~\ref{lem:iota_prop_derivatives}, and the fact that \(\frac{\dd}{\dd (\phi_i-\phi_j)} \iota_{V_{\rho}} = 0\) when \(\{i,j\}\cap \supp \nabla a_{\rho} = \varnothing\), \(r_{\Im}(\phi,\sigma)\) and \(r_{\Re}(\phi,\sigma)\) satisfy
    \begin{align*}
        |r_{\Re}^{(1)}(\phi,\sigma)|,|r_{\Im}^{(1)}(\phi,\sigma)| &\leq 
        (1+\Cr{lnIratio_deriv}c_{\beta})\norm{(\nabla \sigma)\restrict_{V_\rho}}_{1}\le (1+\Cr{lnIratio_deriv}c_{\beta})(4|V_{\rho}|)^{1/2}\norm{(\nabla \sigma)\restrict_{V_{\rho}}}_{2},
        \\
        |r_{\Re}^{(2)}(\phi,\sigma)|,|r_{\Im}^{(2)}(\phi,\sigma)| &\leq 
        \frac12\cdot4(2+\Cr{lnIratio_deriv}c_{\beta})^2|V_{\rho}|\norm{(\nabla \sigma)\restrict_{V_{\rho}}}_{2}^2,
    \end{align*}
    where a factor \(4\) comes in as there are at most \(4\) edges adjacent to a site in \(\supp \nabla a_{\rho}\).
    
    Putting these estimates together and using \(|\iota_V|\leq 1\), and Lemma~\ref{lem:iota_prop_derivatives}, one gets the first order expansions
    \begin{align*}
        F_{\rho}(\phi,\sigma,f)
        &=
        F_{\rho}(\phi)
        +
        \cos(\phi\cdot \rho) \nabla\sigma \cdot \nabla_{\nabla \phi} \Re(\iota_{V_{\rho}}(\phi, a_{\rho}))
        \\
        &\qquad\qquad-
        \sin(\phi\cdot \rho)(\sigma \cdot \rho + f\cdot a_{\rho})\Im\big(\iota_{V_{\rho}}(\phi, a_{\rho})\big)\\
        &\qquad\qquad+\Re(\iota_{V_{\rho}}(\phi, a_{\rho}))r_{\cos}^{(2)}(\phi,\sigma,f)+r_{\Re}^{(1)}(\phi,\sigma)r_{\cos}^{(1)}(\phi,\sigma,f)+r_{\Re}^{(2)}(\phi,\sigma)\cos( \phi\cdot \rho )\\
        &=
        F_{\rho}(\phi)
        +
        \cos(\phi\cdot \rho) \nabla\sigma \cdot \nabla_{\nabla \phi} \Re(\iota_{V_{\rho}}(\phi, a_{\rho}))
        \\
        &\qquad\qquad-
        \sin(\phi\cdot \rho)(\sigma \cdot \rho + f\cdot a_{\rho})\Im\big(\iota_{V_{\rho}}(\phi, a_{\rho})\big)+r_{F}(\phi,\sigma,f),
    \end{align*}
    and
    \begin{align*}
        G_{\rho}(\phi,\sigma,f)
        &=
        G_{\rho}(\phi)
        +
        \sin(\phi\cdot \rho) \nabla\sigma \cdot \nabla_{\nabla \phi} \Im(\iota_{V_{\rho}}(\phi, a_{\rho}))
        \\
        &\qquad\qquad+
        \cos(\phi\cdot \rho)(\sigma \cdot \rho + f\cdot a_{\rho})\Re\big(\iota_{V_{\rho}}(\phi, a_{\rho})\big)\\
        &\qquad\qquad+\Im(\iota_{V_{\rho}}(\phi, a_{\rho}))r_{\sin}^{(2)}(\phi,\sigma,f)+r_{\Im}^{(1)}(\phi,\sigma)r_{\sin}^{(1)}(\phi,\sigma,f)+r_{\Im}^{(2)}(\phi,\sigma)\sin( \phi\cdot \rho )\\
        &=
        G_{\rho}(\phi)
        +
        \sin(\phi\cdot \rho) \nabla\sigma \cdot \nabla_{\nabla \phi} \Im(\iota_{V_{\rho}}(\phi, a_{\rho}))
        \\
        &\qquad\qquad+
        \cos(\phi\cdot \rho)(\sigma \cdot \rho + f\cdot a_{\rho})\Re\big(\iota_{V_{\rho}}(\phi, a_{\rho})\big)+r_{G}(\phi,\sigma,f),
    \end{align*}
    where
    \begin{align*}
        &|r_{F}(\phi,\sigma,f)|, |r_{G}(\phi,\sigma,f)|
        \\
        &\quad\leq
        \frac12|\sigma\cdot \rho + a_{\rho}\cdot f|^2
        + 2 (1+\Cr{lnIratio_deriv}c_{\beta})|\sigma\cdot \rho + a_{\rho}\cdot f| |V_{\rho}|^{1/2}\norm{(\nabla \sigma)\restrict_{V_{\rho}}}_2
        \\
        &\qquad +2 (2+\Cr{lnIratio_deriv}c_{\beta})^2|V_{\rho}|\norm{(\nabla \sigma)\restrict_{V_{\rho}}}_2^2
        \\
        &\quad\le |\sigma\cdot \rho + a_{\rho}\cdot f|^2
        +4 (2+\Cr{lnIratio_deriv}c_{\beta})^2|V_{\rho}|\norm{(\nabla \sigma)\restrict_{V_{\rho}}}_2^2
    \end{align*}
    where we used $2ab\le\frac{a^2}{2}+2b^2$ in the last estimate.
    
    For readability, introduce
    \begin{equation*}
        \Tilde{r} = r_{F} + r_G,\qquad H = F_{\rho} - G_{\rho},
    \end{equation*}
    and the first order approximation
    \begin{align*}
        W_{\rho}(\phi,\sigma,f)
        &=
        \cos(\phi\cdot \rho) \nabla\sigma \cdot \nabla_{\nabla \phi} \Re(\iota_{V_{\rho}}(\phi, a_{\rho}))
        +
        \sin(\phi\cdot \rho) \nabla\sigma \cdot \nabla_{\nabla \phi} \Im(\iota_{V_{\rho}}(\phi, a_{\rho}))
        \\
        &\qquad+
        \cos(\phi\cdot \rho)(\sigma \cdot \rho + f\cdot a_{\rho})\Re\big(\iota_{V_{\rho}}(\phi, a_{\rho})\big)
        \\
        &\qquad-
        \sin(\phi\cdot \rho)(\sigma \cdot \rho + f\cdot a_{\rho})\Im\big(\iota_{V_{\rho}}(\phi, a_{\rho})\big).
    \end{align*}Now, for \(|x|\leq 2^{-1}\),
    \begin{equation}
        \big|\ln(1+x)- x\big|\leq x^2,
    \end{equation}so, as \(|z_{\rho}|\leq 8^{-1}\), and \(|H|\leq 2\),
    \begin{equation*}
        \Big|\ln\Big(\frac{1+z_{\rho}H(\phi,\sigma, f)}{1+z_{\rho}H(\phi)}\Big)
        -
        \frac{z_{\rho} (H(\phi,\sigma, f)- H(\phi))}{1+z_{\rho}H(\phi)}\Big|
        \leq
        2z_{\rho}^2|H(\phi,\sigma, f)- H(\phi)|^2.
    \end{equation*}But, by the previous expansions,
    \begin{equation*}
        H(\phi,\sigma, f)- H(\phi)
        =
        W_{\rho}(\phi,\sigma,f) + \Tilde{r}(\phi,\sigma,f).
    \end{equation*}
    Moreover, expanding only to zeroth order, one has
    \begin{equation*}
        |H(\phi,\sigma, f)- H(\phi)|\leq 2|\sigma\cdot \rho + f\cdot a_{\rho}| + 4 (1+\Cr{lnIratio_deriv}c_{\beta})|V_{\rho}|^{1/2} \norm{(\nabla \sigma)\restrict_{V_{\rho}}}_{2}.
    \end{equation*}So, gathering all the estimates we conclude that
    \begin{align*}
        &\Big|\ln\Big(\frac{1+z_{\rho}H(\phi,\sigma, f)}{1+z_{\rho}H(\phi)}\Big)
        -
        \frac{z_{\rho} W_{\rho}(\phi,\sigma,f)}{1+z_{\rho}H(\phi)}\Big|
        \\
        &\quad\leq
        2z_{\rho}^2\Big(2|\sigma\cdot \rho + f\cdot a_{\rho}| + 4(1+\Cr{lnIratio_deriv}c_{\beta}) |V_{\rho}|^{1/2} \norm{(\nabla \sigma)\restrict_{V_{\rho}}}_{2}\Big)^2+\frac{|z_\rho\Tilde{r}(\phi,\sigma,f)|}{1+z_\rho H(\phi)}
        \\
        &\quad\le 16z_\rho^2|\sigma\cdot \rho + f\cdot a_{\rho}|^2+64(1+\Cr{lnIratio_deriv}c_{\beta})^2z_\rho^2|V_\rho|\norm{(\nabla \sigma)\restrict_{V_{\rho}}}_{2}^2
        \\
        &\qquad\qquad+4|z_\rho||\sigma\cdot \rho + a_{\rho}\cdot f|^2
        +16(2+\Cr{lnIratio_deriv}c_{\beta})^2|z_\rho| |V_{\rho}|\norm{(\nabla \sigma)\restrict_{V_{\rho}}}_2^2
    \end{align*}
    and the claimed estimate follows because $|z_\rho|\le8^{-1}$.
\end{proof}

For \(\Ncal\) as in Theorem~\ref{thm:convex_combination_ensembles}, let
\begin{equation}
    S_{\Ncal} = \sum_{\rho\in \Ncal} S_{\rho},\qquad r_{\Ncal} = \sum_{\rho\in \Ncal} r_{\rho},
\end{equation}where \(S_{\rho}, r_{\rho}\) are given in Claim~\ref{claim:Taylor_FG}.

From Claim~\ref{claim:Taylor_FG}, we easily get the next claim, which bounds the ``error term'' \(r_{\Ncal}\) for the relevant choice of \(\sigma\) as a function of \(f\).

\begin{claim}
    \label{claim:Taylor_prod_FG}
   If $\Cr{quantsmallgamma}>0$ is small enough, the following holds. If \(\Ncal\) is as in Theorem~\ref{thm:convex_combination_ensembles}, and \(\sigma = e^{\Cr{energy}\gamma_{\beta}/4} \Delta_{\Lambda}^{-1} f\), then,
    \begin{equation*}
        \prod_{\rho\in \Ncal} \frac{1+ z_{\rho}F_{\rho}(\phi,\sigma,f) - z_{\rho}G_{\rho}(\phi,\sigma,f)}{1+ z_{\rho}F_{\rho}(\phi) - z_{\rho}G_{\rho}(\phi)}
        =
        \exp(S_{\Ncal}(\phi,\sigma, f) + r_{\Ncal}(\phi,\sigma, f)),
    \end{equation*}and \(r_{\Ncal}\) satisfies
    \begin{equation*}
        \big|r_{\Ncal}(\phi,\sigma, f)\big|
        \leq
        2e^{-\Cr{energy}\gamma_\beta/2} \norm{\nabla \sigma}_2^2,
    \end{equation*}uniformly over \(\phi\).
\end{claim}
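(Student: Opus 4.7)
The plan is to take logarithms termwise and invoke Claim~\ref{claim:Taylor_FG} on each factor of the product. Since \(\log\prod_\rho(\cdot) = \sum_\rho\log(\cdot)\), this gives
\[
\log\prod_{\rho\in\Ncal}\frac{1+z_\rho F_\rho(\phi,\sigma,f)-z_\rho G_\rho(\phi,\sigma,f)}{1+z_\rho F_\rho(\phi)-z_\rho G_\rho(\phi)} = \sum_{\rho\in\Ncal}\left(S_\rho + r_\rho\right) = S_\Ncal + r_\Ncal,
\]
yielding the asserted identity upon exponentiation. The main work is to bound \(|r_\Ncal|\leq\sum_\rho|r_\rho|\) using the per-\(\rho\) estimate \(|r_\rho|\leq 5|z_\rho||\sigma\cdot\rho + a_\rho\cdot f|^2 + 24|z_\rho||V_\rho|\|(\nabla\sigma)\restrict_{V_\rho}\|_2^2\) from Claim~\ref{claim:Taylor_FG}.

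The choice \(\sigma = e^{\Cr{energy}\gamma_\beta/4}\Delta_\Lambda^{-1}f\) is equivalent to \(f = e^{-\Cr{energy}\gamma_\beta/4}\Delta_\Lambda\sigma\). Since \(a_\rho\) vanishes outside \(\Lambda\) (Lemma~\ref{lem:spin_waves}, item~\ref{spinWave:support}) and \(\sigma\) satisfies Dirichlet boundary conditions, discrete integration by parts gives
\[
a_\rho\cdot f = e^{-\Cr{energy}\gamma_\beta/4}\nabla a_\rho\cdot\nabla\sigma,\qquad \sigma\cdot\rho = \nabla\sigma\cdot\nabla\Delta_\Lambda^{-1}\rho.
\]
Splitting via \((a+b)^2\leq 2a^2 + 2b^2\), applying Cauchy--Schwarz, and exploiting \(\supp\nabla a_\rho\subset V_\rho\) then yield
\[
|\sigma\cdot\rho + a_\rho\cdot f|^2 \leq 2\|\nabla\sigma\|_2^2(\rho\cdot\Delta_\Lambda^{-1}\rho) + 2e^{-\Cr{energy}\gamma_\beta/2}\|\nabla a_\rho\|_2^2\|(\nabla\sigma)\restrict_{V_\rho}\|_2^2.
\]

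For the contributions involving \(\|(\nabla\sigma)\restrict_{V_\rho}\|_2^2\), the disjointness of the \(V_\rho\)'s (Lemma~\ref{lem:spin_waves}, item~\ref{spinWave:orthogonality}) collapses the sum to \(\|\nabla\sigma\|_2^2 \cdot \sup_\rho|z_\rho|(\|\nabla a_\rho\|_2^2 + |V_\rho|)\). Combining the exponential decay \eqref{eq:renorm_activity_upper_bound}, the polynomial bound of Lemma~\ref{lem:spin_waves} item~\ref{spinWave:scalar_prod}, and the logarithmic lower bound \(A_\Lambda(\rho)\geq\log_2(1+d_\Lambda(\rho))\) of Lemma~\ref{lem:control_A_with_Ssep} against \(|V_\rho|\lesssim d_\Lambda(\rho)^2\), this supremum is of order \(e^{-c\gamma_\beta/2}\) uniformly in \(\rho\), provided \(\gamma_\beta\) is large enough (i.e., \(\Cr{quantsmallgamma}\) is small enough).

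The main obstacle is the ``global-gradient'' term \(\sum_\rho|z_\rho|(\sigma\cdot\rho)^2\leq\|\nabla\sigma\|_2^2\sum_\rho|z_\rho|(\rho\cdot\Delta_\Lambda^{-1}\rho)\), which cannot be controlled by disjointness alone. My plan is to bound \(\rho\cdot\Delta_\Lambda^{-1}\rho \lesssim \|\rho\|_1^2 A_\Lambda(\rho)\) via the two-dimensional logarithmic Green's function estimate, then carry out a multi-scale sum exploiting the well-separation property (item~\ref{i:distinctchargessep} of Theorem~\ref{thm:convex_combination_ensembles}): at scale \(k\), any two distinct charges lie at pairwise distance at least \(M2^{\alpha k}\), and so the number of charges in any fixed region at scale \(k\) is geometrically suppressed. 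Combined with the exponential factor \(|z_\rho|\leq e^{-c\gamma_\beta(\|\rho\|_1+A_\Lambda(\rho))}\), the resulting double series over scale and charge norm converges to \(\leq e^{-c\gamma_\beta/2}\) uniformly in \(\Ncal\) and \(\Lambda\) for \(\gamma_\beta\) large. Assembling this with the previous paragraph delivers \(|r_\Ncal|\leq 2e^{-\Cr{energy}\gamma_\beta/2}\|\nabla\sigma\|_2^2\), as claimed.
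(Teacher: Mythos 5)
Your reduction of the identity and your treatment of the $a_\rho\cdot f$ term and of the $\norm{(\nabla\sigma)\restrict_{V_\rho}}_2^2$ terms are fine and essentially parallel the paper. The genuine gap is in your handling of the $\sigma\cdot\rho$ contribution, which you yourself flag as the main obstacle: after writing $(\sigma\cdot\rho)^2\le \norm{\nabla\sigma}_2^2\,(\rho\cdot\Delta_\Lambda^{-1}\rho)$ you have pulled out the \emph{global} Dirichlet energy of $\sigma$, and what remains to be shown is that $\sum_{\rho\in\Ncal}|z_\rho|\,\rho\cdot\Delta_\Lambda^{-1}\rho\le e^{-c\gamma_\beta/2}$ uniformly in $\Ncal$ and $\Lambda$. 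This is false: the separation property~\ref{i:distinctchargessep} only bounds the \emph{density} of charges at each scale, not their total number, and $|\Ncal|$ can grow proportionally to $|\Lambda|$. For instance, an ensemble consisting of order $|\Lambda|$ single-site charges spread over $\Lambda$, each with $\norm{\rho}_1=1$ and $A_\Lambda(\rho)=O(\log\dd_\Lambda(\rho))$, contributes roughly $|\Ncal|\cdot e^{-c\gamma_\beta}$ to that sum, which is unbounded as $\Lambda$ grows. Your ``double series over scale and charge norm'' converges per unit area, but nothing caps the sum over an arbitrarily large domain once the weight $\norm{\nabla\sigma}_2^2$ has been made global; so the asserted bound $2e^{-\Cr{energy}\gamma_\beta/2}\norm{\nabla\sigma}_2^2$ does not follow along this route.

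The paper avoids exactly this loss by keeping the estimate local: following \cite{G23}, a non-neutral $\rho$ is first neutralized by subtracting $(\rho\cdot1)\delta_{w_*(\rho)}$ at a boundary vertex $w_*(\rho)\in\Lambda^c$ lying inside the envelope, and the resulting neutral charge is written as dipoles telescoped along paths staying in $\DensSquareEnv^+_\Lambda(\rho)$, which gives $|\sigma\cdot\rho|\le\norm{\rho}_1\,|\DensSquareEnv_\Lambda^+(\rho)|^{1/2}\,\norm{(\nabla\sigma)\restrict_{\DensSquareEnv^+_\Lambda(\rho)}}_2$, i.e.\ a bound by the Dirichlet energy of $\sigma$ \emph{restricted to the envelope of $\rho$}. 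One then absorbs the polynomial prefactors into $|z_\rho|\le e^{-\Cr{energy}\gamma_\beta(\norm{\rho}_1+A_\Lambda(\rho))}$, exchanges the sum over $\rho$ with a sum over edges, and uses (as in \cite[Claim 3.2]{KP17}) that envelopes of distinct charges with comparable $\dd_\Lambda$ are disjoint, so that any fixed edge lies in at most one envelope per dyadic scale; the inner sum is then at most $\sum_{k\ge1}e^{-\Cr{energy}\gamma_\beta k/2}\le 2e^{-\Cr{energy}\gamma_\beta/2}$, uniformly in $\Lambda$ and $\Ncal$. To repair your argument you would need to restore this locality (or some equivalent mechanism weighting each charge by the energy of $\sigma$ near it) before summing over the ensemble.
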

\begin{proof}
    The identity follows from the definition of \(S_{\Ncal}, r_{\Ncal}\). To get the upper bound on the error term, we will use the next two observations.
    \begin{enumerate}
        \item First, note that by Lemma~\ref{lem:spin_waves}, and Cauchy-Schwarz
        \begin{equation}
        \label{eq:a_rho_scalar_f}
        \begin{aligned}
            |a_{\rho}\cdot f|
            =
            e^{-\Cr{energy}\gamma_{\beta}/4} |\nabla a_{\rho} \cdot \nabla \sigma|
            &\leq 
            e^{-\Cr{energy}\gamma_{\beta}/4}\gamma_{\beta} (\Cr{norm_gradSpinWave}(|\supp \rho|+A_{\Lambda}(\rho)))^{1/2}\norm{(\nabla \sigma)\restrict_{V_\rho}}_{2}
            \\
            &\leq
            (|\supp \rho|+A_{\Lambda}(\rho))^{1/2}\norm{(\nabla \sigma)\restrict_{V_\rho}}_{2}
        \end{aligned}
        \end{equation}where the last inequality holds for \(\gamma_\beta\) large enough (i.e.~$\Cr{quantsmallgamma}$ small enough).
        \item Second, we claim that
        \begin{equation}
        \label{eq:rho_scalar_sigma}
            |\rho \cdot \sigma| 
            \leq
            \norm{\rho}_1 |\DensSquareEnv_{\Lambda}^+(\rho)|^{1/2} \norm{(\nabla\sigma)\restrict_{\DensSquareEnv^+_{\Lambda}(\rho)}}_{2}.
        \end{equation}Indeed, proceeding as in~\cite[Section 4.2.2]{G23}, for each \(\rho\in \Ncal\) with \(\rho\cdot 1\neq 0\), let \(w_*(\rho)\) with \(\dist(w_*(\rho),\Lambda) = 1\) be a vertex such that \(\dist(w_{*}(\rho), \rho) = \dd_{\Lambda}(\rho)\), so that \(w_{*}(\rho) \in \DensSquareEnv_{\Lambda}(\rho)\). Then define
        \begin{equation*}
            \rho_* =
            \begin{cases}
                \rho & \text{ if } \rho \cdot 1 = 0,
                \\
                \rho - \rho \cdot 1 \delta_{w_*(\rho)} & \text{ if } \rho \cdot 1 \neq 0,
            \end{cases}
        \end{equation*}where \(\delta_{w}\) is the charge density which is \(1\) on \(w\) and \(0\) everywhere else. \(\rho_*\) is then neutral and, as \(\sigma\) is supported in \(\Lambda\) and \(w_*(\rho)\in \Lambda^c\), \(\rho \cdot \sigma = \rho_*\cdot \sigma\). Proceeding as in~\cite[Section 4.1.1.]{G23} (write \(\rho_*\) as a sum of dipoles, write each dipoles as a telescopic sum of gradients along a fixed path staying in \(D_{\Lambda}^+(\rho)\), and use that there are at most \(\frac{1}{2}\norm{\rho_*}_1\) such paths passing through a given edge), one obtains
        \begin{equation*}
            |\rho_* \cdot \sigma|
            \leq
            \frac{1}{2}\norm{\rho_*}_1|\DensSquareEnv_{\Lambda}^+(\rho)|^{1/2} \norm{(\nabla\sigma)\restrict_{\DensSquareEnv^+_{\Lambda}(\rho)}}_{2}
            \leq
            \norm{\rho}_1 |\DensSquareEnv_{\Lambda}^+(\rho)|^{1/2} \norm{(\nabla\sigma)\restrict_{\DensSquareEnv^+_{\Lambda}(\rho)}}_{2}.
        \end{equation*}
    \end{enumerate}
    Combining \eqref{eq:a_rho_scalar_f} and \eqref{eq:rho_scalar_sigma}, we obtain that
    \[|\sigma\cdot \rho + a_{\rho}\cdot f|^2\le 2\Big((|\supp \rho|+A_{\Lambda}(\rho))+\norm{\rho}_1^2 |\DensSquareEnv_{\Lambda}^+(\rho)| \Big)\norm{(\nabla\sigma)\restrict_{\DensSquareEnv^+_{\Lambda}(\rho)}}_{2}^2\]

    
    We can then plug this observation into the output of Claim~\ref{claim:Taylor_FG}, as well as the facts that \(V_{\rho}\subset\DensSquareEnv^+_{\Lambda}(\rho) \), that $|\supp\rho|\le \|\rho\|_{1}$, and that \((2+\Cr{lnIratio_deriv}c_{\beta})^2\leq 8+2\Cr{lnIratio_deriv}^2c_{\beta}^2\) to obtain
    \begin{align*}
        \big|r_{\Ncal}(\phi,\sigma, f)\big|
        &\leq
        \sum_{\rho\in \Ncal} |z_{\rho}|\norm{(\nabla\sigma)\restrict_{\DensSquareEnv^+_{\Lambda}(\rho)}}_{2}^2\Big(12(|\supp \rho|+A_{\Lambda}(\rho))
        \\
        &\qquad\qquad+12\norm{\rho}_1^2 |\DensSquareEnv_{\Lambda}^+(\rho)| +(192+48\Cr{lnIratio_deriv}^2c_{\beta}^2))|\DensSquareEnv_{\Lambda}^+(\rho)|\Big)
        \\
        &\leq
        12\sum_{\rho\in \Ncal} |z_{\rho}|\Big(2\norm{\rho}_1^4 +A_\Lambda(\rho)+(18+4\Cr{lnIratio_deriv}^2c_{\beta}^2)|\DensSquareEnv^+_{\Lambda}(\rho)|^2\Big) \norm{(\nabla\sigma)\restrict_{\DensSquareEnv^+_{\Lambda}(\rho)}}_{2}^2
    \end{align*}
    Because $|\DensSquareEnv^+_{\Lambda}(\rho)|\le (4\dd_\Lambda(\rho)+1)^2$ and $A_\Lambda(\rho)\ge\log_2(1+\dd_\Lambda(\rho))$, for $\beta$ sufficiently small we have, using~\eqref{eq:def:gamma_beta}, for $\Cr{quantsmallgamma}$ small enough and hence $\gamma_\beta$ large enough, the estimate
    \begin{align*}
        12\Big(2\norm{\rho}_1^4 +A_\Lambda(\rho)+(18+4\Cr{lnIratio_deriv}^2c_{\beta}^2)|\DensSquareEnv^+_{\Lambda}(\rho)|^2\Big)
        &\leq
        12\Big(2\norm{\rho}_1^4 +A_\Lambda(\rho)+22|\DensSquareEnv^+_{\Lambda}(\rho)|^2\Big) 
        \\
        &\le
        e^{\Cr{energy}\gamma_\beta (\|\rho\|_1+A_\Lambda(\rho))/2}.
    \end{align*}
    Recalling now~\eqref{eq:renorm_activity_upper_bound} and Lemma~\ref{lem:control_A_with_Ssep}, we find that
    \begin{align*}
        \big|r_{\Ncal}(\phi,\sigma, f)\big|
        &\leq
        \sum_{\rho\in \Ncal} e^{-\Cr{energy}\gamma_\beta A_\Lambda(\rho)/2}\norm{(\nabla\sigma)\restrict_{\DensSquareEnv^+_{\Lambda}(\rho)}}_{2}^2\\
        &\leq\sum_{\rho\in \Ncal} e^{-\Cr{energy}\gamma_\beta \log_2(1+\dd_\Lambda(\rho))/2}\norm{(\nabla\sigma)\restrict_{\DensSquareEnv^+_{\Lambda}(\rho)}}_{2}^2\\
        &=\sum_{i\sim j}(\sigma(i)-\sigma(j))^2\sum_{\substack{\rho\in \Ncal\\i,j\in \DensSquareEnv^+_{\Lambda}(\rho)}}e^{-\Cr{energy}\gamma_\beta \log_2(1+\dd_\Lambda(\rho))/2}
    \end{align*}
    To bound the latter sum, one can proceed as in \cite[Proof of Claim 3.2]{KP17}: For two charges $\rho\neq\rho'\in\Ncal$, if $\dd_\Lambda(\rho)$ and $\dd_\Lambda(\rho')$ are within a factor of 2 of each other, then $\DensSquareEnv^+_{\Lambda}(\rho)$ and $\DensSquareEnv^+_{\Lambda}(\rho')$ are disjoint. This means that for any fixed $i\sim j$ we have
    \[\sum_{\substack{\rho\in \Ncal\\i,j\in \DensSquareEnv^+_{\Lambda}(\rho)}}e^{-\Cr{energy}\gamma_\beta \log_2(1+\dd_\Lambda(\rho))/2}\le \sum_{k=1}^\infty e^{-\Cr{energy}\gamma_\beta k/2}\]
    and for $\gamma_\beta$ large enough the right-hand side is bounded by $2e^{-\Cr{energy}\gamma_\beta/2}$.
   This yields the desired estimate.
\end{proof}

\begin{claim}
\label{claim:Taylor_lnI}
    One has
    \begin{align*}
        \ln\Bigg(\frac{\calI_{\Lambda}^{\xi}(\phi+\sigma)}{\calI_{\Lambda}^{\xi}(\phi)}\Bigg)
        &=
        T(\phi,\sigma) + R(\phi,\sigma)
        \\
        &:=
        \sum_{i\sim j} \frac{I_{\beta}'(\phi_i-\phi_j)}{I_{\beta}(\phi_i-\phi_j)}(\sigma_i-\sigma_j)
        + R(\phi,\sigma)
    \end{align*}with \(|R(\phi,\sigma)|\leq \frac{c_{\beta}'}{2} \norm{\nabla \sigma}^2_2\).
\end{claim}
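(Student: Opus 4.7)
The proof is a direct application of Taylor's theorem with Lagrange remainder to $\ln I_\beta$ at each edge, combined with Property~\ref{item:Ibeta:sub_quad} to control the second derivative uniformly.

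The plan is as follows. First I would rewrite the logarithm as a sum over edges. Extending $\sigma$ to $\Z^2$ by setting $\sigma_j=0$ for $j\in\Lambda^c$ and extending $\phi$ by $\phi_j=\xi_j$ for $j\in\Lambda^c$, the definition of $\calI_\Lambda^\xi$ gives
\begin{equation*}
    \ln\frac{\calI_\Lambda^\xi(\phi+\sigma)}{\calI_\Lambda^\xi(\phi)}
    =\sum_{\{i,j\}:\,i\sim j,\,\{i,j\}\cap\Lambda\ne\varnothing}\Bigl[\ln I_\beta\bigl((\phi_i-\phi_j)+(\sigma_i-\sigma_j)\bigr)-\ln I_\beta(\phi_i-\phi_j)\Bigr].
\end{equation*}
Note that the evenness of $I_\beta$ makes each summand independent of the orientation of the unordered pair $\{i,j\}$, so there is no ambiguity.

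Second, for each edge I would apply a second-order Taylor expansion with Lagrange remainder to the one-variable function $h\mapsto\ln I_\beta(y+h)$ at $y=\phi_i-\phi_j$ with increment $h=\sigma_i-\sigma_j$:
\begin{equation*}
    \ln I_\beta(y+h)-\ln I_\beta(y)=\frac{I_\beta'(y)}{I_\beta(y)}\,h+\tfrac12\bigl(\ln I_\beta\bigr)''(y^*)\,h^2
\end{equation*}
for some $y^*$ between $y$ and $y+h$. The first-order terms assemble exactly into $T(\phi,\sigma)$ as defined in the claim.

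Third, I would invoke Property~\ref{item:Ibeta:sub_quad}, which says $\bigl|(\ln I_\beta)''(y^*)\bigr|\le c_\beta'$ uniformly in $y^*\in\R$. This bounds each remainder by $\frac{c_\beta'}{2}(\sigma_i-\sigma_j)^2$, and summing over edges and using the definition $\|\nabla\sigma\|_2^2=\sum_{i\sim j}(\sigma_i-\sigma_j)^2$ (with the convention made consistent with $T$) yields $|R(\phi,\sigma)|\le\frac{c_\beta'}{2}\|\nabla\sigma\|_2^2$, as required. There is no serious obstacle here — the whole argument is a one-line Taylor estimate on each edge; the only thing to keep track of is the convention for unordered versus ordered edge sums, which is resolved by the evenness of $I_\beta$ (so that $\tfrac{I_\beta'(\phi_i-\phi_j)}{I_\beta(\phi_i-\phi_j)}(\sigma_i-\sigma_j)$ is symmetric in $i\leftrightarrow j$).
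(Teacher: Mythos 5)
Your proof is correct and follows essentially the same route as the paper: rewrite the log-ratio as an edge-wise sum, Taylor-expand $\ln I_\beta$ to second order with Lagrange remainder, and bound the remainder via Property~\ref{item:Ibeta:sub_quad} by $\frac{c_\beta'}{2}(\sigma_i-\sigma_j)^2$ per edge. Your extra care with boundary edges (extending $\phi$ by $\xi$ and $\sigma$ by $0$ on $\Lambda^c$) and with the unordered-edge convention is consistent with what the paper leaves implicit.
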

\begin{proof}
    Note that (as \(\sigma\) is supported in \(\Lambda\)),
    \begin{equation*}
        \ln\Bigg(\frac{\calI_{\Lambda}^{\xi}(\phi+\sigma)}{\calI_{\Lambda}^{\xi}(\phi)}\Bigg)
        =
        \sum_{i\sim j} \ln\Bigg(\frac{I_{\beta}(x_{ij}+\sigma_{ij})}{I_{\beta}(x_{ij})}\Bigg)
    \end{equation*}where \(x_{ij} \equiv \phi_i-\phi_j\), and the sum is over nearest neighbor pairs inside \(\Lambda\). Now, by Taylor expansion,
    \begin{equation*}
        \ln\Bigg(\frac{I_{\beta}(x+y)}{I_{\beta}(x)}\Bigg) = \frac{I'_{\beta}(x)}{I_{\beta}(x)} y + r(x,y)
    \end{equation*}with \(|r(x,y)|\leq \frac{c_{\beta}'}{2} y^2\) by property~\ref{item:Ibeta:sub_quad} of \(I_{\beta}\). This gives the wanted result.
\end{proof}

\begin{remark}\label{r:techimprovement}
The main difference between our argument and the one in \cite[Section 7]{FS81a} is the treatment of the terms $f\cdot a_{\Ncal,\rho}$ that appear in \eqref{eq:integral_rep_ratio_ensemble_PF} (via the definition of $F_\rho$ and $G_\rho$). The support of $a_{\Ncal,\rho}$ is contained in $\DensSquareEnv_{\Lambda}(\rho)\cap \Lambda$ by Lemma \ref{lem:spin_waves}, and so these terms are zero unless $\supp(f)$ intersects $\DensSquareEnv_{\Lambda}(\rho)\cap \Lambda$. This means that if the support of $f$ has small cardinality, then only a few factors in \eqref{eq:integral_rep_ratio_ensemble_PF} are problematic. In \cite[(7.37)-(7.39)]{FS81a} the authors estimate these roughly using absolute values, which leads to an estimate like ours, but with an additional factor $e^{-C|\supp(f)|}$ on the right-hand side. This loss of a constant factor is stated in \cite[(7.40)]{FS81a}, but missing from the statement of \cite[(7.18)]{FS81a}.

As mentioned in Subsection~\ref{subsec:mainideas}, we cannot afford the loss of this factor $e^{-C|\supp(f)|}$ for our purposes, and we avoid its loss by including the terms $f\cdot a_{\Ncal,\rho}$ into the Taylor expansion. This is conceptually not too different from \cite{FS81a}, but makes the computations more delicate and requires new estimates like \eqref{eq:a_rho_scalar_f}.
\end{remark}

\subsection{Lower bound without symmetrization}
\label{subsec:LB_without_sym}

In this section, we conclude the proof of Theorem~\ref{thm:LB_trigo_poly_non_sym}.
We plug the results of Claims~\ref{claim:Taylor_FG} and~\ref{claim:Taylor_lnI} in~\eqref{eq:integral_rep_ratio_ensemble_PF} to obtain that for \(\epsilon>0\),
\[
    \frac{Z_{\Ncal}^{\zeta}(\epsilon f; \epsilon\sigma) }{Z_{\Ncal}^{\zeta}}
    =
    \int d\nu_{\Lambda}^{\xi}(\phi) e^{\epsilon f\cdot \phi+T(\phi,\epsilon \sigma) +S_{\Ncal}(\phi,\epsilon \sigma, \epsilon f)}e^{\epsilon^2\sigma\cdot f+R(\phi,\epsilon \sigma) + r_{\Ncal}(\phi,\epsilon \sigma, \epsilon f)}
\]
Here by the definitions of $T$ and $S_\Ncal$, the first exponent is linear in $\epsilon$, i.e.
\begin{equation*}
    \epsilon f\cdot \phi+ T(\phi,\epsilon \sigma)+S_{\Ncal}(\phi,\epsilon \sigma, \epsilon f)
    =
    \epsilon( f\cdot \phi+ T(\phi, \sigma)+S_{\Ncal}(\phi, \sigma, f)).
\end{equation*}
The second exponent is quadratic in $\epsilon$, and in fact we have the estimate
\begin{equation}\label{eq:estquadraticqualitative}
    \epsilon^2\sigma\cdot f+R(\phi,\epsilon \sigma) + r_{\Ncal}(\phi,\epsilon \sigma, \epsilon f)= O(\epsilon^2)
\end{equation}
where the implicit constant will in general depend on $f,\sigma,\rho,\Ncal$.

As a consequence
\[
    \frac{Z_{\Ncal}^{\zeta}(\epsilon f; \epsilon\sigma) }{Z_{\Ncal}^{\zeta}}
    =
\int d\nu_{\Lambda}^{\xi}(\phi)e^{\epsilon f\cdot \phi} e^{T(\phi,\epsilon \sigma)} e^{S_{\Ncal}(\phi,\epsilon \sigma, \epsilon f)}e^{O(\epsilon^2)}.
\]

Plugging this expression in~\eqref{eq:multipol_exp:correlation_II}, one gets
\begin{equation*}
    \frac{\int d\mu_{\Lambda}^{\xi} e^{\epsilon f\cdot \phi}\lambda_{\Lambda}(\phi-\zeta)}{\int d\mu_{\Lambda}^{\xi} \lambda_{\Lambda}(\phi-\zeta)}
    =
    e^{O(\epsilon^2)} \sum_{\Ncal\in \Fcal}c_{\Ncal}' \int d\nu_{\Lambda}^{\xi}(\phi)e^{\epsilon f\cdot \phi} e^{T(\phi,\epsilon \sigma)} e^{S_{\Ncal}(\phi,\epsilon \sigma, \epsilon f)}.
\end{equation*}
where the implicit constant still depends on $f,\sigma,\Ncal$, but is finite because \(\Fcal\) is finite. 
Expanding both sides to first order in \(\epsilon\), subtracting the order \(0\) term (\(1\) on both sides), dividing by \(\epsilon\) and letting \(\epsilon\to 0\), we obtain
\begin{equation}
\label{eq:mean_value_linear_obs}
    \frac{\int d\mu_{\Lambda}^{\xi} f\cdot \phi \lambda_{\Lambda}(\phi-\zeta)}{\int d\mu_{\Lambda}^{\xi} \lambda_{\Lambda}(\phi-\zeta)}
    =
    \sum_{\Ncal\in \Fcal}c_{\Ncal}' \int d\nu_{\Lambda}^{\xi}(\phi) \big(f\cdot \phi+ T(\phi, \sigma)+S_{\Ncal}(\phi, \sigma, f)\big).
\end{equation}This is valid for any \(f,\sigma\). This is the analogue of the ``modular invariance identity'' \cite[Proposition 4.1]{GS24}. 

In order to proceed further, we first choose $\Cr{lowerboundmainthm}$. In fact we will set $\Cr{lowerboundmainthm}=\frac{\Cr{energy}}{4}$. Next we make a suitable choice of \(\sigma\): the one used in Claim~\ref{claim:Taylor_prod_FG}. Let \(\sigma = e^{\Cr{energy}\gamma_{\beta}/4}\Delta_{\Lambda}^{-1} f=e^{\Cr{lowerboundmainthm}\gamma_{\beta}}\Delta_{\Lambda}^{-1} f\) , so that \(f = e^{-\Cr{lowerboundmainthm}\gamma_{\beta}} \Delta_{\Lambda} \sigma\), and \(f\cdot \sigma = e^{-\Cr{lowerboundmainthm}\gamma_{\beta}} \norm{\nabla \sigma}_{2}^2 = e^{\Cr{lowerboundmainthm}\gamma_{\beta}} f\cdot \Delta_{\Lambda}^{-1} f\). 
We can now give a more precise version of \eqref{eq:estquadraticqualitative}. Namely as a consequence of as a combination of Claims~\ref{claim:Taylor_prod_FG} and~\ref{claim:Taylor_lnI} we have for $\beta$ small enough
\begin{equation}\label{eq:estquadraticquantitative}
    \sigma \cdot f + R(\phi,\epsilon \sigma) + r_{\Ncal}(\phi,\epsilon \sigma, \epsilon f)
    \geq
    \big(e^{-\Cr{lowerboundmainthm}\gamma_{\beta}} - \tfrac{1}{2}c_{\beta}' -2e^{-2\Cr{lowerboundmainthm}\gamma_{\beta}}\big)\norm{\nabla \sigma}_2^2\ge\frac12e^{-\Cr{lowerboundmainthm}\gamma_{\beta}}\norm{\nabla \sigma}_2^2
\end{equation}
where the last estimate holds if $\gamma_\beta$ is large enough and 
$c_\beta'e^{\Cr{lowerboundmainthm}\gamma_{\beta}}\le\frac18$, and this in turn follows from \eqref{eq:def:gamma_beta} if we pick $\Cr{quantsmallgamma}$ small enough.

Then, for \(\epsilon >0\) and \(\beta\) sufficiently small, we obtain from~\eqref{eq:multipol_exp:correlation_II} and~\eqref{eq:estquadraticquantitative} that 
\begin{multline}\label{eq:lowerboundshorthand}
    \frac{\int d\mu_{\Lambda}^{\xi}(\phi) e^{\epsilon f\cdot \phi} \lambda_{\Lambda}(\phi-\zeta)}{\int d\mu_{\Lambda}^{\xi} (\varphi)\lambda_{\Lambda}(\varphi-\zeta)}\\
    \geq
    \exp\Big( \tfrac12\epsilon^2e^{\Cr{lowerboundmainthm}\gamma_{\beta}} f\cdot \Delta_{\Lambda}^{-1} f\Big) \sum_{\Ncal\in \Fcal}c_{\Ncal}' \int d\nu_{\Lambda}^{\xi}(\phi) e^{\epsilon (f\cdot \phi+T(\phi, \sigma) + S_{\Ncal}(\phi, \sigma, f))}.
\end{multline}

We introduce now the shorthands \(g(\Ncal,\phi) \equiv f\cdot \phi+T(\phi, \sigma) + S_{\Ncal}(\phi, \sigma, f)\), \(t\equiv e^{\Cr{lowerboundmainthm}\gamma_{\beta}}\), and
\begin{equation*}
    d\tilde{\mu}(\phi) \equiv \frac{1}{\int d\mu_{\Lambda}^{\xi}(\varphi) \lambda_{\Lambda}(\varphi-\zeta)} \lambda_{\Lambda}(\phi-\zeta) d\mu_{\Lambda}(\phi),
    \quad
    d\tilde{\nu}(\Ncal,\phi) \equiv c_{\Ncal}'d\nu_{\Lambda}^{\xi}(\phi),
\end{equation*}
where we see \(\tilde{\nu}\) as a probability measure on \(\Fcal\times \R^{\Lambda}\). Then we have
\begin{align}
    \int d\tilde{\mu}(\phi) f\cdot \phi
    &=
    \int d\tilde{\nu}(\Ncal,\phi) g(\Ncal,\phi)\label{eq:eqfirstorder},
    \\
    \int d\tilde{\mu}(\phi) e^{\epsilon f\cdot \phi}
    &\geq
    e^{ \epsilon^2 t f\cdot \Delta_{\Lambda}^{-1} f/2} \int d\tilde{\nu}(\Ncal,\phi) e^{\epsilon g(\Ncal,\phi)}\label{eq:ineqexpmoment}
\end{align}
where the first line is~\eqref{eq:mean_value_linear_obs}, and the second~\eqref{eq:lowerboundshorthand}. When we expand both sides of \eqref{eq:ineqexpmoment} to second order in \(\epsilon\), the zeroth order terms are both equal to 1, and the first order terms of both sides agree by \eqref{eq:eqfirstorder}. Hence, subtracting them, dividing by \(\epsilon^2/2\) and letting \(\epsilon\to 0\), we get
\begin{align*}
    \int d\tilde{\mu}(\phi) (f\cdot \phi)^2
    &\geq
    t f\cdot \Delta_{\Lambda}^{-1} f +  \int d\tilde{\nu}(\Ncal,\phi) g(\Ncal,\phi)^2
    \\
    &\geq
    t f\cdot \Delta_{\Lambda}^{-1} f +  \Big(\int d\tilde{\nu}(\Ncal,\phi) g(\Ncal,\phi) \Big)^2\\
    &=
    t f\cdot \Delta_{\Lambda}^{-1} f +  \Big(\int d\tilde{\mu}(\phi) f\cdot \phi\Big)^2
\end{align*}where we used the Cauchy-Schwarz inequality in the second line, and~\eqref{eq:mean_value_linear_obs} in the last line. Re-ordering the terms and replacing the shorthands by their definition, we get Theorem~\ref{thm:LB_trigo_poly_non_sym}.

\subsection{Lower bound via symmetrization}
\label{subsec:LB_sym}

We now prove Theorem~\ref{thm:LB_trigo_poly_sym}. Recall the definition of the reflection symmetry from \eqref{eq:def_reflectionsymmetry}.

Under the assumptions of Theorem~\ref{thm:LB_trigo_poly_sym} we know that $f\cdot\phi=-f\cdot\phi^{\refl}$, $I(\phi)=I(-\phi^{\refl})$ and (because trigonometric polynomials are even functions) also
\[\lambda_{\Lambda}(\phi-\zeta)=\lambda_{\Lambda}\big(-(\phi-\zeta)^{\refl}\big)=\lambda_{\Lambda}\big(-\phi^{\refl}-\zeta\big).\]
So, if we make the change of variables $\varphi=-\phi^{\refl}$, we obtain that
\begin{equation*}
    \int d\phi \calI_{\Lambda}^{\xi}(\phi) e^{f\cdot \phi} \lambda_{\Lambda}(\phi-\zeta)
    =
    \int d\phi \calI_{\Lambda}^{\xi}(\phi) e^{f\cdot \phi^{\refl}} \lambda_{\Lambda}(-\phi^{\refl}-\zeta)
    =
    \int d\varphi \calI_{\Lambda}^{\xi}(\varphi) e^{-f\cdot \varphi} \lambda_{\Lambda}(\varphi-\zeta)
\end{equation*}
and hence also
\begin{equation*}
    \int d\phi \calI_{\Lambda}^{\xi}(\phi) e^{f\cdot \phi} \lambda_{\Lambda}(\phi-\zeta)=\frac12\Bigg(\int d\phi \calI_{\Lambda}^{\xi}(\phi) e^{f\cdot \phi} \lambda_{\Lambda}(\phi-\zeta)+\int d\phi \calI_{\Lambda}^{\xi}(\phi) e^{-f\cdot \phi} \lambda_{\Lambda}(\phi-\zeta)\Bigg)
\end{equation*}

We can then use the above and the exact same procedure leading to~\eqref{eq:multipol_exp:correlation_I} to obtain that for \(\sigma\in \R^{\Lambda}\) with \(\sigma_i = \sigma_{-i}\),
\begin{equation}\label{eq:symm_expansion}
    \int d\phi \calI_{\Lambda}^{\xi}(\phi) e^{f\cdot \phi} \lambda_{\Lambda}(\phi)
    =
    \frac{1}{2}\sum_{\Ncal\in \Fcal}c_{\Ncal} \big(Z^\zeta_{\Ncal}(f;\sigma) +Z^\zeta_{\Ncal}(-f;-\sigma)\big),
\end{equation}where, as before,
\begin{equation*}
    Z^\zeta_{\Ncal}(\pm f;\pm\sigma)
    =
    e^{\sigma\cdot f}\int d\phi \calI_{\Lambda}^{\xi}(\phi \pm\sigma) e^{\pm f\cdot \phi} \prod_{\rho\in \Ncal}\big(1 + 2K(\rho)\cos(\phi\cdot \rho \pm \sigma \cdot \rho)\big).
\end{equation*}

Next, we can renormalize the activities exactly as in Section~\ref{subsec:activity_renorm} to obtain that \(Z_{\Ncal}(\pm f;\pm\sigma)\) is equal to
\begin{equation*}
    e^{\sigma\cdot f}\int d\phi \calI_{\Lambda}^{\xi}(\phi\pm \sigma) e^{\pm f\cdot \phi} \prod_{\rho\in \Ncal} \Big(1
    +
    z_{\rho} F_{\rho}(\phi,\pm \sigma,\pm f)
    \\
    -
    z_{\rho} G_{\rho}(\phi,\pm \sigma,\pm f)\Big),
\end{equation*}with \(F_{\rho}, G_{\rho}\) as in Section~\ref{subsec:Taylor}, and the same for \(Z_{\Ncal}(f;\sigma)\). We can use the same choice of \(\sigma\) as in the previous section:
\begin{equation*}
    \sigma = e^{\Cr{lowerboundmainthm}\gamma_{\beta}}\Delta_{\Lambda}^{-1} f.
\end{equation*}

Now, by Claim~\ref{claim:Taylor_prod_FG}, one has
\begin{multline*}
    \prod_{\rho\in \Ncal} \frac{1+ z_{\rho}F_{\rho}(\phi,\pm\sigma,\pm f) - z_{\rho}G_{\rho}(\phi,\pm\sigma,\pm f)}{1+ z_{\rho}F_{\rho}(\phi) - z_{\rho}G_{\rho}(\phi)}\\
    \geq
    \exp\big(S_{\Ncal}(\phi,\pm\sigma,\pm f) - 2e^{-\Cr{energy}\gamma_{\beta}/2} \norm{\nabla\sigma}_2^2 \big)
\end{multline*}
with \(S_{\Ncal}\) defined as in Section~\ref{subsec:Taylor}.
Also, by Claim~\ref{claim:Taylor_lnI}, one has
\begin{equation*}
    \frac{\calI_{\Lambda}^{\xi}(\phi \pm \sigma)}{\calI_{\Lambda}^{\xi}(\phi)}
    \geq
    \exp\big(T(\phi,\pm\sigma) - \tfrac{c_{\beta}'}{2}\norm{\nabla \sigma}_2^2\big)
\end{equation*}with \(T(\phi,\sigma)\) defined as in Claim~\ref{claim:Taylor_lnI}. 
We also know that \(f\cdot \sigma = e^{-\Cr{energy}\gamma_{\beta}/4} \norm{\nabla \sigma}_{2}^2\). Using these estimates, we find
\begin{align*}
    Z^\zeta_{\Ncal}(\pm f;\pm \sigma)&\geq \exp\Big(\big(e^{-\Cr{lowerboundmainthm}\gamma_{\beta}} - \tfrac{1}{2}c_{\beta}' -2e^{-2\Cr{lowerboundmainthm}\gamma_{\beta}}\big)\norm{\nabla \sigma}_2^2\Big)\\
    &\qquad\qquad\times\int d\phi \calI_{\Lambda}^{\xi}(\phi)
   e^{\pm f\cdot \phi + T(\phi,\pm\sigma)+S_{\Ncal}(\phi,\pm\sigma,\pm f)}\prod_{\rho\in \Ncal} \big(1
    +
    z_{\rho} F_{\rho}(\phi)
    -
    z_{\rho} G_{\rho}(\phi)\big)\\
    &\ge\exp\Big(\tfrac12e^{-\Cr{lowerboundmainthm}\gamma_{\beta}}\norm{\nabla \sigma}_2^2\Big)\\
    &\qquad\qquad\times\int d\phi \calI_{\Lambda}^{\xi}(\phi)
   e^{\pm f\cdot \phi + T(\phi,\pm\sigma)+S_{\Ncal}(\phi,\pm\sigma,\pm f)}\prod_{\rho\in \Ncal} \big(1
    +
    z_{\rho} F_{\rho}(\phi)
    -
    z_{\rho} G_{\rho}(\phi)\big)
\end{align*}
where the last estimate is as in \eqref{eq:estquadraticquantitative}.

When we add the two estimates for $Z_{\Ncal}(\pm f;\pm \sigma)$, we can use that $T$ and $S_\Ncal$ are linear in $(f,\sigma)$ and hence \(T(\phi,-\sigma) = -T(\phi,\sigma)\), \(S_{\Ncal}(\phi,-\sigma,-f) = -S_{\Ncal}(\phi,\sigma,f)\). This means that we can use the simple \(e^{x}+e^{-x} \geq 2\) to obtain a lower bound. In detail, we obtain
\begin{align*}
    &Z^\zeta_{\Ncal}(f;\sigma) +Z^\zeta_{\Ncal}(-f;-\sigma)\\
    &\geq
    \exp\Big(\tfrac12e^{-\Cr{lowerboundmainthm}\gamma_{\beta}}\norm{\nabla \sigma}_2^2\Big)\int d\phi \calI_{\Lambda}^{\xi}(\phi)
    \\
    &\qquad\qquad\times
    \Big(e^{f\cdot \phi + T(\phi,\sigma)+S_{\Ncal}(\phi,\sigma,f)} + e^{-f\cdot \phi - T(\phi,\sigma) - S_{\Ncal}(\phi,\sigma,f)}\Big)\prod_{\rho\in \Ncal} \big(1
    +
    z_{\rho} F_{\rho}(\phi)
    -
    z_{\rho} G_{\rho}(\phi)\big)
    \\
    &\geq2\exp\Big(\tfrac12e^{-\Cr{lowerboundmainthm}\gamma_{\beta}4}\norm{\nabla \sigma}_2^2\Big)\int d\phi \calI_{\Lambda}^{\xi}(\phi)\prod_{\rho\in \Ncal} \big(1
    +
    z_{\rho} F_{\rho}(\phi)
    -
    z_{\rho} G_{\rho}(\phi)\big)\\
    &=2\exp\Big(\tfrac12e^{-\Cr{lowerboundmainthm}\gamma_{\beta}}\norm{\nabla \sigma}_2^2\Big)Z_{\Ncal}
\end{align*}

Using \eqref{eq:symm_expansion} and its version for $f=\sigma=0$, namely
\begin{equation*}
    \int d\phi \calI_{\Lambda}^{\xi}(\phi) \lambda_{\Lambda}(\phi) = \sum_{\Ncal\in \Fcal}c_{\Ncal} Z^\zeta_{\Ncal},
\end{equation*} we conclude that
\begin{equation*}
    \frac{\int d\phi \calI_{\Lambda}^{\xi}(\phi) e^{f\cdot \phi} \lambda_{\Lambda}(\phi)}{\int d\phi \calI_{\Lambda}^{\xi}(\phi) \lambda_{\Lambda}(\phi)}
    \geq
    \exp\Big(\tfrac12e^{-\Cr{lowerboundmainthm}\gamma_{\beta}}\norm{\nabla \sigma}_2^2\Big)
    =
    \exp\Big(\tfrac12e^{\Cr{lowerboundmainthm}\gamma_{\beta}}f\cdot\Delta_\Lambda^{-1}f\Big).
\end{equation*} This is what we needed to prove.

\section{\texorpdfstring{Proof of results for \(p\)-SOS model}{Proof of results for p-SOS model}}
\label{sec:trigo_poly_to_sum}

In this section, we prove that the SOS (or more generally, the \(p\)-SOS) measure can be put in the framework of the previous section, thus deducing Theorem~\ref{thm:main:p_SOS} from Theorems~\ref{thm:LB_trigo_poly_non_sym} and~\ref{thm:LB_trigo_poly_sym}.

\subsection{Analytic extension}

We start by showing that the \(p\)-SOS models can be put in the framework of Section~\ref{subsec:models}.
\begin{lemma}\label{p:heightfunctextension}
The following integer-valued height functions can be rewritten in the form required in Section~\ref{subsec:models}:
\begin{itemize}
    \item The integer-valued GFF, where
    \[I_\beta(n)=\exp\left(-\beta|n|^2\right)\quad\text{for }n\in\Z\]
    \item The integer-valued solid-on-solid model, where
    \[I_\beta(n)=\exp\left(-\beta|n|\right)\quad\text{for }n\in\Z\]
    \item More generally, the $p$-SOS-model for $p\in(0,2]$, given by
    \[I_\beta(n)=\exp\left(-\beta|n|^p\right)\quad\text{for }n\in\Z\]
    \item The height function dual to the XY-model at inverse temperature $\beta$, given by
    \[I_\beta(n)=\mathsf{I}_{n}\left(\frac{1}{\beta}\right)\quad\text{for }n\in\Z\]
    where 
    \[\mathsf{I}_\nu(z)=\frac{1}{\pi}\int_0^\pi e^{z\cos(t)}\cos(\nu t) dt\] denotes the modified Bessel function of the first kind of order $\nu$. See \cite[Section 6]{FS81a} and \cite{vEL23b} for an explanation of how this height function is related to the XY-model.
\end{itemize}

\end{lemma}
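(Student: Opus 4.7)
The plan is to exhibit, for each of the four listed models, an explicit analytic extension $I_\beta$ of the prescribed integer values and verify conditions~(1)--(4) of Subsection~\ref{subsec:models}. I would treat the $p$-SOS variants together, with the GFF case serving as a template.

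For the integer-valued GFF take the obvious $I_\beta(z)=e^{-\beta z^2}$, which is entire, even, strictly positive and Schwartz on~$\R$; all the conditions then reduce to direct computations with $g(a)=a^2$, $c_\beta,c_\beta'\asymp\beta$ and $\epsilon_\beta=\infty$. For $p\in(0,2)$ one cannot use $I_\beta(z)=e^{-\beta|z|^p}$ because $|z|^p$ is not analytic at the origin, so I would take a positive analytic interpolant of the sequence $(e^{-\beta|n|^p})_{n\in\Z}$,
\[
I_\beta(z) = \sum_{n\in\Z} e^{-\beta|n|^p}\,\varphi(z-n), \qquad \varphi(z) = \sinc^2(z)\, e^{-\alpha z^2},
\]
with $\alpha>0$ a parameter depending on $p$ and $\beta$ to be tuned. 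This $\varphi$ is real-analytic, even, strictly positive on $\R$, Schwartz, and satisfies $\varphi(0)=1$ together with $\varphi(k)=0$ for every $k\in\Z\setminus\{0\}$, so that $I_\beta(n)=e^{-\beta|n|^p}$ by inspection. Using the identity $|\sin(\pi(x+ia))|^2=\sin^2(\pi x)+\sinh^2(\pi a)$ one obtains the bound $|\varphi(x+ia)|\lesssim e^{2\pi|a|}e^{\alpha(a^2-x^2)}$, which combined with the stretched-exponential decay of the weights $e^{-\beta|n|^p}$ gives absolute convergence of the series in every horizontal strip and the Schwartz property of $I_\beta$ on~$\R$. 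Conditions (2)--(3) are then verified by splitting the ratio $I_\beta(x+ia)/I_\beta(x)$ into the contribution of $n$ closest to~$x$ and a tail, and combining the above bound on~$\varphi$ with the decay of the weights; optimising the scale $\alpha$ produces the parameters $c_\beta,c_\beta'\asymp\beta^{p/(p+2)}$, $g(a)=Ca^2(1+e^{2\pi|a|})$ and $\epsilon_\beta\asymp\beta^{-1/(p+1)}$ announced in Remark~\ref{r:choiceparam}.

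For the XY-dual height function the Bessel integral itself provides the extension: set
\[
I_\beta(z) = \frac{1}{\pi}\int_0^\pi e^{\cos(t)/\beta}\cos(z t)\, dt,
\]
which is entire and even in~$z$, coincides with $\mathsf{I}_n(1/\beta)$ on integers, and is positive on~$\R$ (for instance via the Jacobi--Anger expansion $e^{\cos t/\beta}=\mathsf{I}_0(1/\beta)+2\sum_{k\ge 1}\mathsf{I}_k(1/\beta)\cos kt$ together with Parseval). The elementary estimate $|\cos((x+ia)t)|\le\cosh(at)$ on $t\in[0,\pi]$ combined with a Laplace-type analysis of the integral near the maximum of $e^{\cos t/\beta}$ at $t=0$ (of effective width $\sqrt\beta$) yields (2)--(4) with parameters of the same flavour as in the $p$-SOS case.

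The main obstacle I anticipate is condition~(4) in the $p$-SOS case, i.e.\ the uniform bound $|(\ln I_\beta)''(x)|\le c_\beta'$ on the whole real line. The difficulty is that $(\ln I_\beta)''$ can blow up wherever $I_\beta$ is small, and the interpolation construction does not automatically preserve a good lower bound between integers. The remedy is to choose $\alpha$ (the width of the smoothing) carefully in terms of $p$ and $\beta$ so that the pointwise lower bound $I_\beta(x)\ge c\, e^{-\beta|x|^p}$ holds uniformly on~$\R$; given this, $|(\ln I_\beta)''(x)|$ can be controlled via Cauchy's integral formula on a small disk inside the analyticity strip. An analogous lower bound in the XY case follows from classical monotonicity properties of the ratios $\mathsf{I}_{\nu+1}/\mathsf{I}_\nu$.
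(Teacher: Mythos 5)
Your construction and the paper's are genuinely different, and the difference matters. You interpolate $I_\beta$ directly, as $I_\beta(z)=\sum_n e^{-\beta|n|^p}\varphi(z-n)$ with a sinc-type kernel $\varphi$. The paper instead interpolates the \emph{logarithm}: it sets
\[
I_\beta(z)=\exp\Big(-f(z)+\sum_m s_m\,\sinc^2(\pi(z-m))\Big),
\]
where $f(z)=\beta^\alpha(1+\beta^{2(1-\alpha)/p}z^2)^{p/2}$ is an explicit holomorphic surrogate for $\beta|z|^p$ and $s_m=\beta|m|^p-f(m)$ are small corrections. This has three decisive advantages. First, $I_\beta$ is manifestly non-vanishing on the whole analyticity strip, so $\ln I_\beta$ is a well-defined holomorphic function with an \emph{explicit} formula; its first and second derivatives are just $-f^{(m)}(z)+\pi^m\sum_k s_k(\sinc^2)^{(m)}(\pi(z-k))$, and conditions~\ref{item:Ibeta:imaginary_growth}--\ref{item:Ibeta:sub_quad} follow from one-line bounds on $|f''|$ and $|(\sinc^2)''|$. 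Second, since $f\to0$ and $s_m\to0$ as $\beta\to0$, the whole function $\ln I_\beta$ and all its derivatives shrink, giving $c_\beta,c_\beta'\asymp\beta^{p/(p+2)}\to0$ automatically. Third, no pointwise lower bound on $I_\beta$ is ever needed.

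Your proposed route, by contrast, has a genuine gap exactly where you suspect it: condition~\ref{item:Ibeta:sub_quad}. The remedy you describe — a real-axis lower bound $I_\beta(x)\ge c\,e^{-\beta|x|^p}$ followed by Cauchy's integral formula for $(\ln I_\beta)''$ — does not close the argument. Cauchy's formula needs $\ln I_\beta$ to be single-valued holomorphic on a \emph{complex} disk around $x$, i.e.\ $I_\beta$ must be non-vanishing there, and your lower bound is purely on $\R$; for $\Im z\ne 0$ the summands $e^{-\beta|n|^p}\varphi(z-n)$ are no longer positive, and non-vanishing of the sum is not obvious. Even granted non-vanishing, the naive Cauchy estimate $|(\ln I_\beta)''(x)|\le 2\sup_{|z-x|=r}|\ln I_\beta(z)|/r^3$ is useless here because $\sup|\ln I_\beta|\gtrsim\beta|x|^p$ grows with $x$; one must first subtract the zeroth- and first-order behaviour of $\ln I_\beta$, which amounts to introducing an explicit smooth approximant of $\beta|z|^p$ — i.e.\ reinventing the paper's $f$. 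There is a second structural issue: the Gaussian factor $e^{-\alpha z^2}$ destroys the Whittaker identity $\sum_n\sinc^2(\pi(z-n))\equiv 1$, so $I_\beta$ no longer tends to the constant function $1$ as $\beta\to0$ unless you also send $\alpha\to 0$ at a coupled rate; quantifying that coupling so as to recover $c_\beta,c_\beta'\to0$ at the claimed power of $\beta$ is a nontrivial two-parameter limit you have not carried out. (A minor slip: with the paper's convention $\sinc(z)=\sin(z)/z$, the kernel $\sinc^2(z)$ vanishes at $z=\pi k$, not at integers, so you need $\sinc^2(\pi z)e^{-\alpha z^2}$.) In short, the idea of sinc interpolation is the right ingredient, but placing it outside rather than inside the exponential creates both the non-vanishing problem and the normalisation problem, and the sketch does not resolve either. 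The GFF and XY cases in your proposal are fine.
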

This result is trivial in the case of the GFF, and the case of the SOS model and the dual of the XY model are established in \cite[Appendix B and C]{FS81a} (apart from the issue mentioned in Remark~\ref{r:difftoFS}). We provide a detailed proof for the case of the $p$-SOS-model in Appendix~\ref{app:analytic_ext}. The proof there also shows that one can choose the parameters in Subsection~\ref{subsec:models} as we indicated in Remark~\ref{r:choiceparam}, and that $I_\beta$ has stretched-exponential tails at speed $p$ in the sense that
\begin{equation}\label{eq:tailsIbeta}
|I_\beta(x)|\le Ce^{-\beta|x|^p+C\beta^{p/(p+2)}}\le C'_\beta e^{-\beta|x|^p}
\end{equation}
where $C'_\beta$ is some $\beta$-dependent constant.

\subsection{Reduction to trigonometric polynomials}
Now that we know that the setting of Subsection~\ref{subsec:models} applies in the case of the $p$-SOS model, the only remaining difficulty in proving Theorem \ref{thm:main:p_SOS} is to ensure that we can pass to suitable limits in Theorem~\ref{thm:LB_trigo_poly_non_sym} and Theorem~\ref{thm:LB_trigo_poly_sym}.

Recall that, as tempered distributions,
\begin{equation*}
    \comb(x) = \sum_{n\in \Z}\delta(x-n) = \sum_{n\in \Z}e^{2\pi \rmi x n}.
\end{equation*}
Define the trigonometric polynomials
\begin{gather*}
    \lambda^{\leq N}(x) = \sum_{q=-N}^N e^{2\pi \rmi q x} = 1 + 2 \sum_{n=1}^{N}\cos(2\pi q x),
    \\
    \lambda_{\Lambda}^{\leq N}(x) = \prod_{i\in \Lambda} \lambda^{\leq N}(x_i).
\end{gather*}

We then have a standard result on convergence of trigonometric polynomials.
\begin{lemma}
    \label{lem:trigo_poly_to_Dirac_comb}
    For any \(f:\R^{\Lambda}\to \R\) in the Schwartz space,
    \begin{equation*}
        \int_{\R^{\Lambda}} dx f(x) \lambda_{\Lambda}^{\leq N}(x) \xrightarrow{N\to\infty}
        \sum_{n\in \Z^{\Lambda}} f(n).
    \end{equation*}
\end{lemma}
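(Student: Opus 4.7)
The plan is to recognize the left-hand side as a truncated Poisson summation and pass to the limit by absolute convergence.

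First, I would expand the product defining $\lambda_\Lambda^{\le N}$:
\[
\lambda_\Lambda^{\le N}(x) = \prod_{i\in\Lambda}\sum_{q_i=-N}^N e^{2\pi\rmi q_i x_i} = \sum_{q\in\{-N,\ldots,N\}^\Lambda} e^{2\pi\rmi q\cdot x}.
\]
Since this is a finite sum, I can exchange it with the integral against $f$, obtaining
\[
\int_{\R^\Lambda} f(x)\lambda_\Lambda^{\le N}(x)\,dx = \sum_{q\in\{-N,\ldots,N\}^\Lambda} \hat f(-q),
\]
where $\hat f(q) := \int_{\R^\Lambda} f(x) e^{-2\pi\rmi q\cdot x}\,dx$ is the Fourier transform of $f$.

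Next I would pass to the limit $N\to\infty$. Since $f$ is Schwartz, so is $\hat f$, and in particular $\hat f$ is rapidly decaying on $\Z^\Lambda$; hence $\sum_{q\in\Z^\Lambda}|\hat f(q)| < \infty$. By dominated convergence on the counting measure (or simply by absolute convergence of the series), the truncated sums converge to the full sum:
\[
\sum_{q\in\{-N,\ldots,N\}^\Lambda} \hat f(-q) \xrightarrow{N\to\infty} \sum_{q\in\Z^\Lambda} \hat f(-q) = \sum_{q\in\Z^\Lambda}\hat f(q),
\]
where the last equality is just the change of variables $q\mapsto -q$.

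Finally I would invoke the Poisson summation formula, which for a Schwartz function $f$ on $\R^\Lambda$ gives
\[
\sum_{q\in\Z^\Lambda}\hat f(q) = \sum_{n\in\Z^\Lambda} f(n),
\]
completing the proof. There is no real obstacle here: the argument is a direct application of standard facts about the Fourier transform on $\R^\Lambda$, and the Schwartz assumption on $f$ makes every exchange of sum/integral and every limit trivially justified by absolute convergence.
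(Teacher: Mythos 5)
Your proof is correct. It differs from the paper's in that the paper simply reduces the lemma to the statement that $\lambda^{\leq N}_\Lambda$ converges to the Dirac comb in the space of tempered distributions and cites a textbook exercise (Folland) for that fact, whereas you give a direct, self-contained argument: expand the finite product into $\sum_{q\in\{-N,\dots,N\}^\Lambda}e^{2\pi\rmi q\cdot x}$, swap the finite sum with the integral to get partial sums of $\hat f$ over boxes, use the rapid decay of $\hat f$ (Schwartz) to pass to the full sum, and conclude with the Poisson summation formula. The two routes are mathematically close cousins --- the distributional identity $\comb=\sum_q e^{2\pi\rmi qx}$ tested against Schwartz functions \emph{is} Poisson summation --- so in effect you reprove the cited convergence fact for the test functions at hand rather than importing it. What your version buys is self-containedness (modulo the standard Poisson summation formula) and an explicit identification of the truncated integral as a partial sum of $\sum_q\hat f(q)$; what the paper's version buys is brevity. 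All the exchanges and limits in your argument are justified exactly as you say, by finiteness of the sum and absolute convergence coming from the Schwartz hypothesis.
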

\begin{proof}
This is equivalent to the fact that $\lambda^{\leq N}$ converge to $\comb$ in the space of tempered distributions on $\R^\Lambda$. For the latter fact, see e.g.~\cite[Chapter 9.2, Exercise 22]{F99}.
\end{proof}

\subsection{\texorpdfstring{Lower bound for $p$-SOS models}{Lower bound for p-SOS models}}

We now combine Lemmas~\ref{lem:trigo_poly_to_Dirac_comb} and Theorems~\ref{thm:LB_trigo_poly_non_sym} and~\ref{thm:LB_trigo_poly_sym} to 
establish our main result for $p$-SOS models, Theorem~\ref{thm:main:p_SOS}.

First, for \(p\in (1,2]\), \(I_{\beta}\) has super-exponential tails. Moreover, the analytic extension constructed in Lemma~\ref{p:heightfunctextension} is a Schwartz function, and the product \(\calI_{\Lambda}^{\xi}(\phi)e^{f\cdot \phi}\) is also a Schwartz function. We can therefore use Lemma~\ref{lem:trigo_poly_to_Dirac_comb} to go from Theorems~\ref{thm:LB_trigo_poly_non_sym} and~\ref{thm:LB_trigo_poly_sym} to Theorem~\ref{thm:main:p_SOS}. The case \(p=1\) is handled in the same fashion after adding the restriction on \(\norm{f}_{\infty}\).

If \(p\in (0,1)\), the above argument no longer works, as the stretched-exponential tails of $I_\beta$ can no longer compensate the exponential growth of $e^{f\cdot \phi}$, and so the exponential moments are no longer defined. So we need to regularize the model first.

Given a parameter $\varepsilon\ll1$, let \(I_{\beta}^{\varepsilon}:\Z\to \Z\) be given by
\begin{equation*}
    I_{\beta}^\varepsilon(x) =
    e^{-\varepsilon x^2}I_{\beta}(x).
\end{equation*}Define \(\calI_{\Lambda}^{\varepsilon,\xi}\) as \(\calI_{\Lambda}^{\xi}\) with \(I_{\beta}^\varepsilon\) replacing \(I_{\beta}\).

Suppose that \(I_{\beta}\) has at least stretched-exponential tails. Then for any \(a\in (\Z_{\geq 0})^{\Lambda}\),
\begin{equation*}
    \sum_{\phi\in \zeta + \Z^{\Lambda}} \calI_{\Lambda}^{\varepsilon,\xi}(\phi) \phi^{a}
    \xrightarrow{\varepsilon\to0}
    \sum_{\phi\in \zeta + \Z^{\Lambda}}  \calI_{\Lambda}^{\xi}(\phi) \phi^{a}.
\end{equation*}
pointwise

In the case of the $p$-SOS model, the analytic extension $I_\beta$ as constructed in Lemma~\ref{p:heightfunctextension} is a Schwartz function with stretched-exponential tails. But \(I_{\beta}^{\varepsilon}\) then has an extension with Gaussian tails (the straightforward one). In particular, approximating it as before, one obtains from Theorem~\ref{thm:LB_trigo_poly_non_sym} that for \(\varepsilon>0\) fixed small enough,
\begin{equation*}
    \int d\mu_{\Lambda;\beta}^{\varepsilon,\xi,\zeta, \IV}(\phi) (f\cdot \phi)^2 - \Big(\int d\mu_{\Lambda;\beta}^{\varepsilon,\xi,\zeta,\IV}(\phi) f\cdot \phi\Big)^2
    \geq e^{\Cr{lowerboundmainthm}\gamma_{\beta}} f\cdot \Delta_{\Lambda}^{-1} f
\end{equation*}where \(\mu_{\Lambda;\beta}^{\varepsilon,\xi,\zeta,\IV}\) is defined as \(\mu_{\Lambda;\beta}^{\xi,\zeta,\IV}\) using \(\calI_{\Lambda}^{\varepsilon,\xi}\) in place of \(\calI_{\Lambda}^{\xi}\). Letting \(\varepsilon\to 0\) and using dominated convergence then gives Theorem~\ref{thm:main:p_SOS} also for $0<p<1$.

\appendix

\section{Spin wave construction}
\label{sec:spin_waves}

In this section we present the construction of the spin waves that are used in the renormalization of the multipole activities. It is basically the same construction as the ones of~\cite[Section 4]{FS81a}, and~\cite{KP17}, so we only sketch the arguments and refer to the relevant parts of~\cite{KP17}.
Recall \(g\) from Subsection~\ref{subsec:models}.

Introduce the following notation: for a function \(f\) from the edges of \(\Z^2\) to \(\R\), define
\begin{equation*}
    \supp f = \{i\in \Z^2:\ \exists j\sim i, f(\{i,j\})\neq 0 \}.
\end{equation*}
For a charge ensemble \(\Ncal\), and \(\rho\in \Ncal\), define
\begin{equation*}
    \Ncal_{\lesssim }(\rho) = \{\rho'\in \Ncal:\ \dd(\rho')\leq 2\dd(\rho)\}.
\end{equation*}Also recall
\begin{equation*}
    E_{\beta}(\rho, a) = \rho \cdot a - (2\Cr{lnIratio_deriv}+1)c_{\beta}\sum_{i\sim j} g(a_i-a_j).
\end{equation*}

\begin{lemma}
    \label{lem:spin_wave_level_0}
    \(\Lambda\subset \Z^2\). Let \(\Ncal\) be a charge ensemble as given by Theorem~\ref{thm:convex_combination_ensembles}. Let \(\rho\in \Ncal\). Then, there is \(b_{\rho,0}:\Z^2 \to \R\) such that
    \begin{enumerate}
        \item \(\supp b_{\rho,0}\subset \supp \rho\);
        \item for all \(i\sim j\), \(|b_{\rho,0}(i)-b_{\rho,0}(j)| \leq 1\);
        \item for every \(\rho'\in \Ncal_{\lesssim}(\rho)\), \(b_{\rho,0}\) is constant on \(\DensSquareEnv^+(\rho')\);
        \item one has \(\rho \cdot b_{\rho,0} \geq \frac{1}{2}\norm{\rho}_1\).
    \end{enumerate}
\end{lemma}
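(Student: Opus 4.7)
The plan is to take the simplest possible construction, namely $b_{\rho,0}(i) := \tfrac{1}{2}\sgn(\rho(i))$ for $i\in\supp\rho$ and $b_{\rho,0}(i) := 0$ otherwise, and let the combinatorial separation properties of the ensemble $\Ncal$ do all the geometric work. Properties (1), (2), and (4) are then immediate: the support is contained in $\supp\rho$ by fiat; the function takes only the three values $\pm\tfrac{1}{2}$ and $0$, so edge differences lie in $\{0,\tfrac12,1\}$; and
\[\rho\cdot b_{\rho,0} = \tfrac12\sum_{i\in\supp\rho}\rho(i)\sgn(\rho(i)) = \tfrac12\norm{\rho}_1.\]

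The whole content of the lemma lies in (3), which I would read as being asserted for $\rho'\in\Ncal_{\lesssim}(\rho)\setminus\{\rho\}$ (the case $\rho'=\rho$ is incompatible with the choice of $b_{\rho,0}$ whenever $\rho$ is neutral, since then $\supp\rho\subset\DensSquareEnv^+(\rho)$ and $b_{\rho,0}$ takes both signs on $\supp\rho$). Under this reading (3) reduces to the purely geometric disjointness $\DensSquareEnv^+(\rho')\cap\supp\rho=\varnothing$, from which $b_{\rho,0}\equiv 0$ on $\DensSquareEnv^+(\rho')$ follows at once.

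To prove this disjointness I would combine three ingredients: (a) the definitions \eqref{eq:def:Envelope}--\eqref{eq:def:Large_Envelope} together with the defining property of $\DensCenter(\rho')$ give $\dist(i,\supp\rho')\le 3\dd_\Lambda(\rho')+1$ for every $i\in\DensSquareEnv^+(\rho')$; (b) the ensemble separation of Theorem~\ref{thm:convex_combination_ensembles}~\ref{i:distinctchargessep} gives $\dist(\rho,\rho')\ge M\min(\dd_\Lambda(\rho),\dd_\Lambda(\rho'))^\alpha$; (c) the defining inequality $\dd_\Lambda(\rho')\le 2\dd_\Lambda(\rho)$ of $\Ncal_{\lesssim}(\rho)$ yields $\min(\dd_\Lambda(\rho),\dd_\Lambda(\rho'))\ge \dd_\Lambda(\rho')/2$. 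Combined, these give
\[\dist(i,\supp\rho)\ge M\left(\dd_\Lambda(\rho')/2\right)^\alpha-(3\dd_\Lambda(\rho')+1),\]
which is strictly positive for the fixed parameters $M=2^{16}$, $\alpha>\tfrac{3}{2}$ and any $\dd_\Lambda(\rho')\ge 1$ (the latter holds for any non-trivial density, since $\dd_\Lambda$ dominates either the diameter of a neutral support, or the distance to $\Lambda^c$, both of which are at least $1$ in the relevant cases).

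The only mild obstacle is estimate (a): one must check that the convention used to define $\DensCenter(\rho')$ places it within $\dd_\Lambda(\rho')$ of $\supp\rho'$. In the branch $\dd_\Lambda(\rho')=\diam(\rho')$ this follows directly from the definition of the centre; in the ``else'' branch it relies on the implicit convention that the fixed tie-breaking rule among points at depth $\dd_\Lambda(\rho')$ from $\Lambda^c$ selects one inside (or adjacent to) $\supp\rho'$. Once this convention is granted, the proof reduces to routine arithmetic.
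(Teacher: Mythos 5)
Your proof is correct, and it takes a genuinely different (and arguably cleaner) route than the paper. The paper's construction first picks the bipartite class $V\subset\Z^2$ carrying at least half of $\norm{\rho}_1$ (and, when $\dd_\Lambda(\rho)=1$, also containing the centre), and then sets $b_{\rho,0}=\pm 1$ on $V\cap\supp\rho$ and $0$ elsewhere; the bipartite restriction is precisely what makes adjacent values differ by at most $1$. You bypass the bipartite split entirely by halving the amplitude, $b_{\rho,0}=\tfrac12\sgn\rho$ on $\supp\rho$; this makes (1), (2), (4) one-liners and shifts all the weight onto the geometric disjointness in (3), which your three-step estimate using Theorem~\ref{thm:convex_combination_ensembles}~\ref{i:distinctchargessep} and the definition of $\Ncal_{\lesssim}(\rho)$ establishes correctly (the generous margin coming from $M=2^{16}$ and $\alpha>\tfrac32$). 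Your reading of (3) as applying only to $\rho'\in\Ncal_{\lesssim}(\rho)\setminus\{\rho\}$ is the correct one and is also implicit in the paper — the constancy on $\DensSquareEnv^+(\rho)$ itself is supplied by Lemma~\ref{lem:spin_wave_square} applied with $\rho'=\rho$, not by Lemma~\ref{lem:spin_wave_level_0}, and the paper's own $b_{\rho,0}$ likewise fails to be constant there whenever $V\cap\supp\rho\neq\varnothing$; you are right to flag this. The one small trade-off of your construction, worth noting only because it percolates into downstream constants: your gradient now takes values in $\{0,\pm\tfrac12\gamma_\beta,\pm\gamma_\beta\}$ rather than $\{0,\pm\gamma_\beta\}$, so in the estimate \eqref{eq:decompEbeta_level0} one must use \eqref{eq:g_quadr} to bound $g(\gamma_\beta/2)\le \tfrac{\Cr{g_quadr}}{4}g(\gamma_\beta)$, changing the universal factor $4$ there to $\max(4,\Cr{g_quadr})$; since $\Cr{g_quadr}$ is already allowed to enter $\Cr{gamma}$ this is harmless, but the paper's bipartite trick avoids it cleanly. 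Finally, your remark on the tie-breaking convention for $\DensCenter$ in the non-neutral branch is exactly the right thing to check (and the convention is indeed that $v_*\in\supp\rho'$, since the minimizing site of $\dist(\cdot,\Lambda^c)$ over $\supp\rho'$ realizes $\dd_\Lambda(\rho')$), so no gap remains.
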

\begin{proof}
    \(\Z^2\) being bipartite, for at least one of the two classes, \(V\), it holds that
    \begin{itemize}
        \item \(\sum_{i\in V} |\rho(i)| \geq \frac{1}{2}\norm{\rho}_1\);
        \item if \(\dd_{\Lambda}(\rho) =1\), \(\DensCenter(\rho)\in V\).
    \end{itemize}
    Take
    \begin{equation*}
        b_{\rho,0}(i)
        =
        \begin{cases}
            1 & \text{ if } i\in V \text{ and } \rho_i >0,
            \\
            -1 & \text{ if } i\in V \text{ and } \rho_i <0,
            \\
            0 & \text{ else}.
        \end{cases}
    \end{equation*}
\end{proof}

The next Lemma is (a mild adaptation of)~\cite[Proposition 4.7]{KP17}, and~\cite[Proposition 26]{W19}.
\begin{lemma}
    \label{lem:spin_wave_square}
    Let \(\Lambda\subset \Z^2\). Let \(\Ncal\) be a charge ensemble as given by Theorem~\ref{thm:convex_combination_ensembles}. Let \(\rho\in \Ncal\). Then, there are \(b_{\rho,k}^s:\Z^2 \to \R\), \(1\leq k\leq \scale_{\Lambda}(\rho)\), \(s\in \SquareCover^{\sep}_k(\rho)\), such that
    \begin{enumerate}
        \item \(\supp b_{\rho,k}^s \subset \{j\in \Lambda:\ \dist(j,s)\leq 2^{k-1}\}\);
        \item \( b_{\rho,k}^s\) is constant on \(\{j\in \Lambda:\ \dist(j,s)\leq 2^{k-3}\vee 1\}\);
        \item for every \(\rho'\in \Ncal_{\lesssim}(\rho)\), \( b_{\rho,k}^s\) is constant on \(\DensSquareEnv^+(\rho')\);
        \item for all \(i\sim j\) with $i,j\not\in\bigcup_{\rho'\in \Ncal_{\lesssim}(\rho)}\DensSquareEnv^+(\rho')$, 
        \begin{equation}\label{eq:spin_wave_square_awayfromclust}
        |b_{\rho,k}^s(i)- b_{\rho,k}^s(j)|\leq \frac{1}{2^k}
        \end{equation}
        while if $i\sim j$, $i\in\bigcup_{\rho'\in \Ncal_{\lesssim}(\rho)}\DensSquareEnv^+(\rho')$, $j\not\in\bigcup_{\rho'\in \Ncal_{\lesssim}(\rho)}\DensSquareEnv^+(\rho')$ then
        \begin{equation}\label{eq:spin_wave_square_atclust}
        |b_{\rho,k}^s(i)- b_{\rho,k}^s(j)|\leq \frac{\diam(E)}{2^k}
        \end{equation}
        where $E$ is the connected cluster in $\bigcup_{\rho'\in \Ncal_{\lesssim}(\rho)}\DensSquareEnv^+(\rho')$ that contains $i$;
        \item we have that
        \begin{equation*}
            \rho \cdot b_{\rho,k}^s \geq \frac{1}{2^9}.
        \end{equation*}
    \end{enumerate}
    In particular, for any \((k,s)\neq(k',s')\), \(\supp \nabla b_{\rho,k}^s \cap \supp b_{\rho,k'}^{s'} = \varnothing\).
\end{lemma}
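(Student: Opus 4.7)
The plan is to adapt the construction from \cite[Proposition 4.7]{KP17} and \cite[Proposition 26]{W19}, with a small modification to accommodate the new requirement that $b_{\rho,k}^s$ be constant on each envelope $\DensSquareEnv^+(\rho')$ for $\rho' \in \Ncal_{\lesssim}(\rho)$. The construction will proceed in two stages: first define a plateau function centered on $s$, then flatten it on the union of the relevant envelopes.

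For the plateau, set
\begin{equation*}
    \tilde b_{\rho,k}^s(i) = \max\left(0, \min\left(1, \frac{2^{k-1} - \dist(i,s)}{2^{k-1} - (2^{k-3} \vee 1)}\right)\right)
\end{equation*}
for $i \in \Lambda$, and $0$ otherwise. This is a radial tent equal to $1$ on $\{i : \dist(i,s) \le 2^{k-3} \vee 1\}$, decaying linearly to $0$ by distance $2^{k-1}$, vanishing beyond, with nearest-neighbour gradient bounded by $C/2^k$. For the flattening, let $U = \bigcup_{\rho' \in \Ncal_{\lesssim}(\rho)} \DensSquareEnv^+(\rho')$, decompose $U$ into connected components $\{E_m\}$, pick in each $E_m$ a reference vertex $i_m$ closest to $s$, and then define
\begin{equation*}
    b_{\rho,k}^s(i) = \begin{cases} \tilde b_{\rho,k}^s(i_m) & \text{if } i \in E_m \text{ for some } m, \\ \tilde b_{\rho,k}^s(i) & \text{otherwise.} \end{cases}
\end{equation*}

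With this construction, items~1--3 of the lemma will be immediate, once one checks that any cluster $E_m$ meeting the inner plateau $\{i : \dist(i,s) \le 2^{k-3} \vee 1\}$ is entirely contained in it; this uses the well-separation $\dist(s,s') \ge 2M\, 2^{\alpha(k+1)}$ (with $\alpha > 3/2$ and $M$ large) together with the scale bound $\dd_\Lambda(\rho') \le 2\dd_\Lambda(\rho)$ for $\rho' \in \Ncal_{\lesssim}(\rho)$. The gradient bounds~\eqref{eq:spin_wave_square_awayfromclust} and~\eqref{eq:spin_wave_square_atclust} will follow by distinguishing edges: edges with both endpoints outside $U$ inherit directly the Lipschitz estimate on $\tilde b_{\rho,k}^s$, while boundary edges $\{i,j\}$ with $i \in E_m$, $j \notin U$ will satisfy
\begin{equation*}
    |b_{\rho,k}^s(i) - b_{\rho,k}^s(j)| = |\tilde b_{\rho,k}^s(i_m) - \tilde b_{\rho,k}^s(j)| \le C \dist(i_m, j)/2^k \le C(\diam(E_m)+1)/2^k,
\end{equation*}
which yields the required $\diam(E)/2^k$ estimate after absorbing the constant.

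The main obstacle will be the pairing bound $\rho \cdot b_{\rho,k}^s \ge 2^{-9}$. Since $\supp b_{\rho,k}^s$ lies in a $2^{k-1}$-neighbourhood of $s$ and the well-separation pushes every other $s' \in \SquareCover_k(\rho)$ much farther than that, the pairing will reduce to a sum over $\supp \rho \cap s$, on which $b_{\rho,k}^s$ equals $1$ after flattening (provided the clusters meeting the plateau are entirely absorbed by it, as arranged above). The lower bound will be extracted in two cases: when $|\SquareCover_k(\rho)| > 1$, property~\ref{i:isolateddensitycharged} of Theorem~\ref{thm:convex_combination_ensembles} will force the sub-density $\rho|_s$ to be charged, giving a net charge of absolute value at least $1$; when $|\SquareCover_k(\rho)| = 1$, the third alternative in the definition of $\SquareCover^{\sep}_k$ forces $\rho$ itself to be charged and well-separated from $\Lambda^c$. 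The delicate point will be verifying that the flattening does not cancel this contribution: if $\rho|_s$ happens to lie inside a cluster $E_m$, then $i_m$ must still sit in the plateau (so that $E_m$ is assigned the value $1$), which will follow from the separation condition~\ref{i:distinctchargessep}. Careful accounting of the resulting constants, as in \cite[Proposition 4.7]{KP17}, will then yield the explicit $2^{-9}$.
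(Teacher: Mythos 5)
Your construction differs from the paper's, which splits into two regimes: for $k<10$ it uses a flat plateau $\tfrac{\rho\cdot\mathds 1_s}{|\rho\cdot\mathds 1_s|}\,2^{-k}$ supported on the $1$-neighbourhood of $s$, and for $k\ge 10$ it imports the function from \cite[Display (4.26)]{KP17} with $\gamma=1/4$. Your unified tent-plus-flattening idea is in the right spirit for large $k$, but as written it has several genuine gaps.

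First, the sign. Your $\tilde b_{\rho,k}^s$ is nonnegative by construction, so after flattening $\rho\cdot b_{\rho,k}^s$ equals (up to the plateau constant) $\rho\cdot\mathds 1_s$ or $\rho\cdot\mathds 1_{E_m}$. Property~\ref{i:isolateddensitycharged} only guarantees $\rho|_s\cdot 1\ne 0$, not its sign. Without multiplying by $\sgn(\rho\cdot\mathds 1_s)$, item~5 can fail with $\rho\cdot b_{\rho,k}^s<0$. The paper's $k<10$ construction inserts exactly this sign factor; you never do.

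Second, item~1 (support containment) is not secured by your flattening. You take $i_m$ to be the vertex of the cluster $E_m$ closest to $s$. If $\dist(i_m,s)<2^{k-1}$ but $E_m$ extends past $\dist(\cdot,s)=2^{k-1}$, the flattened function is nonzero outside the claimed support. You explicitly check a containment fact only for clusters meeting the \emph{inner plateau}, not for clusters meeting the \emph{support annulus}. For $k\ge 10$ this is precisely one of the delicate geometric points the KP17 construction is engineered around (their clusters are absorbed entirely into level sets via an explicit harmonic-like profile), and for $k<10$ the paper sidesteps it by making the support so small that no $\DensSquareEnv^+(\rho')$ can intersect it at all — this requires an argument that you do not give.

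Third, the quantitative gradient bound fails. Your denominator $2^{k-1}-(2^{k-3}\vee 1)$ is $0$ for $k=1$ (division by zero), is $1$ for $k=2$ (gradient $\le 1$, not $\le 1/4$), and for $k\ge 3$ equals $3\cdot 2^{k-3}$, giving a Lipschitz constant $\tfrac{8}{3}\,2^{-k}>2^{-k}$. So \eqref{eq:spin_wave_square_awayfromclust} is violated by a constant factor, and simply rescaling $\tilde b$ so that the slope is $\le 2^{-k}$ shrinks the plateau value and changes the constant in item~5 — you would then have to redo the accounting. Similarly, your bound for a boundary edge gives $C(\diam(E_m)+1)/2^k$, whereas \eqref{eq:spin_wave_square_atclust} requires the cleaner $\diam(E)/2^k$; you need to show the implicit constant can be taken to be $1$, which is not automatic.

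Finally, the pairing bound argument needs more care than ``the sum over $\supp\rho\cap s$ equals $1$''. After flattening, if $\rho|_s$ lies partly inside a cluster $E_m$, what appears is $\rho\cdot\mathds 1_{E_m}$, not $\rho\cdot\mathds 1_s$; these may differ, since $E_m$ need not contain all of $s\cap\supp\rho$ and may contain other vertices. You would need to show the cluster structure respects the square $s$ in the right way (this is part of what KP17's explicit construction and your missing containment fact are supposed to deliver). As it stands, the claim that item~5 reduces to $|\rho\cdot\mathds 1_s|\ge 1$ is not justified.
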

\begin{proof}
    We mimic the proof in~\cite[Sections 4.3.2, 4.3.3]{KP17}. Distinguish two cases: \(k< 10\) and \(k\geq 10\).
    
    \vspace{0.3cm}

    \noindent\textbf{Case \(k< 10\):}
    
    \vspace{0.3cm}

    Take
    \begin{equation*}
        b_{\rho,k}^{s}(i)
        =
        \begin{cases}
        \frac{\rho \cdot \mathds{1}_s}{|\rho \cdot \mathds{1}_s|} 2^{-k} & \text{ if } \dist(i,s)\leq 1,
        \\
        0 & \text{ else},
        \end{cases}
    \end{equation*}where \(\rho \cdot \mathds{1}_s = \sum_{i\in s} \rho_i\) is the charge of \(\rho\) in \(s\) (which is non-zero by the properties of \(\Ncal\) given in Theorem~\ref{thm:convex_combination_ensembles}, cf. the argument in~\cite[Proof of Proposition 26]{W19}). Properties 1. and 2. are direct from the definition. Property 3. is trivial, as by the same argument as in~\cite[Proof of Proposition 4.7]{KP17} there is no $\rho'\in \Ncal_{\lesssim}(\rho)$ such that $\DensSquareEnv^+(\rho')$ intersects $\supp b_{\rho,k}^s$.
    For the same reason, regarding Property 4. only \eqref{eq:spin_wave_square_awayfromclust} is relevant, and this is again obvious from the definition. Finally, Property 5. follows from $k<10$ and 
    \begin{equation*}
        \rho \cdot b_{\rho,k}^{s} = 2^{-k} |\rho \cdot \mathds{1}_s| \geq 2^{-k}.
    \end{equation*}
    
    \vspace{0.3cm}

    \noindent\textbf{Case \(k\geq 10\):}
    
    \vspace{0.3cm}

    We take the function defined in~\cite[Display (4.26)]{KP17} with \(\gamma = \frac14\). From their definition, one immediately get Properties 1., 2. and 3. For Property 4., note that \eqref{eq:spin_wave_square_awayfromclust} is shown in \cite[Display above (4.30)]{KP17}, while \eqref{eq:spin_wave_square_atclust} follows from~\cite[First display on p. 28]{KP17} upon observing that $|x_E|\ge 2^{k-1}$.
    Finally, point 5. follows from~\cite[Displays (4.27), (4.28)]{KP17} and the fact that $\frac{\log(6/5)}{4}\approx 0.045$ is larger than $2^{-9}$.
\end{proof}

\begin{proof}[Proof of Lemma~\ref{lem:spin_waves}]
    Let \(\Lambda\subset \Z^2\) be connected. Fix some charge ensemble \(\Ncal\), and \(\rho\in \Ncal\) as given by Theorem~\ref{thm:convex_combination_ensembles}. Let \(b_{\rho,0}\), \(b_{\rho,k}^s\) be the functions given by Lemmas~\ref{lem:spin_wave_level_0}, and~\ref{lem:spin_wave_square} respectively.   
    
    We define \(a_{\Ncal,\rho}\) using
    \begin{equation}\label{eq:def_arho}
        a_{\Ncal,\rho} = \gamma_\beta b_{\rho,0} + \sum_{k= 1}^{\scale_{\Lambda}(\rho)} \sum_{s\in \SquareCover_k^{\sep}(\rho) } \gamma_\beta b_{\rho,k}^s.
    \end{equation}
    Property~\ref{spinWave:support} and~\ref{spinWave:orthogonality} follow from the definition and Lemmas~\ref{lem:spin_wave_level_0}, and~\ref{lem:spin_wave_square}. Property~\ref{spinWave:gradient} follows by noting that for any edge there is at most one function in \eqref{eq:def_arho} whose gradient is non-zero there (see~\cite[After Display (4.35)]{KP17}). Then, the wanted claim follow from Lemmas~\ref{lem:spin_wave_level_0}, and~\ref{lem:spin_wave_square}. 
    
    To establish Property~\ref{spinWave:energy} we use again that in \eqref{eq:def_arho} for any edge there is at most one function in \eqref{eq:def_arho} whose gradient is non-zero. Setting \(a_{\rho}\equiv a_{\Ncal,\rho}\), we obtain
    \begin{equation}\label{eq:decompEbeta}
    \begin{aligned}
        E_{\beta}(a_{\rho},\rho)
        &=
        \gamma_\beta\rho\cdot b_{\rho,0}+\sum_{k= 1}^{\scale_{\Lambda}(\rho)} \sum_{s\in \SquareCover_k^{\sep}(\rho) } \gamma_\beta \rho\cdot b_{\rho,k}^s-(2\Cr{lnIratio_deriv}+1)c_{\beta}\sum_{i\sim j}g(a_{\rho}(i)-a_{\rho}(j))
        \\
        &=
        \gamma_\beta\rho\cdot b_{\rho,0}+\sum_{k= 1}^{\scale_{\Lambda}(\rho)} \sum_{s\in \SquareCover_k^{\sep}(\rho) } \gamma_\beta \rho\cdot b_{\rho,k}^s
        \\
        &\qquad-(2\Cr{lnIratio_deriv}+1)c_{\beta}\sum_{i\sim j}g(\gamma_\beta b_{\rho,0}(i)-\gamma_\beta b_{\rho,0}(j))
        \\
        &\qquad-(2\Cr{lnIratio_deriv}+1)c_{\beta}\sum_{k= 1}^{\scale_{\Lambda}(\rho)} \sum_{s\in \SquareCover_k^{\sep}(\rho) }\sum_{i\sim j}g(\gamma_\beta b_{\rho,k}^s(i)-\gamma_\beta b_{\rho,k}^s(j)).
    \end{aligned}
    \end{equation}
    Lemmas~\ref{lem:spin_wave_level_0} and~\ref{lem:spin_wave_square} together with Lemma \ref{lem:control_A_with_Ssep} imply directly the bound
    \begin{equation}\label{eq:decompEbeta_1stterm}
        \gamma_\beta\rho\cdot b_{\rho,0}+\sum_{k= 1}^{\scale_{\Lambda}(\rho)} \sum_{s\in \SquareCover_k^{\sep}(\rho) } \gamma_\beta \rho\cdot b_{\rho,k}^s\ge \gamma_\beta\Bigg(\frac{\|\rho\|_1}{2}+\sum_{k= 1}^{\scale_{\Lambda}(\rho)} \sum_{s\in \SquareCover_k^{\sep}(\rho) }\frac{1}{2^9}\Bigg) \ge  \gamma_\beta\Bigg (\frac{\|\rho\|_1}{2}   +\frac{A_\Lambda(\rho)}{2^9\Cr{A_Lambda}}\Bigg)
    \end{equation}
    which controls the first term on the right-hand side of \eqref{eq:decompEbeta}. 

    For the second term, we use Properties 1. and 2. of Lemma~\ref{lem:spin_wave_level_0} to estimate
    \begin{equation}\label{eq:decompEbeta_level0}
    \sum_{i\sim j}g(\gamma_\beta b_{\rho,0}(i)-\gamma_\beta b_{\rho,0}(j))\le 4|\supp \rho|g(\gamma_\beta)\le4\|\rho\|_1 g(\gamma_\beta)
    \end{equation}
    It remains to estimate the third term. For that purpose fix some $k\ge1$ and $s\in \SquareCover_k^{\sep}(\rho)$. Let $\Ecal$ be set of connected components of $\bigcup_{\rho'\in \Ncal_{\lesssim}(\rho)}\DensSquareEnv^+(\rho')$. Using Property 3. of Lemma~\ref{lem:spin_wave_square} we see that
    \begin{equation}\label{eq:decompEbeta_square}
    \begin{aligned}
    &\sum_{i\sim j}g(\gamma_\beta b_{\rho,k}^s(i)-\gamma_\beta b_{\rho,k}^s(j))\\
    &\quad\le \sum_{\substack{i\sim j\\i,j\notin\bigcup_{E\in\Ecal}E}}g(\gamma_\beta b_{\rho,k}^s(i)-\gamma_\beta b_{\rho,k}^s(j))+\sum_{E\in\Ecal}\sum_{\substack{i\sim j\\i\in E,j\notin E}}g(\gamma_\beta b_{\rho,k}^s(i)-\gamma_\beta b_{\rho,k}^s(j))\\
    &\quad\le 2\cdot 2^{2(k+1)}g(\gamma_\beta 2^{-k})+\sum_{\substack{E\in\Ecal\\\dist(E,s)\le 2^{k-1}}}|\partial^{\text{ext}}E|g(\gamma_\beta d(E)2^{-k})\\
    &\quad\le 2\cdot 2^{2(k+1)}\cdot\frac{1}{2^{2k}}\Cr{g_quadr}g(\gamma_\beta)+\sum_{\substack{E\in\Ecal\\\dist(E,s)\le 2^{k-1}}}64\frac{d(E)^3}{2^{2k}}\Cr{g_quadr}g(\gamma_\beta)
    \end{aligned}
    \end{equation}
    where we used \eqref{eq:g_quadr} and that $|\partial^{\text{ext}}E|\le 64d(E)$ (as follows from \cite[Lemma 4.6, 4.]{KP17}). 

    By the same argument as for \cite[Display above (4.33)]{KP17}), any two $E,E'\in \Ecal$ with $2^\ell\le d(E),d(E')\le 2^{\ell+1}$ satisfy $\dist(E,E')\ge c2^{\alpha \ell}$, and therefore
    \[\sum_{\substack{E\in\Ecal\\\dist(E,s)\le 2^{k-1}}}d(E)^3\le C\sum_{\ell=0}^\infty \frac{2^{2(k+1)}}{2^{2\alpha \ell}}2^{3(\ell+1)}\le C2^{2k}\sum_{\ell=0}^\infty2^{2-3\alpha)\ell}\le C2^{2k}\]
    Inserting this into \eqref{eq:decompEbeta_square} we obtain that 
    \begin{equation}\label{eq:decompEbeta_square2}
    \sum_{i\sim j}g(\gamma_\beta b_{\rho,k}^s(i)-\gamma_\beta b_{\rho,k}^s(j))\le \Cr{g_quadr}\Cl{spin}g(\gamma_\beta)
    \end{equation}
    for some universal constant $\Cr{spin}$.

    We can now insert the estimates \eqref{eq:decompEbeta_1stterm}, \eqref{eq:decompEbeta_level0} and \eqref{eq:decompEbeta_square2} into \eqref{eq:decompEbeta} to obtain that
    \begin{equation}\label{eq:decompEbeta2}
    \begin{aligned}
        E_{\beta}(a_{\rho},\rho)&\ge \gamma_\beta\Bigg (\frac{\|\rho\|_1}{2}   +\frac{A_\Lambda(\rho)}{2^9\Cr{A_Lambda}}\Bigg)-(2\Cr{lnIratio_deriv}+1)c_{\beta}\Big(4\|\rho\|_1 g(\gamma_\beta)-\Cr{g_quadr}\Cr{spin}g(\gamma_\beta)\Big)\\
        &=\|\rho\|_1\Big(\frac{\gamma_\beta}{2}-4(2\Cr{lnIratio_deriv}+1)c_{\beta}g(\gamma_\beta)\Big)+A_\Lambda(\rho)\Big(\frac{\gamma_\beta}{2^9\Cr{A_Lambda}}-(2\Cr{lnIratio_deriv}+1)c_{\beta}\Cr{g_quadr}\Cr{spin}g(\gamma_\beta)\Big)
    \end{aligned}
    \end{equation}
    Now define the constant $
    \Cr{gamma}=\left(\max\Big(16(2\Cr{lnIratio_deriv}+1),2^{10}\Cr{spin}\Cr{A_Lambda}(2\Cr{lnIratio_deriv}+1)\Big)\right)^{-1}$.
    Because $\Cr{g_quadr}\ge1$, if $\frac{c_{\beta}g(\gamma_\beta)}{\gamma_\beta}\le \frac{\Cr{gamma}}{\Cr{g_quadr}}$, the right-hand side of \eqref{eq:decompEbeta2} is bounded below by
    $\frac{\gamma_\beta\|\rho\|_1}{4}+\frac{\gamma_\beta A_\Lambda(\rho)}{2^{10}\Cr{A_Lambda}}$, and so Property~\ref{spinWave:energy} follows with 
    $\Cr{energy}=\min\Big(\frac14,\frac{1}{2^{10}\Cr{A_Lambda}}\Big)$.

The argument for Property~\ref{spinWave:scalar_prod} is very similar, and actually easier, as instead of $g(\nabla a_{\Ncal,\rho})$ we only need to control $\|\nabla a_{\Ncal,\rho}\|_2^2$. Alternatively, one can use estimates from \cite{KP17}. In fact, the combination of \cite[(4.30) and (4.34)]{KP17} directly yields
    \[
        \norm{\nabla a_{\Ncal,\rho}}_2^2\leq
        C\gamma_{\beta}^2 \Big(|\supp \rho| + \sum_{k= 1}^{\scale_{\Lambda}(\rho)} |\SquareCover_k^{\sep}(\rho)| \Big)
        \leq
        C \gamma_{\beta}^2 \Big(|\supp \rho|+A_{\Lambda}(\rho)\Big)
    \] for some constant \(C\) (that becomes $\Cr{norm_gradSpinWave}$).
\end{proof}

\section{Complex translations}

We prove a simple result of multivariate complex analysis which is used to performed suitable change of integration contours in \(\bbC^n\).

For \(F:D\to \bbC\), \(D\subset \bbC\), \(z\in D\), \(i\in \{1,\dots,n\}\), define \(F_{i}^z\) by
\begin{equation*}
    F_{i}^z(w) = F(z_1,\dots ,z_{i-1},w,z_{i+1}, \dots, z_n).
\end{equation*}
\begin{lemma}
    \label{lem:complex_translations}
    Let \(n\geq 1\). For \(1\leq i,j\leq n\) with \(i\neq j\), let \(\epsilon_{ij}= \epsilon_{ji}\in(0,+\infty]\).
    Define
    \begin{equation*}
        D_{\epsilon} = \{z\in \bbC^n:\ |\Im(z_i-z_j)|< \epsilon_{ij} \ \forall i\neq j\}
        .
    \end{equation*}
    Let \(F:D_{\epsilon}\to \bbC\) be an analytic function. Suppose it is real and integrable on \(\R^n\), with \(|F_{i}^z(x+iy)|\xrightarrow{|x|\to\infty} 0\) uniformly over \(y\) in any fixed compact set, and let \(a\in \R^n\) be such that \(|a_i-a_j|\leq \epsilon_{ij}\) for all \(i\neq j\). Then,
    \begin{equation*}
        \int_{\R^n}F(x)dx = \int_{\R^n}F(x+\rmi a)dx.
    \end{equation*}
\end{lemma}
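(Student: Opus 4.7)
The plan is to interpolate between the two contours $\R^n$ and $\R^n + i a$ via a one-parameter family, and then to show by integration by parts that this family has constant integral. First I would observe that the domain $D_\epsilon$ depends only on differences of imaginary parts: if $a \in \R^n$ satisfies $|a_i - a_j| \le \epsilon_{ij}$ for all $i \neq j$, then for every $t \in [0,1)$ one has $|t a_i - t a_j| = t|a_i - a_j| < \epsilon_{ij}$, so the translated contour $\R^n + ita$ lies strictly inside $D_\epsilon$ and $F$ is analytic there. I would therefore set
\[
\phi(t) = \int_{\R^n} F(x + ita) \, dx, \quad t \in [0, 1),
\]
and aim to show that $\phi$ is constant.

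The second step is to differentiate under the integral sign to obtain
\[
\phi'(t) = i \int_{\R^n} \sum_{k=1}^n a_k \, (\partial_{z_k} F)(x + ita) \, dx,
\]
justified by dominated convergence using Cauchy's integral formula on small polydiscs inside $D_\epsilon$ (which turns the pointwise decay of $F$ into an integrable bound on its complex partial derivatives on a nearby shifted contour). By holomorphy, for each fixed $t$ the complex partial derivative $\partial_{z_k} F$ evaluated along the affine subspace $\R^n + ita$ coincides with the ordinary $x_k$-partial derivative of $x \mapsto F(x+ita)$. An integration by parts in each coordinate then reduces $\phi'(t)$ to a sum of boundary terms of the form $[F(x + ita)]_{x_k = -\infty}^{+\infty}$, all of which vanish thanks to the uniform-on-compact-sets decay of $F$ as $|x_k| \to \infty$. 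Hence $\phi'(t) \equiv 0$ on $[0,1)$, so $\phi(0) = \lim_{t \uparrow 1} \phi(t)$.

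Finally I would handle the endpoint $t = 1$. If the inequalities $|a_i - a_j| \le \epsilon_{ij}$ are all strict, the argument already extends to a slightly larger open interval of $t$ and gives $\phi(1) = \phi(0)$ immediately. Otherwise, I would approximate $a$ by $a^{(\delta)} = (1 - \delta) a$, apply the strict-inequality case to obtain $\int_{\R^n} F(x)\,dx = \int_{\R^n} F(x + i a^{(\delta)})\,dx$, and pass to the limit $\delta \downarrow 0$ using the uniform decay of $F$ to dominate the integrands. The main obstacle is the pair of analytic technicalities: the interchange of derivative and integral, and the vanishing of boundary terms at infinity. Both are standard but a bit delicate; in particular, Cauchy's integral formula is needed to transfer the decay hypothesis on $F$ into sufficient control on $\nabla_z F$, and a careful choice of dominating function is required for the final passage to the boundary.
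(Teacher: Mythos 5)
Your interpolation idea, $\phi(t)=\int_{\R^n}F(x+\rmi t a)\,dx$, is attractive because along the segment $t\mapsto ta$ one has $t|a_i-a_j|<\epsilon_{ij}$ for $t<1$, so the contour automatically stays in $D_\epsilon$; this sidesteps what is actually the bulk of the paper's proof. However, the analytic justification has a genuine gap: the hypotheses give integrability of $F$ only on the real subspace $\R^n$, and decay only \emph{one coordinate at a time} (for each fixed value of the remaining coordinates, $|F_i^z(x+\rmi y)|\to0$ as $|x|\to\infty$, uniformly in $y$ on compacts); nothing is assumed uniform in, or integrable over, the other real coordinates, nor on the shifted contours $\R^n+\rmi ta$. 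Consequently (i) it is not even clear that $\phi(t)$ is finite for $t\in(0,1)$; (ii) your differentiation under the integral sign needs an $x$-integrable majorant for $\sum_k a_k\partial_{z_k}F(x+\rmi ta)$, and the Cauchy-estimate step only bounds $|\partial_{z_k}F(x+\rmi ta)|$ by a local supremum of $|F|$ on a nearby polydisc --- integrability of that supremum over $\R^n$ does not follow from integrability of $F$ on $\R^n$ plus per-coordinate decay; (iii) the coordinatewise integration by parts requires a Fubini step with the same missing domination; and (iv) the final limit $\delta\downarrow0$ again appeals to ``uniform decay of $F$ to dominate the integrands'', which is not among the hypotheses. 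So several key steps are asserted rather than available under the stated assumptions.

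The paper's proof is arranged precisely so that only the stated hypotheses enter: it shifts \emph{one coordinate at a time}, applying Cauchy's theorem on the rectangle $[-R,R]\times[y_i,y_i+t]$, where the single-variable decay uniform over compact imaginary parts is exactly what makes the vertical sides vanish; the substantive remaining work is combinatorial, namely decomposing the translation by $\rmi a$ into a sequence of single-coordinate shifts (constant shifts in steps smaller than $\epsilon_*$, then successively lowering the coordinates at the maximal value) so that the contour never leaves $D_\epsilon$. To rescue your route you would need to strengthen the hypotheses (e.g.\ joint decay or integrability of $F$ on all intermediate contours $\R^n+\rmi b$, locally uniformly in $b$), which is more than the lemma assumes and more than is verified when it is applied; alternatively, run your homotopy coordinate by coordinate, at which point you essentially recover the paper's argument, including its ordering scheme.
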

\begin{proof}
    We will prove the result by successive applications of single coordinate shifts. We denote \(z_i= x_i+\rmi y_i\). Let \(i\in \{1,\dots, n\}\), \(z\in D_{\epsilon}\) and \(\delta>0\) such that \((z_1,\dots,z_{i-1}, z_i + it, z_{i+1},\dots z_n)\in D_{\epsilon}\) for all \(t\in(-\delta,\delta)\). One then has that for any \(t\in (-\delta,\delta)\),
    \begin{equation*}
        \int_{-\infty}^{\infty}dx_i F_i^z(x_i+\rmi y_i) = \int_{-\infty}^{\infty}dx_i F_i^z\big(x_i+\rmi (y_i+t)\big).
    \end{equation*}Indeed, by our hypotheses, \(w\mapsto F_i^z(w+\rmi y_i)\) is analytic on \(\{|\Im(w)|< \delta\}\). So,
    \begin{multline*}
        \int_{-R}^{R} dw F_i^z(w+\rmi y_i + \rmi t) 
        \\
        =
        \int_{-R}^R dw F_i^z(w+\rmi y_i) + \int_{0}^t ds F_i^z(R+\rmi y_i+\rmi s) + \int_{t}^0 ds F_i^z(-R+\rmi y_i+\rmi s).
    \end{multline*}Letting \(R\to\infty\) and using the decay of \(F_i^z(w)\) for \(|\Re(w)|\) large, one gets the wanted identity. We now need to show that translation by \(\rmi a\) can be realized via a sequence of translations of single coordinates as we just described.
    Let
    \begin{equation*}
        \epsilon_* =
        \min\{\epsilon_{ij}:\ 1\leq i< j\leq n \}
        >0.
    \end{equation*}Define \(\delta_i: \{1,\dots, n\} \to \R\) by \(\delta_i(i) = 1\) and \(\delta_i(j) = 0\) if \(j\neq i\). Denote by \(\sfone\) the constant \(1\) function.
    First note that if \(a \equiv \alpha \sfone\) is constant, one can write
    \begin{equation*}
        a = \sum_{k=1}^{\lceil \frac{\alpha}{\epsilon_*}\rceil+1} \alpha_k \sfone,
    \end{equation*}with \(|\alpha_k| < \epsilon_*\). In particular, one can shift by \(a\) by doing \(n(\lceil \frac{\alpha}{\epsilon}\rceil+1)\) single coordinate shifts without leaving \(D_{\epsilon}\). Let us finally show that one can obtain general \(a\)'s as a constant function plus a sequence of single-coordinate shifts without leaving \(D_{\epsilon}\). We construct the shifts in reverse order: we go from \(a\) to a constant function. At each step, shift the set of coordinates with maximal value to the second maximal value by doing single coordinate shifts as in the constant case. This process does not leave \(D_{\epsilon}\) as absolute gradients between coordinates that were not at the maximal value and coordinates that were is only decreasing, and the gradients between coordinates that were at the maximal value are controlled as in the constant case. One such step increase by at least one the number of coordinates having maximal value, and this process stops when the function becomes constant equal to its minimum.
\end{proof}

\section{Analytic extension}
\label{app:analytic_ext}
\begin{proof}[Proof of Proposition \ref{p:heightfunctextension} for the $p$-SOS-model]
We need to find a holomorphic function $I_\beta(z)$ such that $I_\beta(n)=\exp(-\beta|n|^p)$ for $n\in\Z$ and which has all the properties listed in Section \ref{subsec:models}. We assume throughout that $\beta\le1$, say.

We first find a nice holomorphic function $f(z)$ close to $\beta|z|^p$. We let 
\[f(z)=\beta^\alpha\left(1+\beta^{2(1-\alpha)/p}z^2\right)^{p/2}\]
where $0<\alpha<1$ will be fixed later.
We note that the function $f$ is holomorphic on
$\left\{z\in \bbC:\ |\Im(z)| <\frac{1}{\beta^{(1-\alpha)/p}}\right\}$
and that
\begin{align*}
f'(z)&=\frac{p\beta^{\alpha+2(1-\alpha)/p}z}{(1+\beta^{2(1-\alpha)/p}z^2)^{1-p/2}}\\
f''(z)&=\frac{p\beta^{\alpha+2(1-\alpha)/p}+(p-1)p\beta^{\alpha+4(1-\alpha)/p}z^2}{(1+\beta^{2(1-\alpha)/p}z^2)^{2-p/2}}
\end{align*}
If $|\Im(z)| <\frac{1}{2\beta^{(1-\alpha)/p}}$, the denominators are bounded away from 0 by a constant (uniformly in $p$). Moreover the denominator in the expression for $f''$ grows at least at fast as the numerator as $|\Re(z)|\to\infty$ (here we use that $p\le2$), and so 
\begin{equation}\label{e:estf''}
|f''(z)|\le C\beta^{\alpha+2(1-\alpha)/p}
\end{equation}
for $|\Im(z)| <\frac{1}{2\beta^{(1-\alpha)/p}}$.

Next, define $s_n=\beta|n|^p-f(n)$ for $n\in\Z$. Using that $(1+x)^{p/2}=1+O(x)$ for $x\in\R$ with $x\ge0$ (as follows from Taylor's theorem and the fact that the derivative of $(1+x)^{p/2}$ is uniformly bounded for $x\ge0$ and $p\le2$), we have for $n\in\Z\setminus\{0\}$ that
\[|s_n|=\left|\beta|n|^p-\beta|n|^p\left(1+\frac{1}{\beta^{2(1-\alpha)/p}n^2}\right)^{p/2}\right|\le C\beta|n|^p\frac{1}{\beta^{2(1-\alpha)/p}n^2}\le C\frac{\beta^{1-2(1-\alpha)/p}}{|n|^{2-p}}\]
We also have $s_0=-\beta^{\alpha}$, and hence we can combine the bounds into
\[|s_n|\le C\frac{\beta^{\alpha\wedge(1-2(1-\alpha)/p)}}{1+|n|^{2-p}}\quad\forall n\in\Z\]
for $\beta\le1$.

We now define
\begin{equation}\label{e:defI_betaSOS}
I_\beta(z)=\exp\left(-f(z)+\sum_{m\in\Z}s_m\sinc^2(\pi(z-m))\right)
\end{equation}
for $|\Im(z)| <\frac{1}{2\beta^{(1-\alpha)/p}}$, where $\sinc(z)=\frac{\sin(z)}{z}$ (and $\sinc(0)=1$) and claim that this function has all required properties. 

First of all, we note the bounds
\begin{align}
|\sinc(z)^2|&\le C\frac{e^{2\Im(z)}}{1+\Re(z)^2}\label{e:sinc}\\
\left|(\sinc^2)''(z)\right|&\le C\frac{e^{2\Im(z)}}{1+\Re(z)^2}\label{e:sinc''}
\end{align}
The estimate \eqref{e:sinc} ensures that the series in \eqref{e:defI_betaSOS} converges locally uniformly in the strip $\left\{z\in \bbC:\ |\Im(z)| <\frac{1}{2\beta^{(1-\alpha)/p}}\right\}$. This implies that $I_\beta$ is holomorphic in that strip. It is clear that $I_{\beta}$ is even, and real positive on the real line. Moreover, since $\sinc(\pi(n-m))=\delta_{mn}$ for $m,n\in\Z$, one has that $I_\beta(n)=\exp(-\beta|n|^p)$ for $n\in\Z$.

The Assumptions~\ref{item:Ibeta:imaginary_growth}, \ref{item:Ibeta:derivatives} and \ref{item:Ibeta:sub_quad} in Subsection~\ref{subsec:models} all follow from Taylor's theorem and \eqref{e:estf''} and \eqref{e:sinc''}. In detail, we have that
\[\log I_\beta(x+\rmi a)=\log I_\beta(x)+\rmi a(\log I_\beta)'(x)-\frac{a^2}{2}(\log I_\beta)''(\tilde a)\]
for some $\tilde a\in[0,a]$. As $(\log I_\beta)'(x)$ is real, \eqref{e:estf''} and \eqref{e:sinc''} imply that
\begin{align*}
   &\left|\Re\left(\log I_\beta(x+\rmi a)-\log I_\beta(x)\right)\right|\\
   &\le Ca^2\sup_{\tilde a\in[0,a]}\Big(f''(x+\rmi\tilde a)+\sum_{m\in\Z}|s_m|\pi^2|(\sinc^2)''(\pi(x+\rmi\tilde a-m))|\Big)\\
   &\le Ca^2\Big(\beta^{\alpha+2(1-\alpha)/p}+\beta^{\alpha\wedge(1-2(1-\alpha)/p)}e^{2\pi|a|}\sum_{m\in\Z}\frac{1}{1+(x-m)^2}\Big)\\
   &\le Ca^2\big(\beta^{\alpha+2(1-\alpha)/p}+\beta^{\alpha\wedge(1-2(1-\alpha)/p)}e^{2\pi|a|}\big)
\end{align*}
If we choose $\alpha=1-\frac{p}{3}$, then $\alpha+2(1-\alpha)/p\ge\alpha\ge\frac13$ and $\alpha\wedge(1-2(1-\alpha)/p)=\frac13$, and hence Assumption~\ref{item:Ibeta:imaginary_growth} follows with $g(a)=a^2(1+e^{2\pi|a|})$, $c_\beta=C\beta^{1/3}$, and $\epsilon_\beta=\frac{1}{2\beta^{1/3}}$ (recall that $\beta\le1$).

Similarly, we find that
\begin{align*}
   &\left|\frac{\dd}{\dd x}\log I_\beta(x+\rmi a)-\frac{\dd}{\dd x}\log I_\beta(x)\right|\\
   &\le C|a|\sup_{\tilde a\in[0,a]}\Big(|f''(x+\rmi\tilde a)|+\sum_{m\in\Z}|s_m|\pi^2|(\sinc^2)''(\pi(x+\rmi\tilde a-m))|\Big)\\
   &\le C|a|\big(\beta^{\alpha+2(1-\alpha)/p}+\beta^{\alpha\wedge(1-2(1-\alpha)/p)}e^{2\pi|a|}\big)
\end{align*}
and
\begin{align*}
   &\left|\frac{\dd^2}{\dd x^2}\log I_\beta(x+\rmi a)-\frac{\dd^2}{\dd x^2}\log I_\beta(x)\right|\\
   &\le C\sup_{\tilde a\in[0,a]}\Big(|f''(x+\rmi\tilde a)|+\sum_{m\in\Z}|s_m|\pi^2|(\sinc^2)''(\pi(x+\rmi\tilde a-m))|\Big)\\
   &\le C\left(\beta^{\alpha+2(1-\alpha)/p}+\beta^{\alpha\wedge(1-2(1-\alpha)/p)}e^{2\pi|a|}\right)
\end{align*}
which imply Assumption~\ref{item:Ibeta:derivatives} with $c_\beta'=C\beta^{1/3}$.

Finally, we estimate that
\begin{align*}
   &\left|\frac{\dd^2}{\dd x^2}\log I_\beta(x)\right|\\
   &\le |f''(x)|+\sum_{m\in\Z}|s_m|\pi^2|(\sinc^2)''(\pi(x-m))|\\
   &\le C\left(\beta^{\alpha+2(1-\alpha)/p}+\beta^{\alpha\wedge(1-2(1-\alpha)/p)}\right)
\end{align*}
which implies Assumption~\ref{item:Ibeta:sub_quad} with $c_\beta'=C\beta^{1/3}$.

\end{proof}

\subsubsection*{Acknowledgements}
FS was supported by the NCCR SwissMAP, the Swiss FNS, and the Simons collaboration on localization of waves.

\end{document}